\definecolor{cclr}{rgb}{25,25,112}
\newtheorem{lem}{Lemma}
\newtheorem{cor}{Corollary}
\newtheorem{prop}{Proposition}
\newtheorem{thm}{Theorem}
\newtheorem{exer}{Exercise}
\newtheorem{eg}{Example}
\newtheorem{dfn}{Definition}
\newtheorem{rmk}{Remark}
\def\presuper#1#2%
\newcommand{\Chtop}{{\operatorname{Ch}_{\operatorname{top}}}}
\newcommand{\Remain}{\operatorname{res}}
\newcommand{\RSw}{\operatorname{rsw}}
\newcommand{\CSw}{\operatorname{csw}}
\newcommand{\Tr}{\operatorname{tr}}
\newcommand{\Ell}{\operatorname{ell}}
\newcommand{\gr}{\operatorname{gr}}
\newcommand{\wt}[1]{\widetilde{#1}}
\newcommand{\Lie}{\operatorname{Lie}}
\newcommand{\Span}{\operatorname{span}}
\newcommand{\GSp}{\operatorname{GSp}}
\newcommand{\GO}{\operatorname{GSO}}
\newcommand{\Or}{\operatorname{O}}
\newcommand{\ad}{\operatorname{ad}}
\newcommand{\Gal}{\operatorname{Gal}}
\newcommand{\Gm}{\operatorname{G}_m}
\newcommand{\red}{{\operatorname{red}}}
\newcommand{\Aut}{\operatorname{Aut}}
\newcommand{\Res}{\operatorname{Res}}
\newcommand{\Dyn}{{\operatorname{Dyn}}}
\newcommand{\GSO}{{\operatorname{GSO}}}
\newcommand{\Ext}{{\operatorname{Ext}}}
\newcommand{\GR}{\operatorname{Gr}}
\newcommand{\Sp}{\operatorname{Sp}}
\newcommand{\Mat}{\operatorname{Mat}}
\newcommand{\GL}{\operatorname{GL}}
\newcommand{\SO}{\operatorname{SO}}
\newcommand{\Img}{\operatorname{Im}}
\newcommand{\cyc}{{\operatorname{cyc}}}
\newcommand{\bS}{\breve{S}}
\newcommand{\bG}{\breve{G}}
\newcommand{\bT}{\breve{T}}
\newcommand{\brB}{\breve{B}}
\newcommand{\bP}{\breve{P}}
\newcommand{\bM}{\breve{M}}
\newcommand{\Hom}{\operatorname{Hom}}
\newcommand{\spec}{\operatorname{Spec}}
\titleformat{\section}[runin]{\normalfont\bfseries}{\thesection.}{3pt}{}
\titleformat{\subsection}[runin]{\normalfont\bfseries}{\thesubsection.}{3pt}{}
\titleformat{\subsubsection}[runin]{\normalfont\bfseries}{\thesubsubsection.}{3pt}{}
\titleformat{\paragraph}[runin]{\normalfont\bfseries}{}{3pt}{}
\renewcommand{\thesection}{\arabic{section}}
\titleformat{\section}{\normalfont\large\bfseries}{\thesection.~~}{1em}{}
\newcommand{\cX}{\mathcal{X}}
\newcommand{\cY}{\mathcal{Y}}
\newcommand{\bFp}{\bar{\mathbb{F}}_p}
\newcommand{\bQ}{\mathbb{Q}}
\newcommand{\bZ}{\mathbb{Z}}
\newcommand{\lsup}[2]{{^{#1}\!#2}}
\begin{document}

\title{The topological Breuil-M\'ezard conjecture
for classical groups}
\author{Zhongyipan Lin}

\begin{abstract}
For unitary, orthogonal
and symplectic groups,
we compute the dimension
of the reduced Emerton-Gee stacks,
and give an explicit description of
their top-dimensional Chow group.
Our results are unconditional when $p\neq 2$.

The main innovation is a new method
for analyzing the vanishing locus of
Galois cup products
valued in high dimensions.
\end{abstract}

\maketitle

\tableofcontents

Let $G$ be a connected reductive group over a $p$-adic
field $F$, $p\neq 2$.
In this paper, we prove that
\begin{thm}[Corollary \ref{cor:main}]
If $\lsup LG$ is one of
\begin{itemize}
\item the $L$-dual of unitary groups $\lsup LU_n$,
\item (the neutral component of) the
orthogonal similitude groups $\GSO_n$, or
\item the symplectic similitude groups $\GSp_{n}$,
\end{itemize}
then
$$
\dim \cX_{\lsup LG} = [F:\bQ_p]\dim \bG/\brB
$$
where $\lsup LG=\bG \rtimes \Gal(K/F)$
is the L-group of $G$, $\brB$ is a Borel of $\bG$,
and $\cX_{\lsup LG}$
is the {\it reduced} Emerton-Gee stack
over $\bFp$.
Here $K/F$ is the quadratic splitting field of $G$
if $G$ is unitary, and $K=F$ if otherwise.
\end{thm}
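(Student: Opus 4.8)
The plan is to bound $\dim\cX_{\lsup LG}$ above and below by $[F:\bQ_p]\dim\bG/\brB$. The upper bound should follow from the general theory once the $\bG$-valued Emerton--Gee stack (the moduli of $\lsup LG$-valued \'etale $\varphi$-modules) is available: stratifying by Harder--Narasimhan/Newton type and applying the local Euler characteristic formula gives, just as for $\GL_n$, that each irreducible component of the reduced stack has dimension at most $[F:\bQ_p]\,|\Phi^+|=[F:\bQ_p]\dim\bG/\brB$. (The hypothesis $p\neq2$ is used throughout, to keep $\bG$ smooth and connected, to have clean Tate local duality, and to avoid the characteristic-$2$ pathologies of orthogonal and spin groups.) So the content lies in the \emph{lower} bound --- producing one irreducible component of dimension exactly $[F:\bQ_p]\dim\bG/\brB$ --- and I expect this to be organised around an axiomatic reduction (what I would guess are the paper's Property EPL and Property SSD) of the dimension and top Chow group of $\cX_{\lsup LG}$ to a representation-theoretic count, the hard part of which is a lifting/genericity statement.

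Concretely, I would study representations that are \emph{maximally non-split of Borel type}. Fix a sufficiently generic continuous $\bar\psi\colon\Gamma_F\to\brB(\bFp)$ lifting the prescribed map to $\Gal(K/F)$ and with generic semisimplification, where $\brB=\bT\ltimes\breve N$ and the graded pieces of the central series of $\breve N$ decompose into root spaces $\breve{\mathfrak g}_\gamma$ on which $\bar\psi$ acts through a character $\psi_\gamma$. Let $\cX_{\brB,\bar\psi}\subset\cX_{\lsup LG}$ be the closure of the locus of representations admitting a $\brB$-filtration reducing to that of $\bar\psi$; it receives a surjection from (a space built from) the $\brB$-valued deformations of $\bar\psi$, with fibres generically finite modulo conjugation by $\brB$. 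Assembling that deformation space by a tower of $H^1$'s and invoking the local Euler characteristic formula and Tate duality, its expected dimension, after removing the $\brB$-action, is
\[
\sum_{\gamma\in\Phi^+}\dim_{\bFp}H^1\bigl(\Gamma_F,\bFp(\psi_\gamma)\bigr)-\dim\brB \;=\; [F:\bQ_p]\,|\Phi^+| \;=\; [F:\bQ_p]\dim\bG/\brB ,
\]
with the $H^0$- and $H^2$-corrections killed by genericity. Hence the lower bound reduces to showing that this tower of extension problems is \emph{unobstructed on a dense open locus}, i.e. that $\cX_{\brB,\bar\psi}$ attains its expected dimension.

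The crux --- and the step I expect to be the main obstacle --- is the vanishing of the obstruction classes in the tower. Passing one step further up the central series of $\breve N$ has obstruction in an $H^2$ group which is, morally, a sum of cup products of the $H^1$-classes already chosen, composed with a Lie bracket; writing
\[
\operatorname{ob}\colon V:=\bigoplus_{\gamma\in\Phi^+}H^1\bigl(\Gamma_F,\bFp(\psi_\gamma)\bigr)\ \longrightarrow\ W:=\bigoplus_{\gamma\in\Phi^+}H^2\bigl(\Gamma_F,\bFp(\psi_\gamma)\bigr)
\]
for the resulting quadratic ``obstruction map,'' attaining expected dimension amounts to $\operatorname{ob}^{-1}(0)$ having codimension $\dim_{\bFp}W$, i.e. to $\operatorname{ob}$ being as surjective as possible. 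For $\GL_n$, Emerton--Gee prove exactly this by an induction on root-string length that exploits the type-$A$ combinatorics; but for the other classical groups the target $H^2$'s can be simultaneously ``high-dimensional'' --- long root strings, the doubling $\gamma\mapsto2\gamma$ in type $C$, and in the unitary case the outer Galois twist on $\bG$ --- and the naive induction breaks down. The new input, matching the abstract's ``new method for analyzing the vanishing locus of Galois cup products valued in high dimensions,'' should be a direct transversality/generic-rank argument bounding $\operatorname{codim}\operatorname{ob}^{-1}(0)$ without unwinding the combinatorics --- for instance, using Tate local duality to pair $H^1(\bFp(\psi_\gamma))$ against $H^1(\bFp(\psi_\gamma^{-1}\chi_{\cyc}))$ and thereby exhibit enough unobstructed directions to hit every $H^2$-summand. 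Granting this, the two bounds coincide; the three families in the statement then differ only in the bookkeeping of root data and the Galois action on $\bG$, which is routine, and the same components --- parametrised by their inertial types --- also yield the promised description of the top-dimensional Chow group.
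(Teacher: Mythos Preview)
Your proposal has the roles of the two bounds reversed. In the paper the \emph{lower} bound is the cheap direction: it is read off from the known dimension and $p$-flatness of de Rham lifting rings of regular Hodge type (\cite{BG19}), since $[\spec R\otimes\bFp/\bG_n]_{\red}\hookrightarrow\cX_{\lsup LG_n}$ exhibits a substack of dimension $u_n$. No obstruction analysis is required; equivalently, in your Borel picture, a \emph{generic} $\bar\psi$ already has all $H^2(\Gal_F,\bFp(\psi_\gamma))=0$, so the tower is trivially unobstructed and produces a component of the expected dimension. The ``crux'' you identify for the lower bound is therefore a non-issue there.

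The \emph{upper} bound is where the new cup-product method enters, and it does \emph{not} follow from ``general theory\dots just as for $\GL_n$.'' The Emerton--Gee argument for $\GL_n$ stratifies over maximal parabolics $\bP=U\rtimes\bM$ and uses crucially that the fibres of $\cX_{\bP}\to\cX_{\bM}$ have vector spaces as coarse moduli, because $U$ is abelian. For the other classical groups the unipotent radical of a maximal parabolic is nilpotent of class~$2$: writing $Z=[U,U]$ and $V=U/Z$, the fibre over a point of $\cX_{\lsup LM}$ is an affine \emph{cone}, namely the vanishing locus inside $H^1(\Gal_F,V)$ of the cup-product map to $H^2(\Gal_F,Z)$. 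Bounding every stratum above by $u_n$ therefore requires a sharp lower bound on the codimension of this cone on \emph{every} stratum, including those with $v_2=\dim H^2(\Gal_F,V)>0$ --- this is the paper's cone theorem (Theorem~\ref{thm:cone}, Corollary~\ref{cor:cone}), proved via an inner-product/alternating-form structure on $H^1$ coming from Tate duality combined with the $\Delta$-involution, and then fed into the induction of Proposition~\ref{prop:universal} and Theorem~\ref{thm:universal}. So your obstruction map $\operatorname{ob}$ is morally the right object, but it belongs to the upper bound on the non-generic strata, and the paper attacks it one maximal parabolic at a time rather than through the full Borel tower.
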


\begin{thm}[Theorem \ref{thm:main2}]
Label the Dynkin diagram of $\bG$
by
\begin{itemize}
\item[(Type A)] 
\dynkin[
  labels={\alpha_1,\alpha_2,\dots,
        \alpha_r, \dots
        , \j\alpha_2,\j\alpha_1},
  scale=2
]{A}{7}
\item[(Type B)] 
\dynkin[
  labels={\alpha_1,\alpha_2,\alpha_3,\dots,\alpha_r},
  scale=2
]{B}{}
\item[(Type C)] 
\dynkin[
  labels={\alpha_1,\alpha_2,\alpha_3,\dots,\alpha_r},
  scale=2
]{C}{}
\item[(Type D)] 
\dynkin[
labels={\alpha_1,\alpha_2,\alpha_3,\dots,\alpha_{r-1}, \alpha_{r}},
  scale=2
]{D}{}
\end{itemize}
There exist
natural isomorphisms
\begin{itemize}
\item[($A_{2n}$)]
$
(\alpha_1^\vee, \dots,\alpha_n^\vee, \Remain
):
\Chtop(\cX_{\lsup LU_{2n+1}})
\xrightarrow{\cong} X_1(K, G_m)^{\times n}
\times 
\Chtop(\cX_{\lsup LU_{1}})
$,
\item[($A_{2n+1}$)]
$
(\alpha_1^\vee, \dots,\alpha_n^\vee, \Remain
):
\Chtop(\cX_{\lsup LU_{2n+2}})
\xrightarrow{\cong} X_1(K, G_m)^{\times n}
\times 
\Chtop(\cX_{\lsup LU_{2}})
$,
\item[($B_{n}$)]
$
(\alpha_1^\vee, \dots,\alpha_{n-1}^\vee, \Remain
):
\Chtop(\cX_{\GSp_{2n}})
\xrightarrow{\cong} X_1(K, G_m)^{\times (n-1)}
\times 
\Chtop(\cX_{\GL_2})
$,
\item[($C_{n}$)]
$
(\alpha_1^\vee, \dots,\alpha_{n-1}^\vee, \Remain
):
\Chtop(\cX_{\GSO_{2n+1}})
\xrightarrow{\cong}X_1(K, G_m)^{\times (n-1)}
\times 
\Chtop(\cX_{\GSO_3})
$,
\item[($D_{n}$)]
$
(\alpha_1^\vee, \dots,\alpha_{n-1}^\vee, \Remain
):
\Chtop(\cX_{\GSO_{2n}})
\xrightarrow{\cong} X_1(K, G_m)^{\times (n-2)}
\times 
\Chtop(\cX_{\GSO_4})
$.
\end{itemize}
See Definition
\ref{dfn:x1} for definitions.
\end{thm}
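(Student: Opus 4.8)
The plan is an induction on the absolute rank $r$ of $\bG$ in which one peels off at each step the outermost simple node of the Dynkin diagram — the node $\alpha_1$ in types $B$, $C$, $D$, and the pair $\{\alpha_1,\alpha_1'\}$ exchanged by the folding in type $A$ (write $\alpha_1'$ for the conjugate node) — until one reaches the base cases $\cX_{\lsup LU_1}$, $\cX_{\lsup LU_2}$, $\cX_{\GL_2}$, $\cX_{\GSO_3}$, $\cX_{\GSO_4}$. The geometric input is the description of the top-dimensional irreducible components of $\cX_{\lsup LG}$ underlying the dimension count of Corollary~\ref{cor:main}: every component of maximal dimension $[F:\Qp]\dim\bG/\brB$ is the Zariski closure of the locus of Galois parameters maximally non-split along the Borel $\brB$, and such a locus is determined by a chain of characters of $\Gal(\overline K/K)$ compatible with the similitude or duality datum cutting $\bG$ out of $\GL_N$ (with $N$ the dimension of the standard representation), together with a shape — a Weyl datum and a Serre weight at each node. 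Since the top Chow group of a finite-type stack is free on its top-dimensional components, $\Chtop(\cX_{\lsup LG})$ is the free abelian group on this combinatorial set, and the theorem asserts that it factors as stated. The coordinate $\alpha_i^\vee$ ($i<r$) records, through the simple coroot $\alpha_i^\vee\colon\Gm\to\bT$ taken $\Gal(K/F)$-equivariantly after folding in type $A$, the extension datum between the $i$-th and $(i{+}1)$-st members of the Borel filtration as a class in the group $X_1(K,\Gm)$ of Definition~\ref{dfn:x1}, while $\Remain$ forgets the peeled node(s), i.e.\ passes to the Levi $\lsup LU_1$, $\lsup LU_2$, $\GL_2$, $\GSO_3$, $\GSO_4$; functoriality of Emerton--Gee stacks in the $L$-group shows these are well-defined homomorphisms independent of auxiliary trivializations.

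Injectivity is then formal: on a top-dimensional component the tuple $(\alpha_1^\vee,\dots,\alpha_{r-1}^\vee)$ reconstructs the full character chain, since the simple coroots together with the residual datum span $X_*(\bT)$ up to the similitude cocharacter and the unpeeled rank-$\le 2$ block, and the $\Remain$-coordinate then fixes the remaining shape and Serre weights on the small stack. Thus the map carries the generating set of $\Chtop(\cX_{\lsup LG})$ bijectively into (a basis of $X_1(K,\Gm)$)$^{\times(r-1)}$ times the generating set of $\Chtop$ of the small stack, hence is injective.

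Surjectivity is the heart of the matter: one must show that \emph{every} combinatorially admissible datum — an arbitrary element of $X_1(K,\Gm)^{\times(r-1)}$ and an arbitrary top-dimensional component of the small stack — is realized by a genuine component of $\cX_{\lsup LG}$ of the full dimension $[F:\Qp]\dim\bG/\brB$. Here the $\GL_n$-style arguments fail, because in a classical group the root subspaces of the unipotent radical of $\brB$ can have Galois-cohomological dimension greater than one, so the stratum attached to a candidate character chain can a priori drop in dimension once certain Galois cup products — and their iterated Massey refinements — are forced to vanish. This is precisely where the paper's method for the vanishing locus of cup products valued in high dimensions enters: for each peeled node one exhibits, on the vanishing locus of the relevant cup product, a family of extensions of exactly the expected dimension, which lets the inductive hypothesis for the Levi apply and reconstructs the component by inflating along $\alpha_i^\vee$. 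I expect this dimension-preservation on the cup-product vanishing locus, carried out uniformly over the folding in type $A$ and the similitude twist in types $B$, $C$, $D$, to be the principal obstacle; the remaining bookkeeping — Serre weights at the peeled node, and the collapse of two Galois-conjugate data to a single factor $X_1(K,\Gm)$ in type $A$ — is routine.

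Finally, granting the one-step reduction, the five isomorphisms follow by iteration: $\lsup LU_{2n+1}\rightsquigarrow\lsup LU_{2n-1}\rightsquigarrow\cdots\rightsquigarrow\lsup LU_1$ and $\lsup LU_{2n+2}\rightsquigarrow\cdots\rightsquigarrow\lsup LU_2$ in type $A$, and $\GSp_{2n}\rightsquigarrow\cdots\rightsquigarrow\GL_2$, $\GSO_{2n+1}\rightsquigarrow\cdots\rightsquigarrow\GSO_3$, $\GSO_{2n}\rightsquigarrow\cdots\rightsquigarrow\GSO_4$ in types $B$, $C$, $D$, each step contributing one factor $X_1(K,\Gm)$; naturality is automatic, since every map occurring is induced by a homomorphism of $L$-groups.
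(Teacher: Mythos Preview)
Your inductive skeleton — peel off one node at a time, reducing $\lsup LG_n$ to $\lsup LG_{n-2}$ with a factor of $X_1(K,\Gm)$ — is the right shape, but the content at each step is misdescribed in a way that would not close.

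First, the coordinate $\alpha_1^\vee$ does not record an extension class. By Definition~\ref{dfn:x1}, $X_1(K,\Gm)=\Chtop(\cX_{\lsup L\Res_{K/F}\Gm})\cup\{\infty\}$: the non-$\infty$ part records the character $\alpha$ appearing in the Levi factor $\lsup LM_{1,n}=\lsup L\Res_{K/F}\GL_1\times\lsup LG_{n-2}$, while $\{\infty\}$ is a formal symbol absorbing an entirely different stratum. The paper's one-step isomorphism is Proposition~\ref{prop:decomposition}:
\[
\Chtop(\cX_{\lsup LG_n})\;\xleftarrow{\cong}\;\Chtop(\cX_{\lsup LP_{1,n}})\;\xrightarrow{\cong}\;\Chtop(\cX_{\lsup LM_{1,n}})\oplus\Chtop(\cX_{\lsup LG_{n-2}}),
\]
where the two summands on the right come from the loci $v_2:=\dim H^2(\Gal_F,V_{1,n})=0$ and $v_2=1$ respectively. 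Your ``chain of characters along the Borel with Serre-weight/Weyl data'' picture does not capture this $v_2$-dichotomy, and without it you cannot account for the $\{\infty\}$ factor.

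Second, the division of labor is inverted. The cone theorem (Theorem~\ref{thm:cone}, Corollary~\ref{cor:cone}) is not used to \emph{build} families on the cup-product vanishing locus; it is used negatively, to show that the strata with $m>1$ (Corollary~\ref{cor:extra-m}) and with $v_2\ge 2$ (Theorem~\ref{thm:universal}(3)) have dimension strictly below $u_n$ and hence contribute no top components. Once only $m=1$ and $v_2\in\{0,1\}$ survive, the fibers of $\cX_{\lsup LP_{1,n}}^{(v_2;z_2)}\to\cX_{\lsup LM_{1,n}}$ are analyzed directly: for $z_2=0$ they are affine spaces, and for $z_2=1$ they are hypersurfaces cut by a nondegenerate symmetric form, whose \emph{irreducibility} (Lemma~\ref{lem:mixed-term}, Lemma~\ref{lem:irr-unitary}) is what gives the bijection on components — a point your sketch omits.

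Third, your ``injectivity is formal'' is not: one must show that $\cX_{\lsup LP_{1,n}}\to\cX_{\lsup LG_n}$ is itself a bijection on top-dimensional components. Surjectivity onto top components is Lemma~\ref{lem:image-P1}; injectivity (distinct $\bP_{1,n}$-components stay distinct after forgetting the parabolic structure) is Corollary~\ref{cor:mns}, proved via the Bruhat decomposition and the dimension drop of Lemma~\ref{lem:properV}. None of this is visible from a character-chain description.
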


\begin{cor}
    [Corollary \ref{cor:main3}]
All top-dimensional irreducible components
of $\cX_{\lsup LG}$
are covered by algebraic cycles
defined by reduction mod $p$
of de Rham stacks of regular Hodge type.
\end{cor}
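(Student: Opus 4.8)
The plan is to combine the dimension formula of Theorem~\ref{cor:main} with the explicit description of $\Chtop$ in Theorem~\ref{thm:main2}, and to show that the isomorphisms of the latter carry classes of de Rham stacks of regular Hodge type to products of such classes. First I would record that, for a regular Hodge cocharacter $\lambda$, the closed substack $\cX^{\mathrm{dR}}_{\lsup LG,\lambda}\subseteq\cX_{\lsup LG}$ obtained by reducing mod $p$ the $G$-valued de Rham deformation stack of Hodge type $\lambda$ is equidimensional of dimension $[F:\bQ_p]\dim\bG/\brB$ (for $\GL_n$ this is Emerton--Gee; in general it is the familiar dimension count for de Rham deformation problems of regular Hodge type, the same one feeding into the proof of Theorem~\ref{cor:main}). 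Since Theorem~\ref{cor:main} gives $\dim\cX_{\lsup LG}=[F:\bQ_p]\dim\bG/\brB$, every irreducible component of $\cX^{\mathrm{dR}}_{\lsup LG,\lambda}$ is a top-dimensional component of $\cX_{\lsup LG}$ and defines a class in $\Chtop(\cX_{\lsup LG})$. The corollary asserts that these components exhaust the top-dimensional components of $\cX_{\lsup LG}$; equivalently, as $\Chtop(\cX_{\lsup LG})$ is free on the top-dimensional components, that the classes $[\cX^{\mathrm{dR}}_{\lsup LG,\lambda}]$ generate $\Chtop(\cX_{\lsup LG})$.

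Next I would reduce to tori and to the rank-$\le 2$ groups via Theorem~\ref{thm:main2}. The crucial input is that the isomorphism $(\alpha_1^\vee,\dots,\alpha_n^\vee,\Remain)$ is compatible with de Rham cycles: it sends the class of $\cX^{\mathrm{dR}}_{\lsup LG,\lambda}$ to a product of classes of de Rham stacks, again of regular Hodge type, in the factors $X_1(K,\Gm)$ and in $\Chtop(\cX_{\mathrm{base}})$, where $\cX_{\mathrm{base}}$ is the rank-$\le 2$ stack occurring in the relevant case of Theorem~\ref{thm:main2} (one of $\cX_{\lsup LU_1}$, $\cX_{\lsup LU_2}$, $\cX_{\GL_2}$, $\cX_{\GSO_3}$, $\cX_{\GSO_4}$). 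This follows from the construction of the coordinate maps together with functoriality of the de Rham condition along the morphisms of $L$-groups that govern those maps, once one notes that $\lambda$ can be taken generic enough that all of its images in the factors remain regular --- a nonempty open condition, since regularity fails only along finitely many hyperplanes. As the map of Theorem~\ref{thm:main2} is an isomorphism, it then suffices to produce enough de Rham classes in each factor; and as these classes form a subgroup, it is enough to realize, for each factor, one de Rham class mapping to a chosen generator of that factor together with a fixed reference class in the remaining factors, after which differences isolate the individual generators.

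For the $X_1(K,\Gm)$ factors this is the torus case: by Definition~\ref{dfn:x1} these groups are described by Hodge--Tate (Lubin--Tate) data for characters of $G_K$, and a crystalline character of any prescribed regular Hodge--Tate weight realizes each generator --- for $\Gm$ regularity imposes nothing. For $\cX_{\GL_2}$ this is the Emerton--Gee description of the irreducible components, each of which is met by a crystalline locus of regular Hodge--Tate weight; and $\lsup LU_1$, $\lsup LU_2$, $\GSO_3$, $\GSO_4$ are, up to restriction of scalars, central isogeny and the $\Gal(K/F)$-twist, assembled from $\GL_1$ and $\GL_2$, so the same inputs apply once one checks those operations preserve regularity of the Hodge type for generic $\lambda$. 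Combining, the de Rham classes of regular Hodge type map onto a generating set of $X_1(K,\Gm)^{\times k}\times\Chtop(\cX_{\mathrm{base}})$ (with $k$ the exponent in the relevant case of Theorem~\ref{thm:main2}), hence generate $\Chtop(\cX_{\lsup LG})$. Reading the argument one component at a time shows moreover that each top-dimensional component of $\cX_{\lsup LG}$ is an irreducible component of $\cX^{\mathrm{dR}}_{\lsup LG,\lambda}$ for a suitable regular $\lambda$, which is the set-theoretic form of being ``covered''.

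The step I expect to be the main obstacle is the de-Rham-compatibility of Theorem~\ref{thm:main2}: that isomorphism is produced by the paper's analysis of the vanishing locus of Galois cup products in high degree (via Property SSD), not by a transparent geometric morphism of stacks, so one must retrace that construction and verify that every top-dimensional component it isolates is genuinely met by an explicit de Rham deformation --- most naturally a Borel-induced (``ordinary'') crystalline parameter of suitably generic regular Hodge type --- and that $\lambda$ can be kept generic at every inductive step simultaneously for all the coordinate maps. A secondary, more bookkeeping difficulty is tracking ``regular Hodge type'' through the coordinate morphisms and the central isogenies in the rank-$2$ base cases, and through the $\Gal(K/F)$-twist in the unitary case.
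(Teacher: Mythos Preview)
Your plan is essentially the one the paper sketches in the Remark following Corollary~\ref{cor:main3}: exploit the inductive decomposition of Theorem~\ref{thm:main2} and build the regular Hodge type $\lambda$ one simple coroot at a time, reducing to the small-rank base cases. The paper's actual proof, however, is a one-line appeal to the companion paper \cite{L25A}, which supplies precisely the machinery you flag as the main obstacle --- turning the component labelling of Theorem~\ref{thm:main2} into an unconditional existence statement for de Rham lifts of regular Hodge type covering each component. The Remark is more concrete than your sketch in one respect: it prescribes $\alpha_1(\lambda)=2p-\alpha_1^\vee(C)$ when $\alpha_1^\vee(C)\neq\infty$ and $\alpha_1(\lambda)=1$ otherwise, which is exactly the explicit choice you would need in order to verify the compatibility you are worried about. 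So your diagnosis is accurate: the de-Rham-compatibility step is the real content here, and the paper does not prove it internally but delegates it to \cite{L25A}.
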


Our intermediate results are strong enough
to establish the unconditional existence of crystalline
lifts for classical groups,
which has its own unique difficulties
and is worked out in a separate paper.
The majority of this paper
attacks all three types of groups simultaneously,
with the group-specific portions
concentrated in the first few section.

The strategy used by \cite{EG23}
for $\bG=\GL_n$ is induction along
its  maximal proper parabolics
$\bP=U\rtimes \bM$,
exploiting the abelian-ness of 
the unipotent radical $U$;
the coarse moduli space
of all fibers of
$\cX_{U\rtimes \bM}\to\cX_{\bM}$
are vector spaces
and
the induction process is largely analyzing
the rank of these vector spaces.

The major challenge for extending this strategy to more
general groups is that $U$ is rarely abelian.
For classical groups other than $\GL_n$, $U$ is typically
nilpotent of class $2$.
Now 
the coarse moduli space
of fibers of
$\cX_{U\rtimes \lsup LM}\to\cX_{\lsup LM}$
are affine cones;
here $\lsup LM = \bM\rtimes \Gal(K/F)$.
Write $Z:=[U,U]$ and $V:=U/Z$.
The coarse moduli space of fibers of both
\begin{align*}
g&: \cX_{V\rtimes \lsup LM}\to\cX_{\lsup LM}\text{, and}
\\
h&: \cX_{U\rtimes \lsup LM}\to\cX_{V\rtimes\lsup LM}
\end{align*}
are indeed vector spaces.
Write $\cX_{V\rtimes\lsup LM}^c$
for the image of $h$
(which is automatically a closed substack).
The new input needed is the fiberwise codimension
of
$\cX_{V\rtimes\lsup LM}^c$
in
$\cX_{V\rtimes\lsup LM}$
along $g$,
and for the proof to work, we need 
a really non-trivial estimate of this codimension.

This paper
is an expansion and vast generalization of
the last section of the initial draft
of \cite{L25A}
combined with the last section of the initial draft
of \cite{L25B}.
This paper is presented independently because 
its methods and results are independent to
other parts of these studies,
thereby avoiding unpleasant notational conflicts and verbosity.

\section{A source of inspiration}

Before we dive into the proof,
we revisit $\GL_5$
as a source of inspiration.
We fix a Galois character $\chi:\Gal_K\to \bFp^\times$,
and consider all Galois representations
of the form
\begin{align}
\label{eq:galois-rep}
\left(
\begin{array}{cc|c|cc}
\chi_\cyc & 0 & \multicolumn{1}{c|}{*_1} & * & * \\ 
0 & \chi_\cyc & \multicolumn{1}{c|}{*_2} & * & * \\ \hline
 &  & \multicolumn{1}{c|}{\chi} & *_3 & *_4 \\ \cline{3-5}
& \multicolumn{1}{c}{} & & 1 & 0 \\
& \multicolumn{1}{c}{} & & 0 & 1
\end{array}
\right)
\end{align}
where $\chi_\cyc:\Gal_K\to \bFp^\times$
is the cyclotomic character.
So, 
$\begin{bmatrix}*_1 \\ *_2\end{bmatrix}
\in \Ext^1(\chi, \chi_\cyc^{\oplus 2})
=H^1(\Gal_K, \chi^{-1}\chi_\cyc)^{\oplus 2}$
and 
$\begin{bmatrix}*_3 & *_4\end{bmatrix}
=H^1(\Gal_K, \chi)^{\oplus 2}$.
We have
$V= \chi^{\oplus 2}\oplus \chi^{-1}\chi_\cyc^{\otimes 2}$
and 
$Z = \Hom(1^{\oplus 2}, \chi_\cyc^{\oplus 2})$.

We regard $X:=H^1(\Gal_K, V)$
as an affine space,
and the collection
$$X^c:=\{(*_1, *_2, *_3, *_4) |*_1, *_2, *_3, *_4
\text{~can fit in \eqref{eq:galois-rep}}
\}$$
forms an affine cone inside $X$.
The question is $\dim X - \dim X^c = ?$.

First, consider the part of the cone where $*_3=*_4=0$.
In this case, the choice in $(*_1, *_2)$ is arbitrary:
so the dimension is 
$\dim H^1(\Gal_K, \chi^{-1}\chi_\cyc)^{\oplus 2}$
and the codimension is
$\dim H^1(\Gal_K, \chi)^{\oplus 2}$.

Second, consider the part of the cone where $*_3=0$ and $*_4\neq 0$;
\[
\left(
\begin{array}{cc|c|c}
\chi_\cyc & 0 & \multicolumn{1}{c|}{*_1} &  \\ 
0 & \chi_\cyc & \multicolumn{1}{c|}{*_2} & \\ \hline
 &  & \multicolumn{1}{c|}{\chi}  & *_4 \\ \cline{3-4}
& \multicolumn{1}{c}{} &  & 1
\end{array}
\right)
\]
$*_2$ is subject to a single constraint
due to local Tate duality pairing $*_2\cup*_4 = 0$.
It is less obvious but $*_1$
is also subject to a single,
transversal constraint.
So the codimension is
$2 + \dim H^1(\Gal_K, \chi)$.

It is not enough to just consider the cases where
$*_3=0$,
we need to consider all $1$-dimensional subspaces of 
the representation space
$\Mat_{1\times 2}$ of $(*_3, *_4)$,
which form the Grassmannian variety $\GR(1,2)$.
So the codimension of the combined loci
is equal to 
$2+\dim H^1(\Gal_K, \chi) - \dim \GR(1,2)
=1+\dim H^1(\Gal_K, \chi)$.

Third, consider the part of the cone where 
$(*_3, *_4)\neq (0,0)$;
indeed we impose the stronger full-rank
assumption that 
$\Span(*_3, *_4)$ has dimension $2$.
It is even less obvious, but
the constraints of $*_3$ and $*_4$
imposed 
on $*_1$ and $*_2$
are transversal.
So the codimension is $2\times2=4$.
The upshot is the codimension is
$
\min
\begin{cases}
2\dim H^1(\Gal_K, \chi) \\
1+\dim H^1(\Gal_K, \chi) \\
4.
\end{cases}
$

We can replace $\chi$ by
a higher rank Galois representation
$\rho: \Gal_K\to\GL_n(\bFp)$
and set 
$s:=\dim H^0(\Gal_K, \rho)$,
$t:=\dim H^2(\Gal_K, \rho)$.
The following generalization
will {\it not} be used in this paper,
and
we leave it an an exercise to the curious reader.

\begin{exer}
The locus of
$X^c:=\{(*_I, *_{II})\}\subset H^1(\Gal_K, V)=:X$ 
consisting of extension classes
that fit in
a Galois representation
\[
\begin{bmatrix}
\chi_\cyc^{\oplus m} & *_I & * \\
& \rho & *_{II} \\
& & 1^{\oplus m}
\end{bmatrix}
\]
has codimension at least
$$
\min_{0\le k\le m} (m-k)^2 + k(s+t+[K:\bQ_p]m).
$$
\end{exer}

\section{$\Delta$-involution structure for classical groups}
\label{sec:classic}
We first analyse classical groups one by one,
and extract axiomatically common structures.
We use the convenient convention that
$\GSp_0 = \GO_0=\GL_1$, and $\Sp_0=\SO_0=U_0=1$.

Denote by $w_n=\begin{bmatrix}& & 1 \\& \cdots & \\1&&\end{bmatrix}$
the antidiagonal matrix whose non-zero entries
are $1$;
and
denote by 
$w_n'=\begin{bmatrix}& & 1 \\& -1 & \\\cdots&&\end{bmatrix}$
the antidiagonal matrix whose non-zero entries
are alternating $1$, $-1$, $1$, $-1$,$\cdots$.

Define $\wt P_m :=
\begin{bmatrix}
\GL_m & \Mat_{m\times (n-2m)} & \Mat_{m \times m} \\
& \GL_{n-2m} & \Mat_{(n-2m)\times m} \\
& & \GL_m
\end{bmatrix}\subset \GL_n$.
Denote by $\wt U_m$
the unipotent radical of $\wt P_m$; we have
\begin{eqnarray*}
\wt Z_m :=\gr^0 \Lie \wt U_m &=& \Mat_{m\times m} \\
\wt V_m :=\gr^1 \Lie \wt U_m &=& 
\Mat_{m\times (n-2m)} \times \Mat_{(n-2m)\times m}=:
\wt V_I\times\wt V_{II}.
\end{eqnarray*}
Here we set $\wt V_I = \Mat_{m\times (n-2m)}$
and $\wt V_{II} = \Mat_{(n-2m)\times m}$.

\begin{eg}[Symplectic groups]

Let $n$ be an even integer.
The Dynkin diagram for $\GSp_{n}$
is 
\dynkin[
  labels={\alpha_1,\alpha_2,\alpha_3,\dots,\alpha_{n/2}},
  scale=2
]{B}{}.
Thus the maximal proper Levi subgroups of $\GSp_{n}$
are of the form
$$\GL_m \times \GSp_{n-2m} =: \bM_m,$$
for $m \le n/2$.
Write $\bP_m=U_m\rtimes \bM_m$ for the corresponding parabolic subgroup.
We have 
$
\bP_m =
\GSp_{n} \cap \wt P_m
$
if the following presentation of $\GSp_{n}$ is used:
$$
\GSp_{n} = \{X\in \GL_{n}|
X^t 
\begin{bmatrix}
& & w_m \\
& w'_{n-2m} & \\
-w_m & &
\end{bmatrix}
X = 
\lambda
\begin{bmatrix}
& & w_m \\
& w'_{n-2m} & \\
-w_m & &
\end{bmatrix}
$$

Define an $\Delta:=\{1, \j\}$-action on $\Lie \wt U_m$ by
\begin{eqnarray}
\label{eqn:symp}
    \j (x, y) :=& (y^tw'_{n-2m}, -w'_{n-2m} x^t),
& (x, y)\in \Mat_{m\times (n-2m)} \times \Mat_{(n-2m)\times m}\\
\j z :=& w_mz^tw_m,& z\in \Mat_{m\times m}.
\end{eqnarray}

We have
\begin{eqnarray*}
\wt Z_m^\Delta=(\gr^0 \Lie \wt U_m)^\Delta &=& \gr^0 \Lie U_m=:Z_m \\
\wt V_m^\Delta=(\gr^1 \Lie \wt U_m)^\Delta &=& \gr^1 \Lie U_m=:V_m \\
\j \wt V_I &=& \wt V_{II}.
\end{eqnarray*}
\end{eg}

\begin{eg}[Orthogonal groups]
The Dynkin diagram for $\GSO_n$
is 
\dynkin[
  labels={\alpha_1,\alpha_2,\alpha_3,\dots,\alpha_{n-1}/2},
  scale=2
]{C}{}
 or 
\dynkin[
labels={\alpha_1,\alpha_2,\alpha_3,\dots,\alpha_{n/2-1}, 
\alpha_{n/2}},
  scale=2
]{D}{}.
The maximal proper Levi subgroups of $\GSO_n$
are of the form
$$\GL_m \times \GSO_{n-2m}=:\bM_m.$$
Write $\bP_m=U_m\rtimes \bM_m$ for the corresponding parabolic
 subgroup.
We use the following presentation of $\GSO_{n}$:
$$
\GSO_{n} =\text{the neutral component of~} \{X\in \GL_{n}|
X^t 
w_n
X = \lambda 
w_n
\}
$$
(warning: for example $\{X\in \GL_{2}|
X^t 
w_2
X = \lambda 
w_2
\} = (\GL_1 \times \GL_1) \rtimes \bZ/2\bZ$
is not connected).

We have 
$
\bP_m
=\GSO_{n} \cap \wt P_m
$
where
$\wt P_m$ is the same as before.

Define an $\Delta:=\{1, \j\}$-action on $\Lie \wt U_m$ by
\begin{eqnarray}
\label{eqn:orth}
\j (x, y) :=& (-y^tw_{n-2m}, - w_{n-2m}x^t),& (x, y)\in \Mat_{m\times (n-2m)} 
\times \Mat_{(n-2m)\times m}\\
\j z :=& -w_mz^tw_m,& z\in \Mat_{m\times m}.
\end{eqnarray}

We have
\begin{eqnarray*}
\wt Z_m^\Delta=(\gr^0 \Lie \wt U_m)^\Delta &=& \gr^0 \Lie U_m=:Z_m \\
\wt V_m^\Delta=(\gr^1 \Lie \wt U_m)^\Delta &=& \gr^1 \Lie U_m=:V_m \\
\j \wt V_I &=& \wt V_{II}.
\end{eqnarray*}
\end{eg}

\begin{eg} [Unitary groups]
Now assume $U_n$ is a quasi-split unitary group over $F$
which splits over the quadratic extension $K/F$
(it is tamely ramified or unramified since $p\ne 2$).

The $L$-group $\lsup LU_n = \GL_n \rtimes \{1, \j\}$
where $\j$ acts on $\GL_n$ via $w_n(-)^{-t}w_n^{-1}$.
The Dynkin diagram of $U_n$ is a chain of $(n-1)$-vertices
(\dynkin{A}{}),
and $\Delta=\Gal(K/F)$ acts on it by reflection.
The maximal proper $\Delta$-stable subsets of $\Dyn(U_n)$
are given by removing either two symmetric vertices, or the middle vertex.
Therefore, the Levi subgroups of maximal proper $F$-parabolics
of $U_n$ are of the form
$$
M_m:=\Res_{K/F}\GL_m \times U_{n-2m}.
$$
Write $\lsup LP_m=\wt U_m\rtimes \lsup LM_m=
\wt P_m \rtimes \{1, \j\}$
for the corresponding parabolic.

We will also write $U_m=\wt U_m$
for the sake of uniformity
when dealing with all groups simultaneously;
despite $U$ means both unitary group
and unipotent radical,
it should be clear from the context.

In this paragraph, we
fix an $L$-parameter $\tau:\Gal_K\to \lsup LM_m(\bFp)$,
which induces the (adjoint) action of $\Gal_K$
on $\wt U_m(\bFp)$.
$\Delta$ 
acts on the Galois cohomology
$H^1(\Gal_K, \wt U_m(\bFp))$
via $\j[c] = (\gamma\mapsto \j \cdot c[\j^{-1}\gamma \j])$,
and by 
\cite[Theorem 3.15]{Ko02},
we have
$$
H^i(\Gal_F, \gr^j\Lie \wt U_m(\bFp))
 =
H^i(\Gal_K, \gr^j\Lie \wt U_m(\bFp))^{\Gal(K/F)}
=
H^i(\Gal_K, \gr^j\Lie \wt U_m(\bFp))^{\Delta},
$$
for all $i$ and $j$.
Moreover,
$$
\j H^1(\Gal_K, \wt V_I) = H^1(\Gal_K, \wt V_{II}).
$$
\end{eg}

\section{The affine cone problem I: axiomization}

Denote by $\lsup LG_n$ any of $\GSp_n$, $\GO_n$ or $\lsup LU_n$.
In this section, we don't notationally
distinguish an algebraic group
with its $\bFp$-points;
so, for example, $H^1(\Gal_F, U_m)$
is always understood as
$H^1(\Gal_F, U_m(\bFp))$.

We extract some common structure
from the case-by-case study in Section \ref{sec:classic}
of $\GSp_n$, $\GO_n$ and $\lsup LU_n$
(recall $K$ is the splitting field, and is either $F$ or a quadratic 
extension).
Set 
$$\lsup L{\wt P}_m := \wt P_m \rtimes \Gal(K/F)$$
in each of these cases for the sake of uniformity.

We have already seen the following features:
\begin{enumerate}
\item[(D1)] The maximal parabolics are of the form
$$
\lsup LP_m
= U_m\rtimes \lsup LM_m
= \lsup LG_n \cap \lsup L{\wt P}_m,
$$
\item[(D2)]
we have
$$
H^i(\Gal_K, \gr^j\Lie \wt U_m)^\Delta
=H^i(\Gal_F, \gr^j\Lie U_m)
$$
for all $i,j$,
\item[(D3)]
$\j H^1(\Gal_K, \wt V_I) = H^1(\Gal_K, \wt V_{II})$.
\end{enumerate}

We will fix an $L$-parameter
$$
\tau: \Gal_F \to \lsup LG_{n-2m}~ (=\lsup LG_{n-2m}(\bFp))
$$
as well as
a character $\alpha:\Gal_K \to \bFp^\times$.
Denote by $(\alpha, \tau)_m$
the parameter
$\Gal_F \to \lsup L(\Res_{K/F}\GL_m\times G_{n-2m})$
that has the matrix presentation
$
\begin{bmatrix}
\alpha^{\oplus m} & 0 & 0 \\
0 & \tau & 0 \\
0 & 0 &\beta^{\oplus m}
\end{bmatrix}
$
under the embedding
$\lsup LP_m \subset \lsup L{\wt P}_m$;
we remark that
here $\beta$ is completely determined by
$\alpha$ and $\tau$.
Let 
$$
X:=H^1(\Gal_F, V_m)
$$
be the vector space of all extensions
of $(\alpha, \tau)_m$
to $\Gal_F \to V_m\rtimes \lsup LM_m$,
and denote by
$X^c\subset X$
the affine cone consisting
of extensions that can be further extended
to $\Gal_F \to \lsup LP_m$.
It is clear that either
$H^2(\Gal_F, \wt Z_m)=0$
or
$\dim H^2(\Gal_F, \wt Z_m)=m^2$;
and in the first case, $X^c=X$.
So, we will assume we are in the second case,
and in particular
$\alpha = \beta (1) = \beta\cdot\chi_\cyc$.

\subsection{Eigenspace decomposition}
Since $p>2$,
we can decompose $H^1(\Gal_K, \wt V_m)$
according to the eigenvalues $\pm1$ of $\j$:
$$
H^1(\Gal_K, \wt V_m)
=
H^1(\Gal_K, \wt V_m)^+
\oplus
H^1(\Gal_K, \wt V_m)^-
:=
\{c|\j\cdot c = c\}
\oplus
\{c|\j\cdot c = -c\}.
$$
On the other hand,
we have
$
\wt V_m
=
\wt V_I
\oplus
V_{II}.
$
The upshot is we have
the following commutative diagram of isomorphisms
of vector spaces:
$$
\xymatrix{
H^1(\Gal_F, V_m) \ar@{=}[rr]
&& H^1(\Gal_K, \wt V_m)^\Delta \ar@{=}[d] \\
H^1(\Gal_K, \wt V_I)
\ar[d]^{\j}
 \ar@{->}[rr]_{\cong}^{\substack{\Tr:c\mapsto (c + \j\cdot c)/2 }}
&& H^1(\Gal_K, \wt V_m)^+ \ar@{-->}[d]^{\cong} \\
H^1(\Gal_K, \wt V_{II})
 \ar@{->}[rr]_{\cong}^{\substack{c\mapsto (-c + \j\cdot c)/2 }}
&& H^1(\Gal_K, \wt V_m)^-
}
$$
For each $c\in H^1(\Gal_K, \wt V_m)$,
write $c=c_I+c_{II}$
where $c_I\in H^1(\Gal_K, \wt V_I)$
and
$c_{II}\in H^1(\Gal_K, \wt V_{II})$.

\subsection{Cup product and obstructions}
By the long exact sequence for Galois cohomology,
the obstruction to extending
classes $c\in H^1(\Gal_F, V_m)$
to $H^1(\Gal_F, U_m)$
lies in $H^2(\Gal_F, Z_m)$:
$$
\xymatrix{
H^1(\Gal_F, U_m) \ar[r]
& H^1(\Gal_F, V_m) \ar[rr] \ar@{=}[d]
&& H^2(\Gal_F, Z_m) \ar@{=}[d] \\
H^1(\Gal_K, \wt U_m)^\Delta \ar[r]
& H^1(\Gal_K, \wt V_m)^\Delta \ar[rr]
\ar@{^{(}->}[d]
&& H^2(\Gal_K, \wt Z_m)^\Delta
\ar@{^{(}->}[d]
\\
H^1(\Gal_K, \wt U_m) \ar[r]
& H^1(\Gal_K, \wt V_m) \ar[rr]^{c_I+c_{II}\mapsto c_I\cup c_{II}}
&& H^2(\Gal_K, \wt Z_m)
}
$$
where $c_I\cup c_{II}$ coincides with the usual
cup product
$$
\cup:
H^1(\Gal_K, \wt V_I) \times
H^1(\Gal_K, \wt V_{II})
\to H^2(\Gal_K, \wt Z_m)
$$
induced from the bilinear map
$
\wt V_I \times \wt V_{II} \to \wt Z_m.
$

Since $H^2(\Gal_F, Z_m)$
is a vector subspace of $H^2(\Gal_K, \wt Z_m)$,
a parameter
$c=c_I+c_{II}\in H^1(\Gal_F, V_m)\subset H^1(\Gal_K, \wt V_m)$
can be extended to a parameter
in $H^1(\Gal_F, U_m)$
if and only if $c_I\cup c_{II}=0$.

The reader can verify by unravelling definitions
that
\begin{itemize}
\item[(D4)]
$\j(c_I\cup c_{II}) = \j c_{II} \cup \j c_I$.
\end{itemize}

Recall that $\wt V_{I} = \Mat_{m\times(n-2m)}
=
\left(
\begin{matrix}
\wt V_{I,1}
\\ 
\dots 
\\
\wt V_{I,m}
\end{matrix}
\right)
$
where $\wt V_{I,i}\cong \Mat_{1\times (n-2m)}$ is the $i$-th row,
and
$\wt V_{II} = \Mat_{(n-2m)\times m}
=
\left(
\begin{array}{c|c|c}
    \wt V_{II,1} & \dots & \wt V_{II,m}
\end{array}
\right)
$
where $\wt V_{II,j}$ is the $j$-th column.
We have 
$$
H^1(\Gal_K, \wt V_{I, i}) \cup 
H^1(\Gal_K, \wt V_{II, j}) = 
H^2(\Gal_K, \bFp e_{i,j})
$$
where $e_{i,j}\in \wt Z_m$ is the elementary matrix with non-zero entry
at position $(i,j)$.

Because of our choice of the matrix presentation of the groups,
we have
\begin{itemize}
\item[(D5)]
$\j \wt V_{I,i} = \wt V_{II, m+1-i}$,
\item[(D6)]
    $\j \bFp e_{i,j} = \bFp e_{m+1-j, m+1-i}$;
\end{itemize}
(we remark that if we change the matrix presentation,
we can arrange it so that
$\j \wt V_{I,i} = \wt V_{II, \sigma(i)}$
for an arbitary permutation $\sigma$
by replacing $w_n$ by another permutation matrix.)

Again because of our choice of presentation of the groups,
we have spinned down
isomorphisms
$\wt V_{I,i_1} \xrightarrow{\RSw_{i_1, i_2}} \wt V_{I,i_2}$
and
$\wt V_{II,j_1} \xrightarrow{\CSw_{i_1, i_2}} \wt V_{II,j_2}$
that simply swap the corresponding rows or columns
in the matrix presentation.

\subsection{Inner product / alternating form structure}
We have $\wt V_I \cong \wt V_{II}^\vee(1)$
as a $\Gal_K$-module,
and thus by local Tate duality,
for each $(i,j)$, the cup product
$$
\cup: H^1(\Gal_K,\wt V_{I,i}) \times 
H^1(\Gal_K,\wt V_{II, j}) \to
H^2(\Gal_K, \bFp e_{i,j})
$$
is a {\it non-degenerate} pairing.

We consider the following bilinear form
\begin{align*}
H^1(\Gal_K,\wt V_{I,1}) 
\times 
H^1(\Gal_K,\wt V_{I,1})
&\xrightarrow{}
H^2(\Gal_K, \bFp e_{1,n})
\cong \bFp e_{1,n}\\
(c_1, c_2)
    &\mapsto
(c_1\cdot c_2) e_{1,n}:=
c_1 \cup \j(c_2)
\end{align*}

\begin{lem}
\label{lem:inner}
If $G$ is either symplectic or unitary,
then $(c_1, c_2)\mapsto c_1\cdot c_2$
is a non-degenerate symmetric form;
and if $G$ is orthogonal,
then  $(c_1, c_2)\mapsto c_1\cdot c_2$
is a non-degenerate alternating form.
\end{lem}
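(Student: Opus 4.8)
The plan is to reduce the claim to the interplay between local Tate duality and the involution $\j$, using property (D4) and the non-degeneracy statement from the ``Inner product / alternating form structure'' subsection. Write $\langle c_1, c_2\rangle := c_1 \cup \j(c_2) \in H^2(\Gal_K, \bFp e_{1,n}) \cong \bFp$, so that $c_1 \cdot c_2$ is the scalar representing this cup product. Non-degeneracy of $\cup : H^1(\Gal_K, \wt V_{I,1}) \times H^1(\Gal_K, \wt V_{II,n}) \to H^2(\Gal_K, \bFp e_{1,n})$ (local Tate duality, as stated) together with the fact that $\j : H^1(\Gal_K, \wt V_{I,1}) \xrightarrow{\cong} H^1(\Gal_K, \wt V_{II,n})$ is an isomorphism (this is (D5) with $i=1$) shows immediately that $\langle -, - \rangle$ is non-degenerate. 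So the only real content is the symmetry type.

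To pin down the symmetry type, I would first record the symmetry of the \emph{ordinary} cup product pairing. For $c \in H^1(\Gal_K, \wt V_{I,1})$ and $d \in H^1(\Gal_K, \wt V_{II,n})$ we have $c \cup d = (-1)^{1\cdot 1}\, \sigma_*(d \cup c)$, where $\sigma : \wt V_{II,n} \otimes \wt V_{I,1} \to \wt V_{I,1} \otimes \wt V_{II,n}$ is the swap of tensor factors composed with the identification of both with $\bFp e_{1,n}$ via the bilinear map $\wt V_I \times \wt V_{II} \to \wt Z_m$; the sign $(-1)^{1\cdot1} = -1$ is the Koszul sign from graded-commutativity of cup product in degree $1$. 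Next, apply (D4): $\j(c_I \cup c_{II}) = \j c_{II} \cup \j c_I$. Taking $c_I = c_1 \in H^1(\Gal_K, \wt V_{I,1})$ and $c_{II} = \j(c_2)$, so that $c_I \cup c_{II} = \langle c_1, c_2\rangle e_{1,n}$, and using $\j^2 = 1$, gives $\j(\langle c_1, c_2\rangle e_{1,n}) = c_2 \cup \j(c_1) = \langle c_2, c_1 \rangle e_{1,n}$. Now I would compute how $\j$ acts on the one-dimensional space $H^2(\Gal_K, \bFp e_{1,n})$: by (D6), $\j \bFp e_{1,n} = \bFp e_{n,1}$, so $\j e_{1,n}$ is a scalar multiple $\epsilon\, e_{1,n}$ only after re-expressing things inside $\wt Z_m^\Delta$; more precisely the relevant sign $\epsilon$ is exactly the sign appearing in the defining formula for the $\j$-action on $z \in \Mat_{m\times m}$, namely $\j z = w_m z^t w_m$ (symplectic/unitary, $\epsilon = +1$ on the anti-diagonal entry $e_{1,n}$... wait, one must be careful: $w_m z^t w_m$ sends $e_{1,n}$-type contributions to $e_{n,1}$ but the \emph{self-paired} scalar is governed by whether the induced form on the line is $+$ or $-$) versus $\j z = -w_m z^t w_m$ (orthogonal, extra $-1$).

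Combining the Koszul sign $-1$ from graded-commutativity with the sign $\epsilon$ coming from the $\j$-action on $\wt Z_m$ yields $\langle c_2, c_1\rangle = (-1)\cdot \epsilon \cdot \langle c_1, c_2\rangle = -\epsilon \langle c_1, c_2\rangle$. For symplectic and unitary groups the $\j$-action on $\wt Z_m$ carries the $+$ sign ($\j z = w_m z^t w_m$, giving $\epsilon = -1$ after accounting for the transpose-vs-anti-transpose bookkeeping on the line $\bFp e_{1,n}$), so $\langle c_2,c_1\rangle = \langle c_1,c_2\rangle$ and the form is symmetric; for orthogonal groups the extra $-1$ in $\j z = -w_m z^t w_m$ flips $\epsilon$, so $\langle c_2,c_1\rangle = -\langle c_1,c_2\rangle$ and the form is alternating (here one also checks $\langle c,c\rangle = 0$ directly, which is automatic from alternating plus $p \neq 2$, or can be seen from $\langle c,c \rangle = -\langle c,c\rangle$). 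I expect the main obstacle to be exactly this sign bookkeeping: one must carefully track (i) the Koszul sign in graded-commutativity of the cup product, (ii) the identification of $H^2(\Gal_K, \wt V_I \otimes \wt V_{II})$-valued pairings with scalar pairings via the chosen bilinear map into $\wt Z_m$, and (iii) the sign in the $\Delta$-action formulas \eqref{eqn:symp} and \eqref{eqn:orth}. The cleanest way to handle (ii)–(iii) uniformly is to observe that the composite $\wt V_{I,1} \otimes \wt V_{I,1} \xrightarrow{1 \otimes \j} \wt V_{I,1}\otimes \wt V_{II,n} \to \bFp e_{1,n}$ is, up to the global sign in \eqref{eqn:symp}/\eqref{eqn:orth}, the standard symmetric bilinear form $x \otimes y \mapsto x\, w'_{n-2m}\, y^t$ on $\Mat_{1\times(n-2m)}$ (symplectic/unitary) — which is symmetric because $(w'_{n-2m})^t = \pm w'_{n-2m}$ matched against the Koszul sign — whereas in the orthogonal case the relevant matrix is $w_{n-2m}$ with the opposite symmetry behavior once the extra sign is included; comparing the symmetry of this matrix form against the degree-one Koszul sign then gives the result on the nose.
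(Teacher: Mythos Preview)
Your non-degeneracy argument is fine, and your application of (D4) with $c_I = c_1$, $c_{II} = \j c_2$ to obtain $\j(c_1 \cup \j c_2) = \j^2 c_2 \cup \j c_1 = c_2 \cup \j c_1$ is exactly what the paper does. (A notational point: the relevant elementary matrix is $e_{1,m}$, the upper-right corner of $\wt Z_m = \Mat_{m\times m}$; (D6) with $(i,j)=(1,m)$ shows this line is $\j$-stable, so writing $\j e_{1,m} = \pm e_{1,m}$ makes sense.)

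The genuine gap is in your final sign count. Once (D4) has given you $\j\langle c_1, c_2\rangle = \langle c_2, c_1\rangle$, the \emph{only} remaining input is the eigenvalue $\epsilon$ of $\j$ on the line $\bFp e_{1,m}$, yielding $\langle c_2, c_1\rangle = \epsilon\,\langle c_1, c_2\rangle$. The Koszul sign from graded commutativity is already absorbed into the statement of (D4) --- that axiom swaps the order of the cup factors for you --- so multiplying by an additional $(-1)$ is double-counting. This is precisely why you are then forced to fudge the value of $\epsilon$ (claiming ``$\epsilon = -1$ after accounting for the transpose-vs-anti-transpose bookkeeping'' in the symplectic case): in reality $\epsilon$ is read off directly from the formulas for the $\j$-action on $\wt Z_m$, namely $\j e_{1,m} = w_m e_{1,m}^t w_m = w_m e_{m,1} w_m = e_{1,m}$ in the symplectic case ($\epsilon = +1$, symmetric) and $\j e_{1,m} = -w_m e_{1,m}^t w_m = -e_{1,m}$ in the orthogonal case ($\epsilon = -1$, alternating). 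The paper's proof is exactly this two-line computation; your graded-commutativity paragraph and the closing discussion of bilinear forms on $\Mat_{1\times(n-2m)}$ are superfluous and, as written, introduce an error rather than resolve one.
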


\begin{proof}
The line $\bFp e_{1,n}$
is fixed by $\j$-involution by Axiom (D6);
Indeed direct computation shows
$\j e_{1,m}=e_{1,m}$ in the symplectic case
and
$\j e_{1,m}=-e_{1,m}$ in the orthogonal and unitary case.

Orthogonal case:
by Axiom (D4),
$-c_1\cup \j c_2=\j (c_1 \cup \j c_2) 
= \j^2 c_2 \cup \j c_1 = c_2\cup \j c_1$.

Symplectic case:
by Axiom (D4),
$c_1\cup \j c_2=\j (c_1 \cup \j c_2) 
= \j^2 c_2 \cup \j c_1 = c_2\cup \j c_1$.

The unitary case is completely similar,
and we leave it as an exercise.
\end{proof}

\section{The affine cone problem II: the inner product case}

Recall 
$X^c = \{\Tr(c)\in X=H^1(\Gal_F, V_m) | c \cup \j c = 0\}$.

Say $h = \dim H^1(\Gal_K, \wt V_{I,1})$;
so $\dim X = hm$.

We assume $G$ is either unitary or symplectic.
By the Gram-Schmidt process and Lemma \ref{lem:inner},
there exists an orthonormal basis
$u_1, \dots, u_h \subset H^1(\Gal_K, \wt V_{I,1})$
such that $u_i \cup \j u_j = \delta_{i,j} e_{1,n}$.
Denote by $u_i^s:=\RSw_{1, s}(u_i)\in H^1(\Gal_K, \wt V_{I,s})$
the translation of $u_i$ to the $s$-th row.

Write $c = c_1+\RSw_{1,2}c_2+\dots+\RSw_{1,m}c_m$
where $c_i \in H^1(\Gal_K, \wt V_{I,1})$ for $i=1,\dots,m$.
In other words, 
$c_i$ is the $i$-th row of $c$.

Consider the resolution
$H^1(\Gal_F, V_m)\times \GR(k, m) \to H^1(\Gal_F, V_m)$
and set
$$
\wt X_k\subset H^1(\Gal_F, V_m)\times \GR(k, m)
$$
to be the sub-variety of pairs $(\Tr c, S)$
such that 
\begin{itemize}
\item $S\subset \wt V_I$ is a $\Gal_K$-stable subspace
of dimension $k$,
\item $c\in H^1(\Gal_K, S) 
\subset H^1(\Gal_K, \wt V_I) \xrightarrow[\cong]{\Tr}
H^1(\Gal_F, V_m)$.
\end{itemize}
Set $\wt X^c_k$ to be the pullback
of $X^c$ to $\wt X_k$.

\subsection{Step 1: base case}
The equation in $\lambda$
$$
(c_1+\lambda u_1)\cup \j 
(c_1+\lambda u_1)
=
c_1\cup \j c_1 + 2\lambda c_1\cup \j u_1
+\lambda^2 e_{1,n}
=
(|c_1|^2 + 2 c_1\cdot u_1 \lambda + \lambda^2)e_{1,n}
$$
has exactly two solutions
counted with multiplicities.

\subsection{Step 2: good configuration}
We replace $c_1$ by $c_1+ \lambda u_1$
for some $\lambda$ to
assume $c_1\cup\j c_1=0$.
Next consider indeterminants $\lambda_1, \lambda_2$
and the equation
\begin{align*}
&(c_1+\RSw_{1,2}c_2+\lambda_1 u_1^2 +\lambda_2 u_2^2)\cup \j 
(c_1+\RSw_{1,2}c_2+\lambda_1 u_1^2 +\lambda_2 u_2^2)\\
=&
\CSw_{n,n-1}(c_1\cup\j (c_2+\lambda_1 u_1 + \lambda_2u_2)) +
\\
&\RSw_{1,2}\j(c_1\cup\j (c_2+\lambda_1 u_1 + \lambda_2u_2)) +
\\
&
\RSw_{1,2}\CSw_{n,n-1}
(c_2+ \lambda_1u_1 + \lambda_2u_2)\cup \j
(c_2+ \lambda_1u_1 + \lambda_2u_2)
\end{align*}
which is equivalent to
$$
\begin{cases}
c_1 \cdot c_2 + c_1\cdot u_1\lambda_1 
+ c_1\cdot u_2\lambda_2  = 0 \\
|c_2|^2 + 2c_2\cdot u_1 \lambda_1 + 2 c_2\cdot u_2 \lambda_2
+ \lambda_1^2+\lambda_2^2=0.
\end{cases}
$$
There are two scenarios:

\begin{itemize}
\item This system of equations
has infinitely many solutions,
which happens under the 
necessary condition that
$(c_1\cdot u_1)^2+(c_2\cdot u_2)^2=0$;
the reader can, for example,
use long division to get this condition; or
\item This system of equation has finitely many solutions.
\end{itemize}

Since we have already pinned down 
the value of $c_1\cdot u_1$
to guarantee $c_1\cdot c_1=0$;
the aforementioned necessary condition for the
infinite solution scenario
pinned down $c_1\cdot u_2$
with at most $2$ choices.
So, if we exchange the role of $c_1$ and $c_2$,
then this system of equation has only finitely many solutions.
Let's make this observation more rigorous:

\begin{dfn}
We say $(c_1,\dots, c_r; u_1,\dots,u_r)$
is a {good configuration}
if the equation
$$
L(c_1,\dots,c_r; u_1, \dots, u_r; \lambda_{1\le j\le i\le r})\cup\j
L(c_1,\dots,c_r; u_1, \dots, u_r; \lambda_{1\le j\le i\le r})=0
$$
where
\begin{align*}
L(c_1,\dots,c_r; u_1, \dots, u_r;
\lambda_{1\le j\le i\le r}) :=
&c_1 + \lambda_{11} u_1+ \\
&\RSw_{1,2}(c_2+\lambda_{12} u_1 + \lambda_{22} u_2)+\dots+\\
&\RSw_{1,r}(c_r + \lambda_{r1}u_1+\dots+\lambda_{rr}u_r)
\end{align*}
has at most finitely many solutions
in $(\lambda_{ij})_{1\le j\le i\le r}$.
\end{dfn}

Our discussion in this section can be summarized as
\begin{prop}
Either $(c_1, c_2; u_1, u_2)$ is a good configuration,
    or $(c_2, c_1, u_1, u_2)$ is a good configuration.
\end{prop}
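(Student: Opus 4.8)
The plan is to make the informal argument just above the proposition fully rigorous by a direct analysis of the system

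\begin{align*}
\begin{cases}
c_1\cdot c_2 + (c_1\cdot u_1)\lambda_1 + (c_1\cdot u_2)\lambda_2 = 0 \\
|c_2|^2 + 2(c_2\cdot u_1)\lambda_1 + 2(c_2\cdot u_2)\lambda_2 + \lambda_1^2+\lambda_2^2 = 0.
\end{cases}
\end{align*}

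First I would unwind the definition of \emph{good configuration} in the case $r=2$: granting that we have already replaced $c_1$ by $c_1+\lambda u_1$ so that $c_1\cdot c_1=0$ (Step 1), the configuration $(c_1,c_2;u_1,u_2)$ fails to be good precisely when the above system has infinitely many solutions in $(\lambda_1,\lambda_2)$. So the proposition amounts to showing that the system above and the system obtained by swapping the roles of $c_1$ and $c_2$ cannot \emph{both} have infinitely many solutions.

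Next I would pin down the geometry of the solution set. The first equation is a line (or all of $\bA^2$, or empty) in the $(\lambda_1,\lambda_2)$-plane; the second is a translated circle $(\lambda_1+c_2\cdot u_1)^2+(\lambda_2+c_2\cdot u_2)^2 = (c_2\cdot u_1)^2+(c_2\cdot u_2)^2-|c_2|^2$. A line meets a circle in at most two points, so the only way to get infinitely many solutions is for the first equation to be vacuous, i.e. $c_1\cdot c_2 = c_1\cdot u_1 = c_1\cdot u_2 = 0$, and simultaneously for the circle to be the whole plane — which forces the quadratic part $\lambda_1^2+\lambda_2^2$ to vanish identically, impossible over a field, unless instead the "circle'' degenerates because we are over $\bFp$ and $\lambda_1^2+\lambda_2^2$ is a product of two linear forms $(\lambda_1+\mathfrak i\lambda_2)(\lambda_1-\mathfrak i\lambda_2)$. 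That is exactly the subtlety: over $\bFp$ the conic $\lambda_1^2+\lambda_2^2+(\text{lower order})=0$ can be a union of two lines, and if the linear equation is parallel to one of those lines we get an infinite (one-dimensional) solution set. So the honest statement is: the system has infinitely many solutions iff (after completing the square) the conic degenerates into two lines and the linear equation cuts out one of them; a short computation, essentially the "long division'' alluded to in the text, shows this forces the necessary condition $(c_1\cdot u_1)^2+(c_2\cdot u_2)^2=0$ together with $c_1\cdot u_1 = 0$ (the latter since $c_1\cdot c_1 = (c_1\cdot u_1)\cdot(\text{something})$ after Step 1 — more precisely $c_1\cdot u_1$ was pinned to make $c_1\cdot c_1=0$), hence $c_2\cdot u_2 = 0$ as well; then the symmetric condition for the swapped system would read $(c_2\cdot u_2)^2+(c_1\cdot u_1)^2 = 0$ which is the \emph{same} condition, so I should be more careful. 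The correct bookkeeping is: the swapped system is infinite iff $(c_2\cdot u_2)^2 + (c_1\cdot u_1)^2 = 0$ \emph{and} the corresponding linear equation is non-vacuous-yet-parallel, but after swapping we have \emph{not} normalized $c_2\cdot c_2 = 0$, so the degeneration analysis is genuinely different; this is where I would do the explicit elimination.

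The cleanest route, and the one I would actually write: eliminate $\lambda_2$. If $c_1\cdot u_2\neq 0$, solve the first equation for $\lambda_2$ and substitute into the second to get a single quadratic in $\lambda_1$ with leading coefficient $1 + (c_1\cdot u_1)^2/(c_1\cdot u_2)^2 \neq 0$ over $\bFp$ iff $-1$ is... no — leading coefficient is $(c_1\cdot u_2)^2 + (c_1\cdot u_1)^2$ up to scaling, which vanishes iff $(c_1\cdot u_1)^2+(c_1\cdot u_2)^2 = 0$; so \emph{generically} finitely many solutions, and infinitely many only when this leading coefficient vanishes, i.e. when $(c_1\cdot u_1)^2 = -(c_1\cdot u_2)^2$. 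Combined with $c_1\cdot u_1 = 0$ (forced by Step 1's normalization $c_1\cdot c_1 = 0$, since in the orthonormal basis $c_1\cdot c_1 = \sum_i (c_1\cdot u_i)^2$ includes $(c_1\cdot u_1)^2$ — actually $c_1\cdot c_1 = |c_1|^2$ and Step 1 set this to $0$, but that does not force $c_1\cdot u_1=0$ individually; rather Step 1 \emph{chose} the shift $\lambda u_1$, and the residual freedom is what pins $c_1\cdot u_1$). I would phrase Step 1's output precisely as: after the shift, $|c_1|^2 = 0$, and we may further record the value $\mu_1 := c_1\cdot u_1$. Then $(c_1,c_2;u_1,u_2)$ not good $\Rightarrow$ $(c_1\cdot u_1)^2+(c_1\cdot u_2)^2=0$ $\Rightarrow$ $c_1\cdot u_2$ lies in the (at most two-element) set $\{\pm\mathfrak i\,\mu_1\}$ where $\mathfrak i^2=-1$ in $\bFp$ (and if $-1$ is not a square, this set is empty and the configuration is automatically good). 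Symmetrically, $(c_2,c_1;u_1,u_2)$ not good $\Rightarrow$ $|c_1|^2=|c_2|^2$-type normalization kicks in — wait, for the swapped configuration the "Step 1'' normalization would set $|c_2|^2=0$, which we have \emph{not} done. So the honest dichotomy is cleaner than symmetry suggests: run the elimination for $(c_1,c_2;u_1,u_2)$ \emph{as is} (with $|c_1|^2=0$ already arranged), and separately for $(c_2,c_1;u_1,u_2)$ where we may \emph{also} arrange $|c_2|^2=0$ by a further harmless shift absorbed into the $\lambda$'s. Then each non-goodness forces a quadratic-leading-coefficient vanishing, and the two vanishing conditions are $(c_1\cdot u_1)^2+(c_1\cdot u_2)^2=0$ and $(c_2\cdot u_1)^2+(c_2\cdot u_2)^2=0$ respectively; I claim these cannot both hold once $c_1\cdot u_1$ has been pinned nonzero generically — no, they certainly can both hold on a closed locus. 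The resolution is that the proposition is a statement for \emph{fixed} $(c_1,c_2)$ and we only need \emph{one} of the two to be good, and the point is that when $(c_1,c_2;u_1,u_2)$ is bad, the residual constraint from Step 1 on $c_1$ is incompatible with the corresponding constraint needed to make $(c_2,c_1;u_1,u_2)$ also bad — this incompatibility is exactly the "we have already pinned down the value of $c_1\cdot u_1$'' remark in the text. So the final step is: spell out that the bad-locus equation for $(c_1,c_2;u_1,u_2)$, together with $|c_1|^2=0$, determines $c_1\cdot u_2$ up to two values, hence is a proper closed condition on $c_1$; and the bad-locus equation for the swap is a \emph{disjoint} proper condition; hence for the given, fixed $(c_1,c_2)$ at least one holds false. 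I expect the main obstacle to be precisely this last bookkeeping over $\bFp$ — keeping straight which inner products have been normalized and correctly concluding that "not both bad'' rather than accidentally proving the weaker "generically good''; the conic-degeneration-over-$\bFp$ case analysis (is $-1$ a square?) is the place where a naive characteristic-zero argument would break, and it must be handled explicitly.
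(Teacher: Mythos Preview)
Your proposal never actually completes the argument. The final move---observing that non-goodness of $(c_1,c_2;u_1,u_2)$ forces the closed condition $(c_1\cdot u_1)^2+(c_1\cdot u_2)^2=0$, that the swapped non-goodness imposes a ``disjoint'' such condition, and hence that for the given $(c_1,c_2)$ at least one must fail---is a non-sequitur. The elements $c_1,c_2$ are \emph{fixed}; a fixed point may perfectly well satisfy a proper closed condition, and you never establish any disjointness between the two bad loci. Your own closing caveat (worrying about accidentally proving only the weaker ``generically good'') names precisely the gap: what you have shows at most that both configurations are simultaneously bad only on a proper closed locus of $(c_1,c_2)$-space, which is not the proposition.

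In fact the proposition as literally stated is false, so no amount of bookkeeping will close the gap. Take $c_1$ and $c_2$ both scalar multiples of $u_1$. Then $|c_1+\lambda_{11}u_1|^2=0$ forces $c_1+\lambda_{11}u_1=0$, the cross-term equation becomes vacuous, and the remaining conic $|c_2+\lambda_{21}u_1+\lambda_{22}u_2|^2=0$ is a one-parameter family over $\bFp$; by symmetry the swapped configuration $(c_2,c_1;u_1,u_2)$ is equally bad. The paper's own discussion (``pinned down $c_1\cdot u_2$ with at most $2$ choices, so if we exchange\dots'') is a sketch that does not exclude this degenerate case either. What the paper actually \emph{uses} downstream, in Proposition~\ref{prop:color} and Corollary~\ref{cor:cone-estimate}, is only the dimension bound that the solution variety in the $\lambda$-variables has dimension at most $r(r-1)/2$---and this \emph{does} hold in the counterexample ($\dim=1=r(r-1)/2$ for $r=2$). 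Your elimination computations are the right ingredients for that dimension statement; the gap is that you aimed them at a target slightly too strong to be true.
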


\subsection{Step 3}
Now we are ready to make the general induction process.
The induction hypothesis is
$$
(c_1,\dots,c_{r-1}; u_1, \dots, u_{r-1})
$$
is a good configuration.
Define
$$
c'_i = c_i + \lambda_{i, 1} u_1 +\dots + \lambda_{i,i}u_i
~(1\le i< r)
$$
where $\{\lambda_{i,j}\}$ ($1\le j\le i < r$)
is one of the finitely many solutions
of the equation
$$
L(c_1,\dots,c_{r-1}; u_1, \dots, u_{r-1};
\lambda_{1\le j\le i< r})\cup\j
L(c_1,\dots,c_{r-1}; u_1, \dots, u_{r-1};
\lambda_{1\le j\le i< r})=0.
$$

Set $C_{ij}:= c'_i\cdot u_j$ for ease of notation.

The equation
$$
L(c_1,\dots,c_{r}; u_1, \dots, u_{r};
\lambda_{1\le j\le i\le r})\cup\j
L(c_1,\dots,c_{r}; u_1, \dots, u_{r};
\lambda_{1\le j\le i\le r})=0
$$
is expanded into the system of equations
$$
\text{($\dagger$)}
\begin{cases}
c_1'\cdot c_{r} + \lambda_{r1} C_{11} +\dots+
 \lambda_{rr} C_{1r}=0 \\
\dots \\
c_{r-1}'\cdot c_{r} + \lambda_{r1} C_{(r-1)1} +
\dots+ \lambda_{rr} C_{(r-1)r}=0 \\
|c_r|^2 + 2\lambda_{r1} C_{r1} + \dots +
 2 \lambda_{rr} C_{rr}
+ \lambda_{r1}^2+\dots+\lambda_{rr}^2=0
\end{cases}
$$
First consider the case where
the coefficient row vectors
$$
(C_{11}, \dots, C_{1r}), \dots
(C_{(r-1)1}, \dots, C_{(r-1)r})
$$
are linearly dependent.
Say $(C_{s1}, \dots, C_{sr})$
is a linear combination of previous row vectors.
By induction,
we can assume
$$
(c_1, \dots, c_{s-1}, c_{s+1}, \dots, c_{r-1}, c_r;
u_1, \dots, u_{r-1})
$$
is a good configuration up to index shuffling.
Because of the linear dependence,
unraveling the equations tells us that
$$
(c_1, \dots, c_{s-1}, c_{s+1}, \dots, c_{r-1}, c_r, c_s;
u_1, \dots, u_{r-1}, u_r)
$$
is automatically a good configuration.

Now we can assume the coefficient matrix
$$
C_{**}:=\begin{bmatrix}
C_{11} & \dots & C_{1r} \\
\dots & \dots & \dots \\
C_{(r-1)1} & \dots & C_{(r-1)r}
\end{bmatrix}
$$
has rank equal to $(r-1)$.
We can therefore do a full Gauss elimination
and convert
($\dagger$)
into a single variable quadratic equation
depending on some variable $\lambda=\lambda_{ri}$ ($1\le i\le r$).
So either ($\dagger$)
has finitely many solutions,
or the quadratic (that is, the last)
equation of ($\dagger$) can be factored
into a product of two linear equations
such that one of the linear factors
has coefficient vector
linearly dependent on
rows of the coefficient matrix $C_{**}$,
which forces
$$
|c_r|^2 + 2\lambda_{r1} C_{r1} + \dots +
 2 \lambda_{rr} C_{rr}
+ \lambda_{r1}^2+\dots+\lambda_{rr}^2
$$
to be a product of two linear factors.

\begin{lem}
\label{lem:mixed-term}
Any reducible quadratic polynomial in three variables $x, y, z$
with leading terms $x^2+y^2+z^2$
must have a mixed quadratic term.

In other words, 
any non-degenerate symmetric bilinear form
in more than $2$ variables
defines an irreducible hypersurface.
\end{lem}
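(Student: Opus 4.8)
The plan is to reduce immediately to a statement about the homogeneous quadratic part. In the polynomial ring $\bFp[x,y,z]$ the units are the nonzero constants, so a \emph{reducible} quadratic polynomial $Q$ is, up to a scalar, a product $Q=\ell\cdot\ell'$ of two polynomials of degree exactly $1$; writing $\ell=a_1x+a_2y+a_3z+a_0$ and $\ell'=b_1x+b_2y+b_3z+b_0$, the degree-$2$ part of $Q$ equals the product $(a_1x+a_2y+a_3z)(b_1x+b_2y+b_3z)$ of the linear parts. Thus it suffices to show that $x^2+y^2+z^2$ is not a product of two linear forms over $\bFp$ (the constants $a_0,b_0$ play no role).

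Next I would match coefficients. Comparing the coefficient of $x^2$ (resp.\ $y^2$, $z^2$) in $(a_1x+a_2y+a_3z)(b_1x+b_2y+b_3z)=x^2+y^2+z^2$ gives $a_1b_1=a_2b_2=a_3b_3=1$; in particular every $a_i$ and $b_i$ is nonzero and $b_i=a_i^{-1}$. Comparing the coefficient of the mixed monomial $x_ix_j$ for $i\ne j$ gives $a_ib_j+a_jb_i=0$, i.e.\ $a_ia_j^{-1}+a_ja_i^{-1}=0$, and multiplying by $a_ia_j\ne 0$ yields $a_i^2+a_j^2=0$ for every pair $i\ne j$.

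Finally I would extract the contradiction. Applying this to the pairs $(1,2)$ and $(2,3)$ gives $a_1^2=-a_2^2$ and $a_3^2=-a_2^2$, hence $a_1^2=a_3^2$; but the pair $(1,3)$ gives $a_1^2=-a_3^2$, so $2a_3^2=0$, and since $p\ne 2$ we conclude $a_3=0$, contradicting $a_3b_3=1$. The same computation works verbatim for any $n\ge 3$ variables $x_1,\dots,x_n$ with quadratic part $x_1^2+\cdots+x_n^2$, which gives the displayed reformulation: a non-degenerate symmetric bilinear form in more than $2$ variables defines an irreducible hypersurface. Conceptually this is the standard fact that a quadratic form over a field is a product of two linear forms iff its rank is $\le 2$, while $x_1^2+\cdots+x_n^2$ has rank $n$; the hypothesis of more than $2$ variables is essential, since over $\bFp$ one has $x^2+y^2=(x+\sqrt{-1}\,y)(x-\sqrt{-1}\,y)$.

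There is no real obstacle here: the argument is elementary. The only points deserving attention are that the exclusion of characteristic $2$ is genuinely used — the relation $2a_3^2=0$ is vacuous when $2=0$ — which is why the standing hypothesis $p\ne 2$ is needed, and the small bookkeeping remark that ``reducible'' forces both factors to be honestly linear rather than one of them constant.
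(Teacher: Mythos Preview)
Your argument is correct and follows essentially the same route as the paper: write a putative factorization into two linear polynomials, compare the quadratic coefficients, and derive a contradiction from the vanishing of the mixed terms together with $p\neq 2$. The only cosmetic difference is that the paper first normalizes the $x$-coefficients of both factors to $1$, which lets it conclude directly from a single mixed term ($ad+bc=2bc\neq 0$), whereas you keep all coefficients free and combine the three relations $a_i^2+a_j^2=0$; both are the same elementary computation.
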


\begin{proof}
Say $(x+ay+bz + e) (x+cy+dz+f) = x^2+y^2+z^2+$ linear part.
Expanding it we get
$$
\begin{cases}
ac=1 \\
a+c=0 \\
bd=1\\
b+d =0 \\
ad+bc=0
\end{cases}
$$
Since $a=-b$, $c=-d$,
we have $ad+bc=2bc\not=0$.
\end{proof}

The induction is now complete, and we have proved:

\begin{prop}
There exists a reordering of $c_1, \dots, c_k$
such that
$(c_1, \dots, c_k; u_1, \dots, u_k)$
is a good configuration.
\end{prop}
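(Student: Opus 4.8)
The plan is to prove the Proposition by induction on $k$, upgrading the two–element statement (``either $(c_1,c_2;u_1,u_2)$ or $(c_2,c_1;u_1,u_2)$ is a good configuration'') to arbitrary length, exactly along the lines laid out in Steps 1–3 but tracking the logical dependencies carefully. First I would set up the induction hypothesis: after a suitable reordering, $(c_1,\dots,c_{r-1};u_1,\dots,u_{r-1})$ is a good configuration, so the system $L(\dots;\lambda_{1\le j\le i<r})\cup\j L(\dots)=0$ has finitely many solutions; fix one such solution $\{\lambda_{i,j}\}_{1\le j\le i<r}$ and form the modified rows $c_i'=c_i+\sum_{j\le i}\lambda_{i,j}u_j$. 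Then the length-$r$ equation unravels into the system $(\dagger)$ in the $r$ unknowns $\lambda_{r1},\dots,\lambda_{rr}$, whose first $r-1$ equations are \emph{linear} with coefficient matrix $C_{**}=(C_{ij})=(c_i'\cdot u_j)$ and whose last equation is the quadratic $|c_r|^2+2\sum_j\lambda_{rj}C_{rj}+\sum_j\lambda_{rj}^2=0$.

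The induction step splits into two cases according to $\rank C_{**}$. If the rows of $C_{**}$ are linearly dependent, say row $s$ is a combination of the others, then the $s$-th linear equation is redundant as a constraint on the solution locus; by the (length $r-1$) induction hypothesis applied to the list with $c_s$ removed and $c_r$ appended, $(c_1,\dots,\widehat{c_s},\dots,c_{r-1},c_r;u_1,\dots,u_{r-1})$ is a good configuration, and I would check that appending $c_s,u_r$ back in only adds the now-dependent linear equation plus one more quadratic, keeping the solution set finite — hence $(c_1,\dots,\widehat{c_s},\dots,c_r,c_s;u_1,\dots,u_r)$ is a good configuration, finishing this case. If instead $C_{**}$ has full rank $r-1$, perform Gaussian elimination on the first $r-1$ equations to solve for $r-1$ of the $\lambda_{rj}$ in terms of a single remaining variable $\lambda$, and substitute into the quadratic: we obtain a one-variable quadratic in $\lambda$. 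Either it is non-degenerate (two solutions counted with multiplicity, so $(\dagger)$ has finitely many solutions and $(c_1,\dots,c_r;u_1,\dots,u_r)$ is a good configuration directly), or it degenerates — i.e., the substituted quadratic is identically zero or a perfect square giving a whole line of solutions. The point is that this degeneracy would force the original quadratic form $Q(\lambda_{r1},\dots,\lambda_{rr})=|c_r|^2+2\sum\lambda_{rj}C_{rj}+\sum\lambda_{rj}^2$ to be reducible modulo the affine-linear space cut out by the $C_{**}$-equations, with one linear factor whose homogeneous part lies in the row span of $C_{**}$; restricting, this makes $Q$ itself a product of two affine-linear factors in $\ge 2$ variables (when $r\ge 3$; the cases $r\le 2$ are the base case already handled). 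But Lemma \ref{lem:mixed-term} says a reducible quadratic with leading form $\lambda_{r1}^2+\dots+\lambda_{rr}^2$ must have a mixed term, contradiction. Hence the degenerate subcase cannot occur, and $(\dagger)$ always has finitely many solutions in some ordering.

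I would then assemble these into the inductive conclusion: in every branch we produced an ordering of $c_1,\dots,c_r$ (extending an ordering of $c_1,\dots,c_{r-1}$ from the hypothesis, possibly with one index relocated) making $(c_1,\dots,c_r;u_1,\dots,u_r)$ a good configuration. Starting from the base case ($r=1$ trivial, $r=2$ supplied by the Proposition of Step 2) and iterating up to $r=k$ gives the claim. The bookkeeping for the ``index shuffling'' should be made precise once at the start — a good configuration is a property of an \emph{ordered} tuple, and what the induction really produces is a permutation $\sigma$ of $\{1,\dots,k\}$ with $(c_{\sigma(1)},\dots,c_{\sigma(k)};u_{\sigma(1)},\dots,u_{\sigma(k)})$ good; I'd state the hypothesis in that permuted form throughout.

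The main obstacle is the rank-$(r-1)$ degenerate subcase: making airtight the reduction ``the one-variable quadratic degenerates $\Rightarrow$ $Q$ is reducible with a linear factor homogeneously in $\mathrm{rowspan}\,C_{**}$ $\Rightarrow$ $Q$ restricted to the solution affine space of the linear part is a product of two affine-linear forms in $r-1\ge 2$ variables''. One must be careful that Gaussian elimination is a linear change of coordinates (so it preserves reducibility and the shape of the leading form up to the obvious diagonalization), and that the degeneracy really does transport back to reducibility of $Q$ before the change of coordinates, so that Lemma \ref{lem:mixed-term} applies to $Q$ in its original orthonormal coordinates where the leading form is a sum of squares. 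Everything else is routine: the case split on $\rank C_{**}$, the redundancy argument in the dependent case, and the invocation of the base case are all mechanical once the degeneracy dichotomy is pinned down.
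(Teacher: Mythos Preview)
Your proposal is correct and takes essentially the same approach as the paper: induction on $r$ with base cases $r\le 2$, the rank dichotomy on $C_{**}$, the reorder-and-induct argument in the rank-deficient case, and the appeal to Lemma~\ref{lem:mixed-term} to exclude the full-rank degenerate subcase for $r\ge 3$. The paper's own presentation is about as informal as yours on the two points you flag (the index-shuffling bookkeeping and the passage from ``one-variable quadratic degenerates'' to ``$Q$ factors''), so you have accurately reconstructed its argument.
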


Indeed, our discussion can be used
to establish the following:

\begin{prop}
Let
$\lambda_{ij}$ ($1\le i, j\le k$)
be $k^2$ indeterminants.

Set $\wt c_i := c_i + \lambda_{i1}u_1 +\dots
\lambda_{ik}u_k$.
Set $\wt L:= \wt c_1 + \RSw_{1,2}\wt c_2 +
\dots+\RSw_{1,k}\wt c_k$.
Consider the equation ($\dagger$)
$$
\wt L \cup \j \wt L=0
$$
in $k^2$-variables.
Define $N$ to be the maximum number of variables 
in the system of equations ($\dagger$) that can
independently take any value in some infinite subset 
of $\bFp$
while the system remains solvable.

Then $N \le k(k-1)/2$.
\label{prop:color}
\end{prop}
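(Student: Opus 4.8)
The claim is an ``algebraic independence'' refinement of the good-configuration induction: having just shown that after reordering $(c_1,\dots,c_k;u_1,\dots,u_k)$ is a good configuration, we now want to bound the number of the $k^2$ parameters $\lambda_{ij}$ that can range freely over an infinite set while $(\dagger)$ stays solvable, and the target bound $k(k-1)/2$ is exactly the number of strictly-below-diagonal entries in a $k\times k$ matrix, i.e. the count of $\lambda_{ij}$ with $i>j$ (equivalently the ``upper-triangular minus diagonal'' count). So the plan is to identify, for each row index $i$, a single distinguished parameter among $\lambda_{i1},\dots,\lambda_{ik}$ (essentially the ``diagonal'' one, corresponding to $u_i$ in the good-configuration normalization) together with one further parameter tied to the last quadratic equation, and show these $k+?$ parameters are \emph{not} free; equivalently, each of the $k$ equations of $(\dagger)$ becomes, after the Gauss-elimination bookkeeping from Step 3, a genuine nontrivial constraint on one new variable per row, cutting the free count down from $k^2$ to at most $k^2-k-k(k-1)/2$... let me instead phrase it the way Step 3 already sets it up.

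\textbf{Key steps.} First, I would set up $(\dagger)$ exactly as in Step 3: expanding $\wt L\cup\j\wt L=0$ with $\wt c_i = c_i+\sum_j\lambda_{ij}u_j$ produces the linear equations $c_i'\cdot c_j + (\text{linear in }\lambda_{i*},\lambda_{j*})=0$ for $i\ne j$ together with the ``diagonal'' quadratic equations $|c_i|^2 + 2\sum_j\lambda_{ij}C_{ij} + \sum_j\lambda_{ij}^2 + (\text{cross terms among }\lambda_{i*})=0$ for each $i$ (here using Lemma~\ref{lem:inner} that $\cdot$ is non-degenerate symmetric, so the quadratic part in each block of variables is a sum of squares plus genuine mixed terms). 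Second, I would argue by induction on $k$, peeling off the last row: by the good-configuration property (Proposition just proved, and the finiteness built into the definition of good configuration), once $\lambda_{1*},\dots,\lambda_{(k-1)*}$ are chosen the equations in the last-row variables $\lambda_{k1},\dots,\lambda_{kk}$ form a system with finitely many solutions \emph{unless} we are in the degenerate branch; in the generic branch the $(k-1)$ linear equations plus one quadratic pin down all $k$ of the $\lambda_{k*}$, so they contribute $0$ free variables, while the induction hypothesis gives $\le (k-1)(k-2)/2$ free variables among the first $k-1$ rows, and $(k-1)(k-2)/2 \le k(k-1)/2$. Third — the genuinely delicate point — I would handle the degenerate branch, where the coefficient matrix $C_{**}$ drops rank or a quadratic factors: this is precisely the case analysis of Step 3 (rank drop $\Rightarrow$ index-shuffle into a shorter good configuration; reducibility $\Rightarrow$ contradiction with Lemma~\ref{lem:mixed-term}), and I would show that in every surviving sub-case one new variable in the peeled row is still forced, so the count never exceeds one free variable ``lost per row beyond the first'', giving the accumulated bound $k(k-1)/2 = (k-1) + (k-2) + \dots + 0$.

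\textbf{Main obstacle.} The crux is bookkeeping the ``free variable'' count \emph{across} the Gauss-elimination step rather than just asserting finiteness: Step 3 already shows solvability forces either finitely many solutions or a forced factorization ruled out by Lemma~\ref{lem:mixed-term}, but Proposition~\ref{prop:color} asks for the sharper statement that \emph{at most $k(k-1)/2$} of the $\lambda_{ij}$ can vary on an infinite set. So I expect the hard part to be proving the inductive step that each row contributes at least one forced variable even in the rank-deficient sub-cases — i.e. turning the qualitative dichotomy of Step 3 into the quantitative inequality ``$(\text{new free variables from row }r) \le r-1$'' — and in particular checking that the index-shuffling in the rank-drop case does not secretly smuggle in extra degrees of freedom. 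Once that per-row bound is in hand, summing over $r=1,\dots,k$ yields $N \le \sum_{r=1}^{k}(r-1) = k(k-1)/2$, completing the proof.
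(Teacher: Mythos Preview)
Your proposal follows essentially the same route as the paper: revisit Steps~1--3, run an induction on $k$ using the generic/rank-drop dichotomy of Step~3, and invoke Lemma~\ref{lem:mixed-term} to kill the factorization branch. So the strategy is correct.

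However, your bookkeeping is muddled and would not close as written. You oscillate between two incompatible per-row counts: in the generic branch you claim all $k$ of $\lambda_{k*}$ are forced (so $0$ free in row $k$), but then you state the target as ``at most $r-1$ free in row $r$'', which for $r=k$ allows $k-1$ free. More seriously, in the rank-drop branch the forced variables are \emph{not} in the row you are peeling off: as Step~3 shows, it is some earlier row $s$ whose entries become determined by the others, and one must swap row $s$ to the bottom. Your phrase ``one new variable in the peeled row is still forced'' does not match what actually happens, and ``at least one forced per row'' is far too weak (it would only give $N\le k^2-k$).

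The paper cleans this up with a coloring device: call $\lambda_{ij}$ \emph{black} if it cannot vary over an infinite set, \emph{white} otherwise, and show inductively that after reordering rows the entire lower-triangular block $\{\lambda_{ij}:j\le i\}$ is black. Step~1 gives one black entry in row $1$; Step~2 gives two black entries in row $2$ after a possible swap of $c_1,c_2$; Step~3 supplies the induction: if $C_{**}$ has full rank then Lemma~\ref{lem:mixed-term} forces $\lambda_{r1},\dots,\lambda_{rr}$ all black, while if $C_{**}$ drops rank then some row $(\lambda_{s1},\dots,\lambda_{sr})$ is determined by the others and one \emph{swaps colors} with black entries elsewhere to make that row fully black, then moves it to position $r$. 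This yields $\ge k(k+1)/2$ black entries, hence $N\le k^2-k(k+1)/2=k(k-1)/2$. The color-swapping move is the clean substitute for your vague ``index-shuffling does not smuggle in extra degrees of freedom''.
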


\begin{proof}
We only need to take a closer look
at the earlier discussions.

Assign a color (white - allowed to take
infinitely many values independently, and black - otherwise)
to each variable in
$$
\begin{bmatrix}
\lambda_{11} & \dots & \lambda_{1k} \\
\dots & \dots & \dots \\
\lambda_{k1} & \dots & \lambda_{kk}
\end{bmatrix}
$$

By Step 1, no row can be fully white.
Without loss of generality, we assume $\lambda_{11}$
is black.

By Step 2, the blackness of $\lambda_{11}$
implies
either $\lambda_{12}$ and one of $(\lambda_{21}, \lambda_{22})$
is black,
or both of $(\lambda_{21}, \lambda_{22})$ are black.
Without loss of generality,
we assume both of $(\lambda_{21}, \lambda_{22})$ are black
by possibly exchanging $c_1$ and $c_2$.

We inductively demonstrate by reordering indices,
it is possible to guarantee
$\lambda_{ij}$ ($1\le j\le i \le r$)
are all black.

Going back to Step 3.
If $C_{**}$ is not full rank,
then a certain row $(\lambda_{s1}, \dots, \lambda_{sr})$ is entirely determined by
the other rows;
so by swaping the color of white $\lambda_{si}$
with the color of black variables on the other rows,
we can assume without loss of generality
that all of $(\lambda_{s1}, \dots, \lambda_{sr})$
are black.
On the other hand, if $C_{**}$ is indeed full rank,
Lemma
\ref{lem:mixed-term}
implies 
all of $(\lambda_{r1}, \dots, \lambda_{rr})$
must all be black.
So, we are done.
\end{proof}

\begin{cor}
\label{cor:cone-estimate}
$\dim \wt X_k - \dim \wt X_k^c \ge \frac{k(k+1)}{2}$.
\end{cor}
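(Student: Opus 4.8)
The plan is to descend from the equation-theoretic count in Proposition \ref{prop:color} to a geometric statement about the cone $\wt X_k^c$ inside $\wt X_k$. First I would make explicit the parametrization: a point of $\wt X_k$ is a pair $(\Tr c, S)$ with $S\subset \wt V_I$ a $\Gal_K$-stable subspace of dimension $k$ and $c\in H^1(\Gal_K,S)$. Using the orthonormal basis $u_1,\dots,u_h$ of $H^1(\Gal_K,\wt V_{I,1})$ from Lemma \ref{lem:inner} and the row-translations $u_i^s=\RSw_{1,s}(u_i)$, I would fix, over a suitable stratum of $\GR(k,m)$, a coordinate system in which $c$ is written as $c_1 + \RSw_{1,2}c_2+\dots+\RSw_{1,k}c_k$ with $c_i\in H^1(\Gal_K,\wt V_{I,1})$, exactly as in Step 3. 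The fibre of $\wt X_k\to\GR(k,m)$ has dimension $hk$, and I want to show the fibre of $\wt X_k^c$ has dimension at most $hk - \tfrac{k(k+1)}{2}$; integrating over the base $\GR(k,m)$ (which is irreducible) then yields the claimed inequality on total dimensions.

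The core step is to realize the $\lambda_{ij}$ of Proposition \ref{prop:color} as genuine coordinates displacing a point $c$ along the fibre. Given any $c$ lying in $\wt X_k^c$, i.e.\ satisfying $c\cup\j c=0$, the translated family $\wt c_i := c_i + \lambda_{i1}u_1+\dots+\lambda_{ik}u_k$ and $\wt L := \wt c_1+\RSw_{1,2}\wt c_2+\dots+\RSw_{1,k}\wt c_k$ sweeps out (as the $\lambda_{ij}$ vary) an affine subspace of the fibre of dimension $k^2$ — because $u_1,\dots,u_k$ are linearly independent in each $H^1(\Gal_K,\wt V_{I,s})$ — and the locus inside it cut out by $(\dagger)$, namely $\wt L\cup\j\wt L=0$, is precisely $\wt X_k^c$ intersected with this $k^2$-plane. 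By Proposition \ref{prop:color}, after reordering the $c_i$, the number $N$ of $\lambda_{ij}$ that can range over an infinite set while $(\dagger)$ stays solvable is at most $k(k-1)/2$; equivalently the projection of this solution locus to the $\lambda$-coordinates has dimension $\le k(k-1)/2$, so the solution locus itself — an algebraic subset of the $k^2$-plane — has dimension $\le k(k-1)/2$. Hence through the arbitrary point $c$ the fibre of $\wt X_k^c$ meets a $k^2$-plane of the $hk$-dimensional fibre in dimension $\le k(k-1)/2$; since the $k^2$-plane contributes $k^2$ of the $hk$ coordinates, the fibre of $\wt X_k^c$ has dimension at most $(hk-k^2) + k(k-1)/2 = hk - k^2 + k(k-1)/2 = hk - \tfrac{k(k+1)}{2}$.

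The main obstacle I anticipate is the last counting bookkeeping: turning ``$N\le k(k-1)/2$ variables can move freely'' into a bound on the \emph{dimension} of the solution set of $(\dagger)$, rather than just on the size of a coordinate projection. One clean way around this is to note that $(\dagger)$ consists of $r$ equations in which, after the Gauss elimination of Step 3, the first $k-1$ equations are affine-linear in $(\lambda_{k1},\dots,\lambda_{kk})$ with coefficient matrix $C_{**}$ of rank $k-1$, so they cut the dimension of the last row of $\lambda$'s by exactly $k-1$, leaving one free parameter there, which the final quadratic then constrains — and inducting on $r$ shows the full solution set of $(\dagger)$ has dimension $\le \sum_{r=1}^{k}(r-1) = k(k-1)/2$, with the induction step being exactly the black/white colouring argument already run in Proposition \ref{prop:color}. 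A secondary, purely technical point is the stratification of $\GR(k,m)$ needed so that the coordinates $c_i$ and the row-swaps $\RSw_{1,s}$ are globally defined on each piece; since $\GR(k,m)$ is covered by affine charts on which a basis of $S$ varies algebraically, this is routine, and the dimension count is semicontinuous, so it suffices to bound the fibre dimension on a dense open stratum and add $\dim\GR(k,m)$ to both sides.
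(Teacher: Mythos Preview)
Your proposal is correct and follows essentially the same approach as the paper. The paper's proof defines $Y$ as the orthogonal complement of $\{u_i^j : 1\le i,j\le k\}$ in $X$ and bounds the fibers of the projection $\wt X_k^c \to Y \times \GR(k,m)$ by $k(k-1)/2$ via Proposition~\ref{prop:color}; your fiberwise analysis over $\GR(k,m)$, decomposing each $c_i$ into its component along $\Span(u_1,\dots,u_k)$ and its orthogonal part, is exactly this projection viewed chart-by-chart. Your identification of the subtlety in passing from ``$N\le k(k-1)/2$'' to a genuine dimension bound on the solution set of $(\dagger)$ is well taken---the paper's proof glosses over this step---and your proposed resolution via the inductive equation-count (deferring to the coloring argument for the non-full-rank reordering) is adequate, though one can also argue directly that a $d$-dimensional constructible set in $\bA^{k^2}$ admits a dominant coordinate projection to $\bA^d$, whose image then contains a product of infinite sets.
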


\begin{proof}
Let $Y$ be the orthogonal complement
of $u_i^j$ in $X$, $1\le i, j\le k$.
$\dim Y = \dim X - k^2$.

Since $c\in H^1(\Gal_F, S)$
and $\dim S=k$,
we can assume $c_{k+1}=c_{k+2}=\dots=c_m=0$.
By Proposition \ref{prop:color},
    the projection $\wt X_k^c \to Y \times \GR(k, m)$
has fibers of dimension at most $k(k-1)/2$.
So we are done; also see Lemma \ref{lem:fiber}.
\end{proof}

The following is obvious.
\begin{lem}
\label{lem:wtXk}
$\dim \wt X_k \le \dim \GR(k, m) + kh=k(m-k)+kh$.
\end{lem}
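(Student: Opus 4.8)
The plan is to directly analyze the two projections defining $\wt X_k$ and bound the dimension by controlling the fibers of the forgetful map to the Grassmannian. First I would consider the projection
$$
\pi: \wt X_k \to \GR(k, m), \qquad (\Tr c, S) \mapsto S,
$$
so that it suffices to bound the dimension of each fiber $\pi^{-1}(S)$ by $kh$ and then add $\dim \GR(k,m) = k(m-k)$. Fix a $\Gal_K$-stable subspace $S \subset \wt V_I$ of dimension $k$. By definition, the fiber over $S$ consists of those $(\Tr c, S)$ with $c \in H^1(\Gal_K, S)$, so the fiber is (isomorphic, via $\Tr$, to) the image of $H^1(\Gal_K, S)$ inside $H^1(\Gal_F, V_m)$. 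Hence I only need an upper bound $\dim H^1(\Gal_K, S) \le kh$.

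The key step is to identify $H^1(\Gal_K, S)$ with a subspace of $H^1(\Gal_K, \wt V_I)$ and count dimensions rowwise. Recall the decomposition $\wt V_I = \wt V_{I,1} \oplus \dots \oplus \wt V_{I,m}$ into rows, with each $\wt V_{I,i} \cong \Mat_{1\times(n-2m)}$, and recall $h = \dim H^1(\Gal_K, \wt V_{I,1})$; since the row-swap isomorphisms $\RSw_{i_1,i_2}$ identify all the $\wt V_{I,i}$ as $\Gal_K$-modules, we get $\dim H^1(\Gal_K, \wt V_I) = mh$. Since $S \hookrightarrow \wt V_I$ is a $\Gal_K$-stable subspace, the induced map $H^1(\Gal_K, S) \to H^1(\Gal_K, \wt V_I)$ need not be injective in general, but here $S$ is a subspace of the $\Gal_K$-module $\wt V_I \cong \wt V_{I,1}^{\oplus m}$, which is a direct sum of copies of a single module; any $\Gal_K$-stable subspace of such a module is again a direct sum of copies of $\wt V_{I,1}$ up to isomorphism (it is the image of $\wt V_{I,1} \otimes_{\bFp} W$ for a linear subspace $W \subset \bFp^m$ of dimension $k$), so $H^1(\Gal_K, S) \cong H^1(\Gal_K, \wt V_{I,1})^{\oplus k}$ and thus $\dim H^1(\Gal_K, S) = kh$.

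Combining, $\dim \wt X_k \le \dim \GR(k,m) + kh = k(m-k) + kh$, as claimed. I expect the only mildly delicate point to be the identification of the $\Gal_K$-submodule structure of $S$ — i.e.\ that a $\Gal_K$-stable $k$-dimensional subspace of $\wt V_I \cong \wt V_{I,1}^{\oplus m}$ is $\Gal_K$-isomorphic to $\wt V_{I,1}^{\oplus k}$, or at least that its $H^1$ has dimension $\le kh$ — but this follows immediately once one observes that such an $S$ corresponds to a $k$-dimensional $\bFp$-linear subspace of the multiplicity space $\bFp^m$ (because $\Gal_K$ acts only through the $\wt V_{I,1}$ factor and trivially on the multiplicity coordinates), whence $S \cong \wt V_{I,1} \otimes_{\bFp} \bFp^k$ as $\Gal_K$-modules and $H^1(\Gal_K, S) \cong H^1(\Gal_K, \wt V_{I,1}) \otimes_{\bFp} \bFp^k$. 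Everything else is bookkeeping, which is why the lemma is labelled obvious.
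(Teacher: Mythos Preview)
Your argument is correct and matches the paper's approach exactly: project $\wt X_k$ to $\GR(k,m)$ and observe that each fiber is (the image under $\Tr$ of) $H^1(\Gal_K,S)$, which has dimension $kh$ since $S\cong \wt V_{I,1}^{\oplus k}$. The paper phrases this as ``$\wt X_k$ can be Zariski locally embedded in $\GR(k,m)\times H^1(\Gal_F,S)$ and $\dim H^1(\Gal_F,S)=kh$,'' which is exactly your fiber computation in slightly more compressed form; your added remark that $S$ is a direct summand of $\wt V_I$ (as a $\Gal_K$-module) because it corresponds to a linear subspace of the multiplicity space $\bFp^m$ is the precise justification behind the paper's one-line assertion.
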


\begin{proof}
$\wt X_k$
can be Zariski locally embedded in
$\GR(k, m) \times H^1(\Gal_F, S)$
and $\dim H^1(\Gal_F, S)= kh$.
\end{proof}

\begin{thm}[The cone theorem, I]
\label{thm:cone}
We have
$$
\dim X^c \le
\max_{0\le k \le m, h}
k(m-k) + kh - \frac{k(k+1)}{2}.
$$
Equivalently,
$$
\dim X - \dim X^c \ge
\min_{0\le k \le m, h}
(h-k)(m-k) + \frac{k(k+1)}{2}.
$$
\end{thm}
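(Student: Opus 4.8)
The plan is to combine the stratification $X^c = \bigcup_{k} \mathrm{Im}(\wt X^c_k \to X)$ with the dimension bounds already established. First I would observe that every point of $X^c$ lies in the image of some $\wt X^c_k$: given $\Tr(c)\in X^c$ with $c\cup\j c = 0$, let $S\subseteq \wt V_I$ be the $\Gal_K$-stable span of the rows of $c$ (after spinning everything back to the first row via $\RSw$), say of dimension $k\le m$; then $(\Tr c, S)\in \wt X^c_k$ maps to $\Tr(c)$. Hence
$$
\dim X^c \le \max_{0\le k\le m} \dim \wt X^c_k.
$$
By Corollary \ref{cor:cone-estimate}, $\dim \wt X^c_k \le \dim \wt X_k - \frac{k(k+1)}{2}$, and by Lemma \ref{lem:wtXk}, $\dim \wt X_k \le k(m-k) + kh$. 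Chaining these gives the first displayed inequality of the theorem; the second is just the arithmetic identity $\dim X - \big(k(m-k)+kh-\tfrac{k(k+1)}{2}\big) = (h-k)(m-k) + \tfrac{k(k+1)}{2}$, using $\dim X = hm$, followed by converting $\max$ into $\min$.

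Next I would spell out the one routine but necessary point hidden in Corollary \ref{cor:cone-estimate}: the bound there is phrased via the projection $\wt X^c_k \to Y\times\GR(k,m)$ having fibers of dimension at most $k(k-1)/2$, so $\dim \wt X^c_k \le \dim Y + \dim\GR(k,m) + k(k-1)/2 = (\dim X - k^2) + k(m-k) + k(k-1)/2 = \dim X - k^2 + k(m-k) + k(k-1)/2$. I should double-check this matches $\dim\wt X_k - k(k+1)/2$: indeed $\dim\wt X_k \le k(m-k)+kh$ and $\dim X = hm$, but more cleanly, within the fibered picture $\wt X_k$ itself maps to $Y\times\GR(k,m)$ with fibers of dimension $k^2$ (the choices of $\lambda_{ij}$, which are free inside $\wt X_k$), so $\dim\wt X_k^c \le \dim\wt X_k - (k^2 - k(k-1)/2) = \dim\wt X_k - k(k+1)/2$, exactly as needed. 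This is the content already asserted; I would just make the fiber-dimension comparison (the promised Lemma \ref{lem:fiber}) explicit.

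The genuinely substantive input — the main obstacle, already resolved in the excerpt — is Proposition \ref{prop:color}, i.e. that among the $k^2$ indeterminates $\lambda_{ij}$ the solvable locus of $(\dagger)$ has at most $k(k-1)/2$ "white" coordinates. Everything in the theorem is a bookkeeping repackaging of that combinatorial fact together with $\dim\GR(k,m) = k(m-k)$ and $\dim X = hm$. So in the write-up I would state the stratification, invoke Lemma \ref{lem:wtXk} and Corollary \ref{cor:cone-estimate}, take the maximum over $k$, and finish with the elementary algebraic manipulation turning the upper bound on $\dim X^c$ into the lower bound on $\dim X - \dim X^c$; the only care needed is to include $h$ among the parameters of the $\max/\min$ (since $h = \dim H^1(\Gal_K,\wt V_{I,1})$ varies with the choice of $\tau,\alpha$) and to note that $k$ ranges only up to $\min(m,h)$ because $\wt X_k$ is empty — or at least contributes nothing — once $k>h$, though writing the range as $0\le k\le m$ is harmless since the bound is still valid (when $k>h$ the relevant $\Gal_K$-stable subspaces $S$ carrying a nonzero class are constrained, and the inequality only weakens).
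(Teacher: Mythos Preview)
Your proposal is correct and follows the same approach as the paper's proof, which simply says ``combine Corollary \ref{cor:cone-estimate} and Lemma \ref{lem:wtXk}.'' You have spelled out the one step the paper leaves implicit, namely that $X^c$ is covered by the images of the $\wt X^c_k$ (via the rank-of-$c$ stratification), so that $\dim X^c \le \max_k \dim \wt X^c_k$; after that, chaining the two cited results and the identity $\dim X = hm$ gives both displayed inequalities exactly as you say. One small clarification: in the notation ``$0\le k\le m,h$'' the symbol $h$ is not a variable of the $\max$/$\min$ but a fixed upper bound on $k$ (i.e.\ $k\le\min(m,h)$), which you eventually parse correctly.
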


\begin{proof}
Simply combine Corollary \ref{cor:cone-estimate}
and Lemma \ref{lem:wtXk}.
\end{proof}

Set $\delta(m, h;k):=
(h-k)(m-k) + \frac{k(k+1)}{2}$ ($0\le k\le m,h$).
The critical point of $\delta(m,h;k)$
is $k=\frac{m+h}{3}-1/6$.
We temporarily suppose $m\le h$
because of the symmetry of $m$ and $h$
in the expression of $\delta(m,h;k)$.
We have
$$
\delta(m, h;k)_{\min}
\ge
\begin{cases}
\delta(m, h;\frac{m+h}{3}-1/6) & m \ge h/2-1/4 \\
\delta(m, h; m) & m \le h/2-1/4
\end{cases}
$$
The partial derivative
$$
\frac{\partial}{\partial h}\delta(m, h;\frac{m+h}{3}-1/6)
=-h/3+2m/3+1/6\ge 0,
$$
and thus
$\delta(m, h;\frac{m+h}{3}-1/6)$
attains minimum when $m=h$
and the minimum value is
$$
\frac{1}{3}(m^2+m)-1/24.
$$
but since $\delta(m,h;k)$ is an integer,
its minimum value is at least $(m^2+m)/3$.
On the other hand
$D(m,h;m) = (m^2+m)/2$.
Combining both cases, we have

\begin{cor}
\label{cor:cone}
(1) We have
$$
\dim X - \dim X^c \ge
\frac{1}{3}(m^2+m).
$$

(2)
We have
$
\dim X - \dim X^c \ge
m.
$
\end{cor}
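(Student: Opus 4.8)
The statement to prove is Corollary \ref{cor:cone}, which extracts two concrete numerical lower bounds on $\dim X - \dim X^c$ from the cone theorem (Theorem \ref{thm:cone}). Let me sketch the proof plan.

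\medskip

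The plan is to start from the inequality
$$
\dim X - \dim X^c \ge
\min_{0\le k \le m, h}
\delta(m,h;k), \qquad \delta(m,h;k):=(h-k)(m-k) + \tfrac{k(k+1)}{2},
$$
furnished by Theorem \ref{thm:cone}, and simply perform the optimization over the integer parameter $k\in[0,\min(m,h)]$. First I would exploit the symmetry of $\delta(m,h;k)$ under swapping $m\leftrightarrow h$ (the bilinear term $(h-k)(m-k)$ is symmetric and the quadratic term $\tfrac{k(k+1)}2$ does not involve $m,h$), which lets me assume without loss of generality that $m\le h$. Then I would treat $k$ as a real variable, compute $\partial_k\delta = -(m-k)-(h-k)+k+\tfrac12 = 3k-m-h+\tfrac12$, so the unique critical point is $k^\ast=\tfrac{m+h}{3}-\tfrac16$, and $\delta$ is convex in $k$; hence the minimum over the interval $[0,m]$ is attained either at $k^\ast$ (when $k^\ast\le m$, i.e. $m\ge h/2-1/4$) or at the endpoint $k=m$ (when $k^\ast\ge m$).

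\medskip

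For part (1): in the interior-minimum regime I would substitute $k=k^\ast$ into $\delta$, obtaining an expression in $m,h$, then minimize that over $h\ge m$. The displayed computation in the excerpt shows $\partial_h\delta(m,h;k^\ast) = -h/3+2m/3+1/6$, which is $\ge 0$ on the range $h\ge m$ (since there $-h/3+2m/3+1/6\ge -m/3+2m/3+1/6=m/3+1/6>0$), so the minimum in this regime occurs at $h=m$, giving $\tfrac13(m^2+m)-\tfrac1{24}$; rounding up to the nearest integer (as $\delta$ takes integer values) yields $\ge\tfrac13(m^2+m)$. In the endpoint regime $k=m$ one has $\delta(m,h;m)=0\cdot(h-m)+\tfrac{m(m+1)}2=\tfrac12(m^2+m)\ge\tfrac13(m^2+m)$. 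Combining the two regimes gives $\dim X-\dim X^c\ge\tfrac13(m^2+m)$, which is (1). For part (2): since $\tfrac13(m^2+m)=\tfrac{m(m+1)}3\ge m$ precisely when $m+1\ge 3$, i.e. $m\ge 2$, part (2) follows from part (1) for $m\ge 2$; the remaining cases $m=0$ (where $X^c=X$ is empty of constraints, or the bound $0\ge 0$ is trivial) and $m=1$ (where $\tfrac13(m^2+m)=\tfrac23$ rounds up to $1=m$, or one checks directly $\delta(1,h;0)=h\ge 1$ when $h\ge 1$ and $\delta(1,h;1)=1$) are handled by hand.

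\medskip

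I do not expect any serious obstacle here: the corollary is a routine, if slightly fiddly, calculus-plus-integrality optimization of an explicit two-parameter family of quadratics, and all the analytic content (the critical point, the sign of $\partial_h$, the case split) is already laid out in the paragraph preceding the corollary in the excerpt. The only point requiring a little care is the integrality rounding — one must remember that $\delta(m,h;k)$ is a sum of products of integers hence an integer, so a real-valued lower bound of $\tfrac13(m^2+m)-\tfrac1{24}$ upgrades to $\lceil \tfrac13(m^2+m)-\tfrac1{24}\rceil = \tfrac13(m^2+m)$ when $\tfrac13(m^2+m)$ is itself an integer, and more generally to $\lceil\tfrac13(m^2+m)\rceil\ge\tfrac13(m^2+m)$ — and the small-$m$ boundary cases for part (2), both of which are immediate.
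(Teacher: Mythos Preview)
Your approach is exactly the one the paper takes (indeed the computation is spelled out in the paragraph immediately preceding the corollary, and the paper's proof just says ``(1) is already proved; (2) follows from (1)''). However, there is a sign slip in your justification of the interior-minimum case that makes the argument as written incorrect.

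You claim that $\partial_h\delta(m,h;k^\ast)=-h/3+2m/3+1/6\ge 0$ for all $h\ge m$, arguing ``since there $-h/3+2m/3+1/6\ge -m/3+2m/3+1/6$''. But the inequality goes the other way: $h\ge m$ gives $-h/3\le -m/3$, so this only shows $\partial_h\delta\le m/3+1/6$, which is useless. In fact $\partial_h\delta$ becomes negative once $h>2m+1/2$, and $\delta(m,h;k^\ast)\to-\infty$ as $h\to\infty$, so minimizing $\delta(m,h;k^\ast)$ over \emph{all} $h\ge m$ gives no bound at all. The fix is to remember that the interior-minimum regime is precisely $k^\ast\le m$, i.e.\ $h\le 2m+1/2$; on that bounded range one has $\partial_h\delta=-h/3+2m/3+1/6\ge -(2m+1/2)/3+2m/3+1/6=0$, so the minimum over $h$ in this regime is indeed attained at $h=m$. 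With this correction your argument goes through and matches the paper's.
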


\begin{proof}
(1) is already proved.
(2) follows from (1).
\end{proof}

When $m=2$, we do need a more accurate estimate:

\begin{cor}
\label{cor:cone-m2}
If $m=2$,
then $\dim X-\dim X^c \ge \min(3, 1+\dim V_m/m)$.
\end{cor}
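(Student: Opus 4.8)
Write $d := \dim V_m / m$ and recall $\dim X = mh = 2h$, where $h = \dim H^1(\Gal_K, \wt V_{I,1})$. The local Euler characteristic formula gives $h \ge [K:\bQ_p]\cdot\dim_{\bFp}\wt V_{I,1}$, and unwinding the definition of $V_m$ this forces $h \ge d$, with equality only when $[K:\bQ_p]=1$ and $H^0(\Gal_K,\wt V_{I,1}) = H^2(\Gal_K,\wt V_{I,1}) = 0$. The plan is to observe that Theorem~\ref{thm:cone} already yields the bound except when $h = d$ and $h \le 2$, and then to treat these two residual cases directly; the case $h = d = 2$ will be the crux.

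\textbf{Reduction to small $h$.} If $h \ge 3$, then Theorem~\ref{thm:cone} gives $\dim X - \dim X^c \ge \min_{0\le k\le 2}\bigl((h-k)(2-k) + \tfrac{k(k+1)}{2}\bigr) = \min(2h, h, 3) = 3 \ge \min(3, 1+d)$. If $h \le 2$ but $h \ge d+1$, the same theorem (its $k=0,1$ terms) gives $\dim X - \dim X^c \ge \min(h,3) = h \ge d+1 \ge \min(3,1+d)$. So from now on assume $h = d \in \{1,2\}$. Recall from Section~4 that, for $m=2$, $X^c = \{\Tr(c) : |c_1|^2 = |c_2|^2 = c_1\cdot c_2 = 0\}$, where $c = c_1 + \RSw_{1,2}c_2$ with $c_i \in H^1(\Gal_K,\wt V_{I,1})$.

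\textbf{The case $h = d = 1$.} Here $\min(3,1+d) = 2$, so we must show $\dim X^c = 0$. By Lemma~\ref{lem:inner} the symmetric form $(c_1,c_2)\mapsto c_1\cdot c_2$ on the one-dimensional space $H^1(\Gal_K,\wt V_{I,1})$ is non-degenerate, hence anisotropic; so $|c_i|^2 = 0$ forces $c_i = 0$, and $X^c = \{\Tr(0)\}$. Thus $\dim X - \dim X^c = 2$, as required.

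\textbf{The case $h = d = 2$: the main obstacle.} Now $\dim X = 4$, and Theorem~\ref{thm:cone} only gives $\dim X^c \le 2$; the task is to improve this to $\dim X^c \le 1$. I would run the Grassmannian stratification $X^c = \bigsqcup_{k=0}^{2} X^{c,(k)}$ underlying the proof of Theorem~\ref{thm:cone}, where $X^{c,(k)}$ is the locus on which $c_1, c_2$ span a $k$-dimensional subspace of $H^1(\Gal_K,\wt V_{I,1})$. The stratum $k=0$ is the single point $\Tr(0)$; by Corollary~\ref{cor:cone-estimate} with $k=2$, the stratum $X^{c,(2)}$ has dimension $\le 2h - 3 = 1$. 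Everything hinges on the stratum $k=1$: there $(c_1,c_2)$ spans a line $\bFp v\subset H^1(\Gal_K,\wt V_{I,1})$, so (after the row-translations $\RSw$) the class $c$ is the image of a class in $H^1(\Gal_K, S)$ for a $\Gal_K$-stable line $S\subset\wt V_I$, and the obstruction $c\cup\j c = 0$ collapses to the single equation $|v|^2 = 0$. The crude bound $\dim X^{c,(1)} \le \dim\GR(1,2) + (h-1) = 1 + 1 = 2$ is exactly one unit too weak, and the genuinely hard point of the corollary is to remove this last unit: one must show that the isotropic, ``row-aligned'' classes satisfying $c\cup\j c = 0$ cannot vary in a two-parameter family. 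I expect this to come from exploiting the $\Gal_K$-module structure of $\wt V_{I,1}$ beyond the bare form on $H^1$ --- specifically its self-duality $\wt V_{I,1}\cong\wt V_{I,1}^\vee(1)$, the axioms (D4)--(D6), and local Tate duality --- fed into the ``good configuration'' analysis of Steps~1--3 one order beyond Proposition~\ref{prop:color}; the outcome should be $\dim X^{c,(1)}\le 1$, hence $\dim X^c \le 1$ and $\dim X - \dim X^c \ge 3 = \min(3,1+d)$.
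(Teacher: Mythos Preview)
Your approach has a genuine gap in the case $h=d=2$, which you yourself flag as incomplete. Worse, the bound you are hoping for there --- $\dim X^{c,(1)}\le 1$ --- is actually \emph{false}. When $h=2$, the non-degenerate symmetric form on the $2$-dimensional space $H^1(\Gal_K,\wt V_{I,1})$ has exactly two isotropic lines (we are over $\bFp$), and for each such line $L$ the entire plane $\{(c_1,c_2):c_1,c_2\in L\}\cong\bFp^2$ lies inside $X^c$, since all three conditions $|c_1|^2=|c_2|^2=c_1\cdot c_2=0$ reduce to the single isotropy condition on $L$. Thus $\dim X^{c,(1)}=2$, and no refinement of the good-configuration analysis or of Proposition~\ref{prop:color} can bring it down to $1$. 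If the case $h=d=2$ genuinely occurred, the corollary would fail there with $\dim X-\dim X^c=2<3$; your strategy of attacking this case head-on is therefore doomed.

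The paper's proof sidesteps this entirely. It simply enumerates $k=0,1,2$ in Theorem~\ref{thm:cone} to obtain $\dim X-\dim X^c\ge\min(2h,h,3)$, and then asserts the single inequality $h\ge 1+\dim V_m/m$ from the Euler characteristic. That inequality is the whole content of the corollary beyond Theorem~\ref{thm:cone}: it rules out $h=d$ and makes your ``main obstacle'' vacuous. Your energy should go into verifying \emph{that} step --- i.e.\ checking that in the standing setup (in particular the assumption $H^2(\Gal_F,\wt Z_m)\ne 0$ forcing $\alpha=\beta\chi_\cyc$) one always has $\dim H^0(\Gal_K,\wt V_{I,1})+\dim H^2(\Gal_K,\wt V_{I,1})\ge 1$ or $[K:\bQ_p]\ge 2$ --- rather than into the $k=1$ stratum.
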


\begin{proof}
Enumerating the cases $k=0,1,2$,
we get
$\dim X-\dim X^c \ge \min(3, h, 2h)$.
But $h\ge 1+\dim V_m/m$ by Euler characteristic.
\end{proof}

\section{Reduced Emerton-Gee stacks}

In this part, we establish some useful results for computing the dimension
of the reduced Emerton-Gee stacks.
Write $\cX_{\lsup LM}=\cX_{\lsup LM, F}$
for the {\it reduced} Emerton-Gee stacks
over $\bFp$.
By \cite{L23B}, all reduced Emerton-Gee stacks
are of finite type,
and their $\bFp$-points
form a groupoid
equivalent to the groupoid of Langlands parameters
with $\bFp$-coefficients
-- and that's basically all we need to know
about the Emerton-Gee stacks.

We suppress subscripts, and set
$G = G_n,~
P = P_m,\text{~and,~}
M = M_m$.

Since $M_m = \Res_{K/F}\GL_m\times G_{n-2m}$,
there is a morphism
$\cX_{\lsup LM} \to \cX_{\lsup L\Res_{K/F}\GL_m}$.
Denote by $\cX_{\lsup LM}'\subset \cX_{\lsup LM}$
the closed substack
whose image in $\cX_{\lsup L\Res_{K/F}\GL_m}$
consists of irreducible Galois representations.

Denote by $\vec{h}=(v_0,v_1,v_2,z_0,z_1,z_2)$
a tuple of 6 integers.
The condition
\begin{eqnarray*}
\dim H^i(\Gal_{F}, Z)&=z_i \\
\dim H^i(\Gal_{F}, V)&=v_i
\end{eqnarray*}
cuts a locally closed substack 
$\cX_{\lsup LM}^{\vec h}$
of $\cX_{\lsup LM}'$
because Galois cohomology can be algebraically interpolated
by the Herr complexes;
and by the semicontinuity theorem,
the locus where a perfect complex
has fixed Poincar\'e polynomial is locally closed.

\begin{lem}
\label{lem:fiber}
Let $\cX\to\cY$ be a surjective morphism
of algebraic stacks of finite type over $\bFp$.

(1)
If for all $x:\spec \bFp\to \cY$,
the fiber $\cX\times_{\cY, x}\spec \bFp$
has dimension at most $d$,
then $\dim \cX \le \dim \cY + d$.

(2)
If for all $x:\spec \bFp\to \cY$,
the fiber $\cX\times_{\cY, x}\spec \bFp$
has dimension at least $d$,
then $\dim \cX \ge \dim \cY + d$.
\end{lem}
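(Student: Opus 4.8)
The plan is to reduce both assertions to the corresponding statements for a morphism of finite-type schemes over $\bFp$ by passing to smooth atlases. I would first choose a smooth surjective morphism $Y\to\cY$ with $Y$ a scheme of finite type over $\bFp$, and then a smooth surjective morphism $X\to\cX\times_\cY Y$ with $X$ a scheme of finite type over $\bFp$; the composite $X\to\cX$ is then again smooth and surjective, and $X\to Y$ is surjective because $\cX\times_\cY Y\to Y$ is a base change of the surjection $\cX\to\cY$. Decomposing $X$ and $Y$ into their finitely many connected components and treating the resulting pieces separately, I may assume the three smooth morphisms $X\to\cX$, $Y\to\cY$ and $X\to\cX\times_\cY Y$ have constant relative dimensions $a$, $b$, $a_0$ with $a=a_0+b$, so that $\dim\cX=\dim X-a$ and $\dim\cY=\dim Y-b$.

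The key geometric point concerns fibres over closed points. Since $\bFp$ is algebraically closed and $Y$ is Jacobson, a closed point $y\in Y$ has residue field $\bFp$ and determines a $\bFp$-point $x\colon\spec\bFp\to\cY$; the scheme-theoretic fibre $X_y=X\times_{Y}\spec\bFp$ is a base change of the smooth surjection $X\to\cX\times_\cY Y$ along $y$, hence is smooth and surjective of relative dimension $a_0$ over the stack fibre $\cX\times_{\cY,x}\spec\bFp$, so $\dim X_y=\dim\bigl(\cX\times_{\cY,x}\spec\bFp\bigr)+a_0$. Thus the hypothesis of (1) (resp. (2)) says exactly that every fibre of $f\colon X\to Y$ over a closed point of $Y$ has dimension $\le d+a_0$ (resp. $\ge d+a_0$). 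Granting that this forces $\dim X\le\dim Y+d+a_0$ (resp. $\dim X\ge\dim Y+d+a_0$, using surjectivity of $f$), one concludes $\dim\cX=\dim X-a\le(\dim Y+d+a_0)-(a_0+b)=\dim\cY+d$ for part (1), and the same computation with all inequalities reversed for part (2).

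It remains to establish the two purely scheme-theoretic facts for a morphism $f\colon X\to Y$ of finite-type $\bFp$-schemes. For the upper bound I would take any $\xi\in X$ and combine the standard inequality $\dim_\xi X\le\dim_{f(\xi)}Y+\dim_\xi X_{f(\xi)}$, valid for morphisms locally of finite type, with the upper semicontinuity of $\xi\mapsto\dim_\xi X_{f(\xi)}$: there is a closed point $\xi'$ in the closure of $\xi$ with $\dim_{\xi'}X_{f(\xi')}\ge\dim_\xi X_{f(\xi)}$, and since $f(\xi')$ is then a closed point of $Y$ the right-hand side is $\le c$; taking the supremum over $\xi$ gives $\dim X\le\dim Y+c$. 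For the lower bound, assuming $f$ surjective, I would pick an irreducible component $Y_0$ of $Y$ with $\dim Y_0=\dim Y$; by surjectivity some irreducible component $X_0$ of $X$ dominates $Y_0$, and over a suitable dense open $U\subseteq Y_0$ the fibres of the components of $X$ dominating $Y_0$ acquire the dimension predicted by the fibre-dimension theorem while the components not dominating $Y_0$ contribute nothing to fibres over $U$. Choosing, again by the Jacobson property, a closed point $y\in U$ lying on no other irreducible component of $Y$, I get $c\le\dim X_y=\max\{\dim X_i-\dim Y_0\}$, the maximum being over those $X_i$ that dominate $Y_0$, and hence $\dim X\ge\dim Y_0+c=\dim Y+c$.

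I expect the lower-bound half of the scheme-theoretic input — the surjective case — to be the only non-formal step: the hypothesis controls fibres only over closed points, whereas the fibre dimension predicted by the fibre-dimension theorem is attained only generically, so one must genuinely track the irreducible components of both $X$ and $Y$ and exploit the density of closed points. The upper-bound half is a soft consequence of standard results on dimensions of fibres, and the passage between stacks and schemes in the first two paragraphs is routine bookkeeping with atlases and relative dimensions.
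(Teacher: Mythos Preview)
Your proposal is correct and follows the standard reduction-to-schemes approach; the paper's own proof is simply a citation to the Stacks Project (Tags 02JU, 0AFN, 0AFP for part (1), with 02JB added for part (2)), and what you have written is essentially an unwinding of the content of those references. There is nothing substantively different between the two.
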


\begin{proof}
(1)
It follows from \cite[Tag 02JU, 0AFN, 0AFP]{Stacks}.

(2)
It follows from \cite[Tag 02JU, 02JB, 0AFN, 0AFP]{Stacks}.
\end{proof}

\begin{cor}
\label{cor:relative-dim}
(1)
Let $\lsup LH \hookrightarrow \lsup LH'$ be an $L$-embedding
between algebraic groups.
If $\cX\subset \cX_{\lsup LH}$
is an algebraic substack
and $\cY$ is the scheme-theoretic image of $\cX$
in $\cX_{\lsup LH'}$,
then $\dim \cY \le \dim \cX$.

(2)
Let $\lsup LH \to \lsup LH'$ be an isogeny
between algebraic groups.
If $\cX\subset \cX_{\lsup LH}$
is an algebraic substack
and $\cY$ is the scheme-theoretic image of $\cX$
in $\cX_{\lsup LH'}$,
then $\dim \cY = \dim \cX$.
\end{cor}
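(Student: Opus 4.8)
The plan is to deduce both parts from the fibre-dimension estimates of Lemma~\ref{lem:fiber}, applied to the morphism $\pi\colon\cX_{\lsup LH}\to\cX_{\lsup LH'}$ of reduced Emerton-Gee stacks induced by functoriality in the $L$-group. What is needed is a description of the fibres of $\pi$ over $\bFp$-points. Since the $\bFp$-points of $\cX_{\lsup LH}$ form the groupoid of Langlands parameters, the automorphism group scheme of a parameter $\sigma\colon\Gal_F\to\lsup LH$ is its centralizer in $\wh H$, and $\pi$ acts on automorphism groups through the homomorphism of centralizers induced by $\wh H\to\wh{H'}$. In case (1) this homomorphism is a closed immersion, so $\pi$ is representable by algebraic spaces; hence every non-empty fibre of $\pi$ over an $\bFp$-point is a non-empty algebraic space and so has dimension $\ge 0$. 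In case (2) the kernel of $\wh H\to\wh{H'}$ is finite, and unwinding the $2$-fibre product shows that the fibre of $\pi$ over a parameter $\rho'$ that lifts to $\lsup LH$ is a \emph{finite} Deligne-Mumford stack: using surjectivity of $\wh H\to\wh{H'}$, every point of the fibre is isomorphic to one of the form $(\sigma_0,\id)$ with $\sigma_0$ an honest lift of $\rho'$, and the set of such lifts is a torsor under $Z^1(\Gal_F,\ker(\wh H\to\wh{H'}))$, which is finite. Thus in case (2) the fibres of $\pi$ over points of its image are $0$-dimensional.

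For part (1): write $\cY$ for the scheme-theoretic image. As the statement only involves dimensions we may assume $\cX$ reduced, so that $\cY$ is the closure of the set-theoretic image of the finite-type morphism $f\colon\cX\to\cY$. By Chevalley's theorem (constructibility of images for finite-type morphisms of algebraic stacks), the image of $f$ is constructible and dense in $\cY$, hence contains a dense open substack $\mathcal{U}\subseteq\cY$, and $\dim\mathcal{U}=\dim\cY$. Restricting, $f$ induces a surjection $f^{-1}(\mathcal{U})\to\mathcal{U}$ all of whose fibres over $\bFp$-points are substacks of the (algebraic-space) fibres of $\pi$, hence non-empty algebraic spaces of dimension $\ge 0$; by Lemma~\ref{lem:fiber}(2), $\dim f^{-1}(\mathcal{U})\ge\dim\mathcal{U}=\dim\cY$. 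Since $f^{-1}(\mathcal{U})$ is an open substack of $\cX$, this gives $\dim\cY\le\dim\cX$.

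For part (2): the inequality $\dim\cY\le\dim\cX$ follows exactly as in part (1), using now that the fibres of $\pi$ (hence of $f$) over points of the image are non-empty Deligne-Mumford stacks and so have dimension $\ge 0$. For the reverse inequality, pick an irreducible component $\cX_0\subseteq\cX$ with $\dim\cX_0=\dim\cX$ and let $\cY_0$ be the closure of its image; then $f_0\colon\cX_0\to\cY_0$ is dominant with $\cX_0$ irreducible, so there is a dense open $\mathcal{V}\subseteq\cY_0$ inside the image, $f_0^{-1}(\mathcal{V})$ is a non-empty---hence dense---open of $\cX_0$, and $f_0^{-1}(\mathcal{V})\to\mathcal{V}$ is surjective with $0$-dimensional fibres (substacks of the $0$-dimensional fibres of $\pi$). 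Lemma~\ref{lem:fiber}(1) then gives
$$
\dim\cX=\dim\cX_0=\dim f_0^{-1}(\mathcal{V})\le\dim\mathcal{V}=\dim\cY_0\le\dim\cY,
$$
and together with the previous inequality this yields $\dim\cY=\dim\cX$.

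The one substantive point is the fibre analysis of $\pi$ in the first paragraph: identifying the automorphism group of an Emerton-Gee point with a centralizer in the dual group, checking that functoriality of the stacks intertwines these centralizers via the given homomorphism $\wh H\to\wh{H'}$, and running the short $2$-fibre-product computation that, in case (2), exhibits the fibres as finite. Given that, the remainder is a routine combination of Chevalley's theorem, passage to an irreducible component, and Lemma~\ref{lem:fiber}; in particular, no delicate facts about reductive groups in characteristic $p$ enter, since the surjectivity of $\wh H\to\wh{H'}$ alone forces the fibres in the isogeny case to be finite.
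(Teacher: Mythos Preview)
Your proof is correct and follows the same approach as the paper: bound the fibre dimensions of $\cX_{\lsup LH}\to\cX_{\lsup LH'}$ and apply Lemma~\ref{lem:fiber}. The paper is much terser---for (1) it simply exhibits the scheme $[\Aut_{\breve{H}'}(x)/\Aut_{\breve H}(x)]$ inside the fibre (where you instead argue representability), and for (2) it just asserts that the fibres have finitely many $\bFp$-points---but the substance is the same, and your explicit handling of the surjectivity hypothesis via Chevalley's theorem fills in detail the paper leaves implicit.
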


\begin{proof}
(1)
Let $x:\spec\bFp \to \cX_{\lsup LH'}$.
There exists a monomorphism
$
[\Aut_{\breve{H}'}(x) /\Aut_{\breve H}(x)]
\hookrightarrow
\cX_{\lsup LH}\times_{\cX_{\lsup LH'}, x}\spec \bFp$.
Note that $\Aut_{\breve{H}}(x) \subset \Aut_{\breve H'}(x)$
is a subgroup.

(2)
All fibers 
$\cX_{\lsup LH}\times_{\cX_{\lsup LH'}, x}\spec \bFp$
have finitely many $\bFp$-points
and are non-empty,
and thus $0$-dimensional.
\end{proof}

\subsection{Coarse Moduli Spaces}
If $\cX$ is an algebraic stack,
define the presheaf of sets
$|\cX|: T\mapsto |\cX(T)|$
where $|\cX(T)|$ is the the set
of equivalence classes of the groupoid $\cX(T)$.
We say $|\cX|$ is the coarse moduli space
of $\cX$ is it is representable by algebraic spaces.

\section{Involutions and elliptic parameters}

Starting from this section the subscript $n$ in $G_n$
will change,
and as a consequence, we use $M_{m,n}=M_m$
to denote the $m$-th standard Levi.

For Galois representations $\rho$
valued in a similitude group,
we denote by $\lambda_\rho$ its similitude character.

A paramater is said to be {\it elliptic}
if it does not factor through any proper parabolics.

\begin{dfn}
\label{def:theta}
Define $\theta:\cX_{\lsup LM_{m, n+2m}} \to \cX_{\lsup LM_{m, n+2m}}$
by
$$
\theta(\psi)=
\begin{cases}
\psi(\j\circ \j^{-1})^{-t} & G\text{~unitary} \\
\lambda_{\psi}
\begin{bmatrix}
& & w_m \\
& w_n' & \\
-w_m & &
\end{bmatrix}
\psi^{-t} 
\begin{bmatrix}
& & w_m \\
& w_n' & \\
-w_m & &
\end{bmatrix}^{-1}
& G\text{~symplectic} \\
\lambda_{\psi} w_{n+2m} \psi^{-t} 
w_{n+2m}
& G\text{~orthogonal}
\end{cases}
$$
for $\psi:\Gal_F\to \lsup LM_m(\bFp)$;
$\theta$ is an involution in the sense that $\theta\circ \theta=1$.

Denote by $\cX_{\lsup LG}^{\Ell}$ the closed substack
of $\cX_{\lsup LG}$ formed by elliptic parameters.

An $\bFp$-point of $\cX_{\lsup LM_{m,n+2m}}^{\Ell}$
is represented by a pair $(\alpha, \rho)$
where $\alpha:\Gal_F\to\lsup L\Res_{K/F}\GL_m(\bFp)$
is an irreducible Galois representation
and $\rho\in \cX_{\lsup LG_n}^{\Ell}$
is elliptic.
Set $(\theta(\alpha), \theta(\rho)):=\theta(\alpha, \rho)$.
Also denote by 
$\alpha_K:\Gal_K\to \GL_m(\bFp)$
the Galois representation
that corresponds to $\alpha$
via non-abelian Shapiro lemma.
\end{dfn}

\begin{lem}
If $v_2>0$, then
$\cX_{\lsup LM_{m,n+2m}}^{\vec h, \Ell}$
is empty over the non-self-adjoint locus
where $\alpha(-1)\ncong \theta(\alpha(-1))$.
\label{lem:nsad}
\end{lem}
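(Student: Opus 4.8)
The plan is to read off the constraint on $\alpha(-1)$ directly from the cup-product description of $X^c$. Recall from Section~4 that extending a parameter $c=c_I+c_{II}\in H^1(\Gal_F,V_m)$ to $\Gal_F\to\lsup LP_m$ is possible precisely when $c_I\cup c_{II}=0$ in $H^2(\Gal_K,\wt Z_m)^\Delta\hookrightarrow H^2(\Gal_K,\wt Z_m)$, and that the pairing
$$
\cup:\ H^1(\Gal_K,\wt V_I)\times H^1(\Gal_K,\wt V_{II})\to H^2(\Gal_K,\wt Z_m)
$$
is induced by the $\Gal_K$-equivariant bilinear map $\wt V_I\times\wt V_{II}\to\wt Z_m$. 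Under the identification of an elliptic parameter of $\lsup LM_{m,n+2m}$ with a pair $(\alpha,\rho)$, the Galois modules $\wt V_I$ and $\wt V_{II}$ are (up to the standard twist) built out of $\alpha_K$ and $\tau=\rho|_{\Gal_K}$: concretely $\wt V_I\cong\Hom(\alpha_K,\tau)$-type and $\wt V_{II}\cong\Hom(\tau,\beta)$-type modules, with $\beta$ the character in the lower-right block determined by $\alpha$ and $\tau$, and $\wt Z_m\cong\Hom(\beta^{\oplus m},\alpha^{\oplus m})$.

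First I would compute $v_2=\dim H^2(\Gal_F,V_m)$. By local Tate duality and (D2), $H^2(\Gal_F,V_m)$ is dual to $H^0$ of the Tate twist of $V_m^\vee$ over $\Gal_K$, cut out by the $\Delta$-invariants. The hypothesis $v_2>0$ therefore forces a nonzero $\Gal_K$-equivariant map in the relevant Hom-space, which — after unwinding the twist by the cyclotomic character and using that $\rho$ is elliptic (so $\tau$ has no "self-dual up to twist" sub-pieces beyond what ellipticity allows) — can only come from the outer blocks, i.e.\ it pins down a nonzero element of $\Hom_{\Gal_K}(\alpha_K,\text{(twist of }\alpha_K^\vee))$ or its mirror. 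Concretely this says $\alpha_K\cong\alpha_K^\vee\otimes(\text{character})$ in a way that is exactly the condition $\alpha(-1)\cong\theta(\alpha(-1))$.

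The second and main step is to show the converse-direction obstruction: when $v_2>0$, every class $c\in X$ with $c_I\cup c_{II}=0$ automatically forces $\alpha(-1)\cong\theta(\alpha(-1))$, so if the self-adjointness fails then $X^c$, and hence the elliptic locus of $\cX_{\lsup LM_{m,n+2m}}^{\vec h}$, is empty. The mechanism is that $v_2>0$ means the target $H^2(\Gal_K,\wt Z_m)^\Delta$ of the cup product is \emph{nonzero in a direction that every nonzero $c_I$ pairs onto}: by the non-degeneracy in Lemma~\ref{lem:inner} (and the local Tate duality non-degeneracy of each $H^1(\Gal_K,\wt V_{I,i})\times H^1(\Gal_K,\wt V_{II,j})$-pairing), a nonzero $c_I$ can be $\cup$-orthogonal to the whole of $H^1(\Gal_K,\wt V_{II})$ only if the relevant piece of $H^2$ vanishes. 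But $v_2>0$ together with ellipticity of $\rho$ is precisely the statement that some diagonal $H^2(\Gal_K,\bFp e_{i,i})$-component is nonvanishing, and that nonvanishing is equivalent (via Euler characteristic / local duality bookkeeping for the character $\beta^{-1}\alpha$ over $\Gal_K$) to $\alpha_K$ being conjugate-self-dual up to the appropriate twist, i.e.\ to $\alpha(-1)\cong\theta(\alpha(-1))$. Contrapositively, on the non-self-adjoint locus $v_2=0$, contradicting the hypothesis, so the stratum is empty.

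The step I expect to be the main obstacle is the identification, in precise terms, of the Galois-module structure of $V_m$ and $Z_m$ for a general elliptic $(\alpha,\rho)$ — in particular, keeping careful track of the cyclotomic twist and the $\Delta=\{1,\j\}$-action so that the vanishing/non-vanishing of $H^2(\Gal_F,V_m)$ translates exactly into the symmetry condition $\alpha(-1)\cong\theta(\alpha(-1))$ rather than some nearby-but-different condition. Once the dictionary between "$v_2>0$" and "$\alpha_K$ conjugate-self-dual up to twist" is nailed down using (D2), (D3) and Lemma~\ref{lem:inner}, the emptiness claim is formal: a locally closed substack defined by a Poincaré-polynomial condition that cannot be met is empty, so $\cX_{\lsup LM_{m,n+2m}}^{\vec h,\Ell}=\varnothing$ over the non-self-adjoint locus.
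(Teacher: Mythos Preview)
Your proposal has a genuine gap: you have misidentified what $v_2>0$ actually tells you, and you have imported the cup-product/$X^c$ machinery into a lemma where it plays no role.

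The lemma concerns only the Levi stratum $\cX_{\lsup LM_{m,n+2m}}^{\vec h,\Ell}$; there is no parabolic extension to perform and hence no obstruction class $c_I\cup c_{II}$ to analyze. Your entire ``second and main step'' about every class in $X^c$ forcing self-adjointness is aimed at the wrong object.

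More seriously, the local Tate duality computation you sketch does \emph{not} yield $\alpha(-1)\cong\theta(\alpha(-1))$ directly. Unwinding (D2) and Tate duality for $V_m=V_{m,n+2m}$ gives
\[
v_2=\dim H^2(\Gal_F,V_m)=\dim\Hom_{\Gal_K}\bigl(\alpha_K(-1),\ \rho|_{\Gal_K}\bigr),
\]
so $v_2>0$ says exactly that $\alpha(-1)$ (or $\alpha_K(-1)$) occurs as a subrepresentation of $\rho$. It says nothing about $\alpha_K^\vee$ by itself; there are no ``outer blocks'' in $V_m$ relating $\alpha$ to $\beta$---that is $Z_m$, governed by $z_2$, not $v_2$. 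Ellipticity of $\rho$ does \emph{not} force $\Hom(\alpha(-1),\rho)=0$, since elliptic is weaker than irreducible for symplectic/orthogonal/unitary groups.

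The paper's argument supplies the missing link. Having $\alpha(-1)^{\oplus v_2}\subset\rho$, one uses the symplectic (resp.\ orthogonal, hermitian) form carried by $\rho$: if this subspace is not totally isotropic, then pairing two vectors $x,y$ with $(x,y)\neq0$ and transporting by $\Gal_F$ produces an explicit $\Gal_F$-isomorphism $\alpha(-1)\cong\lambda_\rho\otimes\alpha(-1)^\vee=\theta(\alpha(-1))$. Contrapositively, if $\alpha(-1)\ncong\theta(\alpha(-1))$ then the subspace \emph{is} isotropic, whence $\rho$ factors through the stabilizer of an isotropic subspace---a proper parabolic---contradicting ellipticity. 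In the unitary case one argues with $\j Q\oplus Q$ in place of the form. This is the step your proposal is missing entirely.
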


\begin{proof}
First consider the case
where $G$ is either symplectic or orthogonal:
$\alpha(-1)^{\oplus v_2}$
is a subrepresentations of $\rho$;
if $\alpha(-1)^{\oplus v_2}$
is not an isotropic subspace of the repsesentation space
of $\rho$,
then there exists $x, y$ in the representation space
of $\alpha(-1)^{\oplus v_2}$ such that $(x,y)\neq 0$;
for any basis $\{e_i\}$ of $\bFp[\Gal_F]x$
and the basis $\{e_j^*\}$ of $\bFp[\Gal_F]y$
dual to $\{e_i\}$,
we have
$$
(ge_i, ge_j^*)=\lambda(g)(e_i, e_j^*)=\lambda\delta_{i,j}
$$
for all $i, j$ and $g\in\Gal_F$;
here $\lambda$ is the similitude character.
It implies
$\alpha(-1)\cong \bFp[\Gal_F]y\cong \theta(\alpha(-1))$.

Then consider the unitary case:
$Q:=\alpha(-1)^{\oplus v_2}_K$
is a subrepresentations of $\rho|_{\Gal_K}$;
and
$\j Q\oplus Q$ is a subrepresentation of $\rho$.
Denote by $Q'$ the sum of all irreducible subrepresentations
of $\rho|_{\Gal_K}$ that does not contain
isomorphic copies of 
$\alpha(-1)$ or $\theta(\alpha(-1))$.
Then $\rho$ factors through the proper parabolic
$
\begin{bmatrix}
\GL(Q) & * & * \\
& \GL(Q') & * \\
& & \GL(\j Q)
\end{bmatrix}\rtimes \Gal(K/F).
$
\end{proof}

Let $(\alpha, \rho)\in \cX_{\lsup LM_{m,n+2m}}^{\vec h, \Ell}$
be such that
$\alpha(-1)\cong \theta(\alpha(-1))$.
$\rho_0:=\alpha(-1)^{\oplus v_2}$
is a subrepresentation of $\rho$.
Denote by $\rho_1$ the sum
of irreducible $\Gal_K$-subrepresentation
of $\rho$ not isomorphic to $\alpha(-1)$.

We have
$
\Img\rho_0 
\subset
\lsup LG_{v_2m}
$
and
$
\Img\rho_1 
\subset
\lsup LG_{n-v_2m}
$,
and they have the same similitude character.

\begin{lem}
\label{lem:dim-ell}
We have 
$
\dim \cX_{\lsup LM_{m, n+2m}}^{\vec h, \Ell}
\le
-1
-\dim \Aut_{\bG_{v_2m}^{\ad}}(\rho_0).
$
\end{lem}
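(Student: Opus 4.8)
The plan is to stratify $\cX_{\lsup LM_{m, n+2m}}^{\vec h, \Ell}$ according to the isomorphism class of the ``$\rho_0$-part'' and to bound each stratum separately. First I would recall the setup just above the statement: an $\bFp$-point is $(\alpha,\rho)$ with $\alpha(-1)\cong\theta(\alpha(-1))$, and $\rho_0=\alpha(-1)^{\oplus v_2}$ sits inside $\rho$ as the $\alpha(-1)$-isotypic part, with $\Img\rho_0\subset\lsup LG_{v_2 m}$ and $\Img\rho_1\subset\lsup LG_{n-v_2 m}$ sharing a similitude character. Since $\rho_0=\alpha(-1)^{\oplus v_2}$ is rigid as an abstract representation (it is just a multiplicity space tensored with $\alpha(-1)$), the only moduli in the ``$\rho_0$ direction'' come from (a) the choice of similitude/pairing data making $\alpha(-1)^{\oplus v_2}$ land in $\lsup LG_{v_2 m}$ — but this is governed by the self-adjointness hypothesis and is discrete up to the relevant automorphisms — and (b) the extension data gluing $\rho_0$ to $\rho_1$. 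The key accounting device is that the automorphism group $\Aut_{\bG_{v_2 m}}(\rho_0)$ (respectively its adjoint quotient) acts on these extension classes, and the stack-theoretic dimension picks up a $-\dim\Aut$ from the stabilizer.

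The key steps, in order, would be: (1) Observe that the locus of elliptic $(\alpha,\rho)$ with fixed $\vec h$ and fixed self-adjoint $\alpha(-1)$ maps to the point-stack $[\,\mathrm{pt}/\Aut_{\bG_{v_2 m}}(\rho_0)\,]$ recording $\rho_0$ together with its $\lsup LG_{v_2 m}$-structure; ellipticity of $\rho$ forces $\rho_1$ to have no $\alpha(-1)$-isotypic part, so the datum $\rho_0$ is genuinely determined (not just up to isomorphism) once we remember the pairing. (2) The fiber of this map over $\rho_0$ parametrizes extensions of $\rho_1$ by $\rho_0$ inside a $\lsup LG$-valued parameter; this is a Galois-cohomological affine space, but ellipticity is an \emph{open, dense-complement-avoiding} condition — a non-split extension is required — so the elliptic fiber has dimension \emph{strictly less} than the full extension space, accounting for one of the two $-1$'s, or more precisely combining with the stabilizer to give the stated bound. (3) Feed the fiber-dimension bound into Lemma \ref{lem:fiber}(1): $\dim \cX^{\vec h,\Ell} \le \dim(\text{base}) + (\text{fiber dim})$, where the base is the $0$-dimensional-up-to-a-$-\dim\Aut$-gerbe classifying $\rho_0$. (4) Carefully bound the contribution of $\rho_1$ and of the gluing extension classes: the point is that $\rho_1$ ranges in $\cX_{\lsup LG_{n-v_2 m}}$ but the \emph{combined} count, after subtracting automorphisms and imposing ellipticity (no sub-parameter of $\rho$ is $\alpha(-1)$-isotypic beyond $\rho_0$, and $\rho$ itself is elliptic), collapses to the asserted $-1 - \dim\Aut_{\bG_{v_2 m}^{\ad}}(\rho_0)$.

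The main obstacle I expect is Step (2)–(4): making rigorous that imposing ellipticity of $\rho$ — a condition about $\rho$ not factoring through \emph{any} proper parabolic, not merely the obvious block-upper-triangular one — genuinely costs the full codimension needed, rather than just the one non-split-extension. The subtlety is that $\rho_0=\alpha(-1)^{\oplus v_2}$ is highly non-elliptic on its own (it is a direct sum of $v_2$ copies of one irreducible), so $\Aut_{\bG_{v_2 m}}(\rho_0)$ is large — roughly a classical group in $v_2$ variables — and one must check that \emph{this entire automorphism group} is quotiented out, i.e.\ that moving $\rho_0$ by its automorphisms while keeping $\rho$ fixed really is invisible in $\cX_{\lsup LM}$. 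I would handle this by writing the extension-class space as $H^1(\Gal_F, -)$ for the appropriate $\rho_0$-by-$\rho_1$ Hom-sheaf, noting $\Aut_{\bG_{v_2 m}}(\rho_0)$ acts linearly on it, and observing that the elliptic locus is contained in the stable locus for this action so the quotient dimension drops by $\dim\Aut$; the residual $-1$ then comes from the observation that the zero extension class (which would make $\rho$ split off $\rho_0$, hence non-elliptic) is excluded, together with the projectivization implicit in ``does not factor through a parabolic.'' Finally I would remark that passing from $\Aut_{\bG_{v_2 m}}$ to its adjoint quotient $\Aut_{\bG_{v_2 m}^{\ad}}$ accounts exactly for the central torus, which is absorbed into the similitude-character matching already imposed, so no double-counting occurs.
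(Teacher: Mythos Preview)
Your proposal has a genuine gap: you have misidentified where the dimension bound comes from. You treat the elliptic condition as forcing a \emph{non-split extension} between $\rho_0$ and $\rho_1$, and you then try to extract the bound from the geometry of an $H^1$ of extension classes together with a ``stable locus'' argument. But the situation is the opposite. Since $\rho$ is elliptic, the underlying parameter is semisimple in the relevant sense: there are \emph{no} nontrivial extensions, and $\rho$ is literally the product $\rho_0\times\rho_1$ inside $\lsup LG_{v_2m}\times\lsup LG_{n-v_2m}\subset\lsup LG_n$ (with matched similitude). The entire discussion of gluing data, $H^1$ of Hom-sheaves, excluding the zero class, and projectivization is therefore irrelevant, and the mechanism you describe for producing the ``$-1$'' does not exist.

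What the paper actually uses is the structure theory of elliptic parameters: $\rho_1$ factors through $\lsup LS$ for a torus $S\subset G_{n-v_2m}$ (citing \cite{L23}), and there are only finitely many conjugacy classes of such $\lsup LS$. This makes the $\rho_1$-moduli zero-dimensional, since $\dim\cX_{\lsup LS}=0$. On the $\rho_0$ side, self-adjointness of $\alpha(-1)$ pins down $\alpha$ (in the unitary case) or the similitude character pins down $\rho_0$ (in the symplectic/orthogonal case) up to finitely many choices, so again no moduli. The whole bound then falls out of a single application of Corollary~\ref{cor:relative-dim} (or a fiber-dimension count over $\cX_{\lsup LS}$): one computes the dimension of $[\spec\bFp/\Aut_{\GL_m}(\alpha)]\times[\spec\bFp/\Aut_{\bG_{v_2m}}(\rho_0)]\times\cX_{\lsup LS}$ directly. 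The ``$-1$'' is simply $-\dim\Aut_{\GL_m}(\alpha)=-\dim\Gm$ from the irreducibility of $\alpha$, not a codimension coming from avoiding a split locus. Your step (4), where $\rho_1$ ``ranges in $\cX_{\lsup LG_{n-v_2m}}$'', also overcounts badly without the torus constraint.
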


\begin{proof}
Note that
$\rho_1$ factors through
$\lsup LS\subset {\lsup LG}_{n-v_2m}$
where $S$ is a torus (\cite{L23}).
There are only finitely choices
of $\bG_{n-v_2m}$-conjugacy classes of $\lsup LS$;
so we can fix $S$.

First consider the unitary case.
Self-adjointness
determines the determinant character of $\alpha$
up to finite possibilities
(since $\alpha(-1)(\j \gamma \j) \cong \alpha(-1)(\gamma)^{-t}$);
and irreducible $\alpha$
has only finitely many possibilities up to
determinant characters.
So $\alpha$ has only finitely many possibilities.
We can thus fix $\alpha$.
The parameter $(\alpha, \rho)$
factors through 
$\lsup L\Res_{K/F}\GL_m\times \lsup LG_{v_2m} \times \lsup LS$.
By
Corollary \ref{cor:relative-dim},
\begin{align*}
\dim \cX_{\lsup LM_{m, n+2m}}^{\vec h, \Ell}
&\le 
\dim [\spec \bFp/\Aut_{\breve{\Res_{K/F}\GL_m}}{\alpha}]
\times
[\spec \bFp/\Aut_{\bG_{v_2m}}(\rho_0)]
\times \cX_{\lsup LS}
\\
&=
-1 - \dim \Aut_{\bG_{v_2m}}(\rho_0) + 0.
\end{align*}

Next consider the symplectic / orthogonal case.
Consider the morphism
$$
(\alpha, \rho)\mapsto \rho_1: 
\cX_{\lsup LM_{m, n+2m}}^{\vec h, \Ell} \to 
\cX_{\lsup LS}.
$$
We know $\dim \cX_{\lsup LS}=0$
and thus the dimension of 
$\cX_{\lsup LM_{m, n+2m}}$
is the same as the dimension
of a non-empty fiber.
Along each fiber, 
$\rho_1$ determines the similitude character
of $\rho_0$
and thus determines $\rho_0$
up to finitely many choices;
and $\rho_0$ determines $\alpha$.
So the dimension of each non-empty fiber is
$$
[\bS / \Aut_{\GL_m \times 
\bS\bG_{v_2m}}(\alpha, \rho_1, \rho_0)]
=
-1 - \dim \Aut_{\bG_{v_2m}^{\ad}}(\rho_0),
$$
since
$\bS\bG_{v_2m}/\bS = \bG_{v_2m}^{\ad}$.
The lemma now follows from Lemma 
\ref{lem:aut}.
\end{proof}

\begin{lem}
\label{lem:aut}
We have
$$
    \Aut_{\bG_{v_2m}}(\rho_0)
\cong
\begin{cases}
\GSp_{v_2} & G\text{~symplectic}\\
\GO_{v_2} & G\text{~orthogonal}\\
\Or_{v_2} & G\text{~unitary}.
\end{cases}
$$
\end{lem}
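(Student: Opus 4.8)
The plan is to treat $\rho_0$ as an isotypic Galois representation and read its centraliser off from the tensor decomposition $\rho_0\cong\alpha(-1)\otimes W$, the only non-formal ingredient being the symmetry type of the self-pairing carried by $\alpha(-1)$. First I would fix notation: since $\alpha(-1)$ is irreducible, Schur's lemma identifies the underlying space of $\rho_0$ — considered over $\Gal_K$ in the unitary case, over $\Gal_F$ otherwise — with $\alpha(-1)\otimes W$, with $\Gal_F$ acting through the first factor and $W$ the $v_2$-dimensional multiplicity space; it likewise identifies the space of $\Gal$-equivariant (similitude) bilinear forms with $b_0\otimes(\text{bilinear forms on }W)$ for a fixed self-pairing $b_0$ of $\alpha(-1)$, and non-degeneracy of the ambient form corresponds to non-degeneracy of the induced form $b_W$ on $W$. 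An element of $\bG_{v_2m}$ centralises $\Img\rho_0$ precisely when it acts as $\id_{\alpha(-1)}\otimes g_W$ with $g_W$ preserving $b_W$ — up to a scalar in the two similitude cases $\GSp_n$, $\GSO_n$, and on the nose in the unitary case, where the constraint comes from commuting with an element of $\rho_0(\Gal_F)$ lying over $\j$ and the $\j$-twist on $\GL$ involves no similitude factor. Hence $\Aut_{\bG_{v_2m}}(\rho_0)\cong\GSp(W)$, $\GO(W)$, or $\Or(W)$ according as $b_W$ is alternating, symmetric, or symmetric.

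The heart of the argument is to show that $b_0$ is symmetric — equivalently, that $\alpha(-1)$ is of orthogonal type, respectively conjugate-orthogonal type in the unitary case. This is exactly what the standing hypothesis $\alpha(-1)\cong\theta(\alpha(-1))$ records once one unwinds Definition \ref{def:theta}: the intertwiner between $\alpha(-1)$ and its (conjugate-)dual furnished by $\theta$ is realised through conjugation by the matrix $w_m$, and $w_m=w_m^t$ is symmetric, so the induced self-pairing $b_0$ is symmetric. I would stress that this is insensitive to the off-diagonal sign ($-w_m$ versus $+w_m$) distinguishing the symplectic and orthogonal incarnations of $\theta$: that sign governs the type of the form on the whole space of $\rho$ — alternating for $\GSp$, symmetric for $\GSO$ — not the pairing on a single isotypic constituent. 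Granting this, and using that the tensor product of a symmetric form with an arbitrary bilinear form has the same type as the latter, $b_W$ inherits the type of the ambient form: alternating when $G$ is symplectic, symmetric when $G$ is orthogonal or unitary. Combined with the dichotomy of the first paragraph, this yields $\GSp_{v_2}$, $\GO_{v_2}$, $\Or_{v_2}$ respectively.

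Two bookkeeping matters remain. In the unitary case I would spell out the passage through the non-abelian Shapiro lemma, checking that $\theta(\alpha(-1))\cong\alpha(-1)$ becomes conjugate-orthogonality of $\alpha_K(-1)$, and carefully tracking the conjugation by $\j$ when computing the square of an element of $\rho_0(\Gal_F)$ lying over $\j$ — this is what pins down that the form $b_W$ there is symmetric rather than alternating, hence $\Or_{v_2}$. The degenerate case $v_2=0$ is handled by the conventions $\GSp_0=\GO_0=\GL_1$ (the residual scalar action survives even though $\rho_0$ is trivial) and $\Or_0=1$. I expect the type computation for $b_0$ to be the one genuine obstacle: one must keep the symmetry of the auxiliary matrix $w_m$ cleanly separated from the sign of the form defining $\bG_n$, and in the unitary case carry the semilinear twist through without error — the same sign discipline that underlies Lemma \ref{lem:inner}.
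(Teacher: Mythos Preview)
Your tensor-decomposition framework is essentially the same idea as the paper's one-line reduction ``we can assume without loss of generality $m=1$ by Schur's lemma'': both identify $\Aut_{\bG_{v_2m}}(\rho_0)$ with the (similitude) isometry group of the induced form $b_W$ on the $v_2$-dimensional multiplicity space $W$, so that everything collapses to determining the type of $b_W$.

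The gap is in your argument that $b_0$ is symmetric. The involution $\theta$ of Definition~\ref{def:theta} acts on isomorphism classes; the hypothesis $\alpha(-1)\cong\theta(\alpha(-1))$ records only that $\alpha(-1)$ is (conjugate-)self-dual up to the similitude twist, and an intertwiner realising this is some $g\in\GL_m$ determined only up to scalar by Schur. The matrix $w_m$ appears in the \emph{formula} for $\theta$, but that formula defines a map on conjugacy classes, not a distinguished intertwiner --- so the symmetry of $w_m$ carries no information about the sign of the intrinsic self-pairing $b_0$ on $\alpha(-1)$. For $m$ even there is no obstruction to $\alpha(-1)$ being of symplectic type, in which case $b_W$ acquires the \emph{opposite} parity to the ambient form and your conclusion comes out swapped (e.g.\ $\GO_{v_2}$ rather than $\GSp_{v_2}$ in the symplectic case). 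The same objection applies verbatim to your conjugate-orthogonality claim in the unitary case.

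The paper sidesteps the issue by passing to $m=1$ at the outset: a nonzero bilinear form on a one-dimensional space is automatically symmetric, so $b_0$ has sign $+1$ for free, $\rho_0$ is a scalar matrix, and its centraliser is visibly all of $\bG_{v_2}$. The unitary case is then an explicit computation with a specific lift $w_{v_2}\rtimes\j$ of $\j$ (singled out by ellipticity of $\rho_0$), yielding the condition $gg^t=1$. If you want your tensor argument to stand for general $m$, you would need an independent reason for $b_0$ to be orthogonal-type, or else to observe that the two possible outcomes have the same dimension --- which is all that the downstream application in Theorem~\ref{thm:universal} actually uses.
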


\begin{proof}
Since $\alpha$ is irreducible,
we can assume without loss of generality $m=1$
by Schur's lemma.
In the symplectic or orthogonal case,
$\rho_0$ is a scalar matrix
$
\begin{bmatrix}
\alpha(-1) & & \\
& \dots & \\
& & \alpha(-1)
\end{bmatrix}
$
and
its automorphism group is the entire $\bG_{v_2}$.

Next consider the unitary case:
for $\gamma\in \Gal_F\backslash \Gal_K$,
we have
$$\rho_0(\gamma)
=
\begin{bmatrix}
\alpha(-1)_K(\gamma) & & \\
& \dots & \\
& & \alpha(-1)_K(\gamma)
\end{bmatrix}
w_{v_2}\rtimes \j$$
where $w_{v_2}$ is the longest Weyl group element for
$\GL_{v_2}$
(up to isomorphism,
$w_0$ is the only constant matrix that makes $\rho_0$ elliptic).
Note that
$(w_{v_2}\rtimes\j)g(w_{v_2}\rtimes \j)=g^{-t}$.
So $g\in \Aut_{\bG_{v_2}}(\rho_0)$
if and only if $gg^{t}=1$.
\end{proof}

\section{Universal induction setup}

\begin{lem}
\label{lem:sad-nsad}
Let $(\alpha, \rho)\in \cX_{\lsup LM_{m,n+2m}}^{\vec h}$.
Recall that $\alpha$ is irreducible.

(1)
If $\alpha(-1) \not\cong \theta(\alpha(-1))$,
then $\rho$ factors through the
parabolic subgroup
$\lsup LP_{v_2m,n}$
of $\lsup LG_n$:
indeed
we have matrix presentation
$$
\rho
=
\begin{bmatrix}
\alpha(-1)^{\oplus v_2} & * & *\\
& * & * \\
& & \theta(\alpha(-1)^{\oplus v_2})
\end{bmatrix}\rtimes *
\subset
\begin{bmatrix}
\GL_{v_2m} & * & * \\
& \breve{G}_{n-2v_2m}  & * \\
& & \GL_{v_2m}
\end{bmatrix}\rtimes \Gal(K/F).
$$
In otherwords, $\rho$
has a totally isotropic subrepresentation
isomorphic to
$\alpha(-1)^{\oplus v_2}$.

(2)
If $\alpha(-1) \cong \theta(\alpha(-1))$,
then 
we can write
$v_2 = v_2^a + v_2^i$
such that
$\rho$ has a totally isotropic subrepresentation
isomorphic to 
$\alpha(-1)^{\oplus v_2^i}$
and a totally anisotropic subrepresentation isomorphic
to 
$\alpha(-1)^{\oplus v_2^a}$
in the sense that:
$\rho$ factors through
$\lsup LP_{v_2^im, n-v_2^am}\lsup LG_{v_2^am}$
and we have matrix presentation
$$
\rho
=
\begin{bmatrix}
\alpha(-1)^{\oplus v_2^i} & * & * & 0\\
& * & * & 0\\
& & \theta(\alpha(-1)^{\oplus v_2^i}) & 0\\
& & & \rho^a
\end{bmatrix}\rtimes *
\subset
\begin{bmatrix}
\GL_{v_2^im} & * & * &0\\
& \breve{G}_{n-2v_2^im-v_2^am}  & *&0 \\
& & \GL_{v_2^im}&0\\
& & & \breve{G}_{v_2^am}
\end{bmatrix}\rtimes \Gal(K/F).
$$
Here $\rho^a$ is a direct sum of elliptic parameters.
\end{lem}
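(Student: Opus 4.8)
The plan is to work on the standard module of $\rho$ and to extract a canonical $\Gal_F$-stable flag from the $\alpha(-1)$-isotypic part of its socle, using only that $\alpha(-1)$ is irreducible and that $\theta(\alpha(-1))$ is the form-dual of $\alpha(-1)$ twisted by the similitude character. Write $W$ for this module: in the symplectic and orthogonal cases it is $\bFp^n$ with the nondegenerate $\lambda_\rho$-equivariant bilinear form preserved by $\lsup LG_n$, and in the unitary case I would pass to $\rho|_{\Gal_K}$ on $\bFp^n$ together with the conjugate-self-duality recorded by the $\j$-part of $\rho$, exactly the structure already used in the proof of Lemma \ref{lem:nsad}. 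First I would identify $v_2$ representation-theoretically: by Axiom (D2), the identification of $\gr^1\Lie\wt U_m$ with $\Hom$-modules read off from the matrix presentation of $\lsup L{\wt P}_m$, and local Tate duality, one gets $v_2=\dim_{\bFp}\Hom_{\Gal_K}(\alpha(-1),\rho)$. Since $\alpha$, hence $\alpha(-1)$, is irreducible, the $\alpha(-1)$-isotypic subrepresentation $Q\subseteq W$ --- the sum of all subrepresentations isomorphic to $\alpha(-1)$ --- is semisimple, and $Q\cong\alpha(-1)^{\oplus v_2}$.

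For (1) the argument is short. The restriction of the form to $Q$ is an element of $\Hom_{\Gal_F}(Q\otimes Q,\bFp(\lambda_\rho))\cong\Hom_{\Gal_F}(Q,Q^\vee(\lambda_\rho))$, and by Definition \ref{def:theta} (cf.\ the proof of Lemma \ref{lem:nsad}) one has $Q^\vee(\lambda_\rho)\cong\theta(\alpha(-1))^{\oplus v_2}$; since $\alpha(-1)$ and $\theta(\alpha(-1))$ are non-isomorphic irreducibles this $\Hom$-space vanishes, so $Q$ is totally isotropic. Then $0\subset Q\subset Q^\perp\subset W$ is a $\Gal_F$-stable flag, the form gives a perfect pairing $Q\times(W/Q^\perp)\to\bFp(\lambda_\rho)$ identifying $W/Q^\perp\cong\theta(\alpha(-1))^{\oplus v_2}$, and $Q^\perp/Q$ inherits a nondegenerate form; hence $\rho$ factors through the stabilizer of the flag, which is $\lsup LP_{v_2m,n}$, in the displayed block form. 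In the unitary case $Q\subset Q^\perp$ is a priori only $\Gal_K$-stable, and the descent to $\lsup LP_{v_2m,n}\subset\lsup LG_n$ runs exactly as in the proof of Lemma \ref{lem:nsad}, using that $\theta(\alpha(-1))$ is the $\j$-conjugate dual of $\alpha(-1)$ and that these are the only two relevant isotypic components.

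For (2), now $\alpha(-1)\cong\theta(\alpha(-1))$, so $\Hom_{\Gal_F}(Q\otimes Q,\bFp(\lambda_\rho))$ is one-dimensional, spanned by an invariant form $b_0$ on $\alpha(-1)$ of a definite symmetry $\epsilon_0=\pm1$; thus the restricted form is $b_0\otimes B$ for a form $B$ on the $v_2$-dimensional multiplicity space, of symmetry $\epsilon/\epsilon_0$, where $\epsilon=\pm1$ according to $\lsup LG_n$. I would put $v_2^i:=\dim\operatorname{rad}B$, $v_2^a:=v_2-v_2^i$, take $Q^i\cong\alpha(-1)^{\oplus v_2^i}$ to be the radical of the restricted form --- a totally isotropic $\Gal_F$-subrepresentation --- and choose a $\Gal_F$-complement $Q^a\cong\alpha(-1)^{\oplus v_2^a}$ on which the form is nondegenerate. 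Then $W=Q^a\perp(Q^a)^\perp$ orthogonally, $\rho^a:=\rho|_{Q^a}$ is valued in $\lsup LG_{v_2^am}$, $Q^i\subseteq(Q^a)^\perp$ is totally isotropic, and running the flag construction of (1) inside $(Q^a)^\perp$ yields the factorization through $\lsup LP_{v_2^im,\,n-v_2^am}\lsup LG_{v_2^am}$ with the displayed block form and quotient $\theta(\alpha(-1))^{\oplus v_2^i}$. It then remains to see that $\rho^a$ is a direct sum of elliptic parameters: using $\epsilon_0=\epsilon$, the form $B$ on $Q^a$ is symmetric, so a Gram-Schmidt decomposition of the multiplicity space realizes $(Q^a,\langle\ ,\ \rangle)$ as an orthogonal direct sum of copies of $\alpha(-1)$, each irreducible and hence elliptic.

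The main obstacle will be the Frobenius-Schur bookkeeping underlying the last step of (2): I must show that a self-$\theta$-dual irreducible $\alpha(-1)$ carries its canonical invariant form with symmetry matching $\lsup LG_n$ --- equivalently, that the anisotropic remainder genuinely splits into irreducible (hence elliptic) summands rather than into two-step objects --- and this is precisely where the explicit matrix presentations of Section \ref{sec:classic} and axioms (D4)--(D6) enter. The remaining technical point, routine by comparison, is the $\Gal_K$-to-$\Gal_F$ descent in the unitary case, handled as in the proof of Lemma \ref{lem:nsad}.
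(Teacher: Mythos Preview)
Your argument for (1), and for the block decomposition in (2) via the radical/complement splitting of the form on the $\alpha(-1)$-isotypic piece $Q$, is correct and is exactly what the paper's one-line proof (``take the orthogonal complement of an anisotropic subrepresentation'') amounts to. Your identification $v_2=\dim\Hom_{\Gal_K}(\alpha(-1),\rho)$ and the verification that $Q^i:=\operatorname{rad}(\langle\,,\,\rangle|_Q)$ is totally isotropic while $Q^a$ carries a nondegenerate restricted form are both fine, and this is all that the sole application (the proof of Lemma~\ref{lem:degen}) actually uses.

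The genuine gap is in your plan for the final clause, that $\rho^a$ is a direct sum of \emph{elliptic} parameters. You propose to show $\epsilon_0=\epsilon$ and then Gram--Schmidt the symmetric form $B$ on the multiplicity space. But $\epsilon_0=\epsilon$ is simply false in general: take $G$ symplectic and $m=1$. Then $\alpha(-1)$ is a character, so the unique invariant pairing $b_0$ on it is symmetric, giving $\epsilon_0=+1\neq-1=\epsilon$; hence $B$ is alternating, every line in the multiplicity space is $B$-isotropic, and no orthogonal decomposition of $Q^a$ into irreducible (hence elliptic) pieces exists. The axioms (D4)--(D6) do not rescue this, since they concern the $\Delta$-action on $\wt V$ and $\wt Z$, not the Frobenius--Schur sign of an arbitrary self-$\theta$-dual $\alpha(-1)$. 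In fact Lemma~\ref{lem:aut} already records that in the symplectic case the automorphism group of $\alpha(-1)^{\oplus v_2}$ inside $\bG_{v_2m}$ is $\GSp_{v_2}$, i.e.\ $B$ is alternating. So the obstacle you flagged is not merely bookkeeping: the last clause of the lemma, read literally, does not follow from your construction (nor is it established by the paper's own proof), though it is also never invoked later.
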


\begin{proof}
(1)
It is already proved in the proof of Lemma \ref{lem:nsad}.

(2)
Only need to note that you can take the symplectic (or orthogonal or unitary)
complement of an anisotropic subrepresentation.
\end{proof}

Consider the diagram
$$
\xymatrix{
  \cY^{\vec h} \ar[r] \ar[d]^f & \cX^{\vec h}_{\lsup LM_{m, n+2m}} \\
  \cX_{\lsup LM_{m, n-2v_2m+2m}}
}
$$
where $\cY^{\vec h}\subset 
\cX_{\lsup L\Res_{K/F}\GL_m\times \lsup LP_{v_2m,n}}
\underset{\cX_{\lsup LM_{m,n+2m}}}{\times}
\cX_{\lsup LM_{m,n+2m}}^{\vec h}
$
consists of pairs $(\alpha, \rho)$
where 
$$
\rho =
\begin{bmatrix}
\alpha(-1)^{\oplus v_2} & * & *\\
& \tau & * \\
& & \theta(\alpha(-1)^{\oplus v_2})
\end{bmatrix}\rtimes *
\subset
\begin{bmatrix}
\GL_{v_2m} & * & * \\
& \breve{G}_{n-2v_2m}  & * \\
& & \GL_{v_2m}
\end{bmatrix}\rtimes \Gal(K/F),
$$
and
$f(\alpha, \rho) = (\alpha(-1), \tau)$.

\begin{dfn}
Denote by $\cX^{\vec h, \circ}_{\lsup LM_{m, n+2m}}$
the scheme-theoretic image
of $\cY^{\vec h}$
in $\cX^{\vec h}_{\lsup LM_{m, n+2m}}$.
\end{dfn}

We remark,
without using or proving the statement, that
 $\cX^{\vec h, \circ}_{\lsup LM_{m, n+2m}}$
is dense in $\cX^{\vec h}_{\lsup LM_{m, n+2m}}$;
this is not surprising 
because by part (2)
of Lemma \ref{lem:sad-nsad},
the complement of 
 $\cX^{\vec h, \circ}_{\lsup LM_{m, n+2m}}$
consists parameters
whose entire rows consists of zero entries.

\begin{lem}
\label{cor:dense}
$\dim \cX^{\vec h, \circ}_{\lsup LM_{m, n+2m}} \le \dim \cY^{\vec h}$.
\end{lem}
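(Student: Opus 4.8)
The statement is that $\dim \cX^{\vec h, \circ}_{\lsup LM_{m, n+2m}} \le \dim \cY^{\vec h}$, where $\cX^{\vec h, \circ}_{\lsup LM_{m, n+2m}}$ is by definition the scheme-theoretic image of $\cY^{\vec h}$ under the map $\cY^{\vec h} \to \cX^{\vec h}_{\lsup LM_{m, n+2m}}$. So this is a general fact about dimensions under scheme-theoretic images of morphisms of algebraic stacks of finite type over $\bFp$. The plan is to invoke Corollary \ref{cor:relative-dim}(1) (or the underlying principle of Lemma \ref{lem:fiber}): if $g\colon \cX \to \cX'$ is a morphism of algebraic stacks of finite type and $\cY$ is the scheme-theoretic image of $\cX$, then $\dim \cY \le \dim \cX$. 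Indeed, $\cY$ is covered by the image of $\cX$ (the scheme-theoretic image agrees with the closure of the set-theoretic image when the source is reduced and everything is of finite type, and in any case the set-theoretic image is dense in $\cY$), so $\cY^{\red}$ is dominated by $\cX$; a dominant morphism of finite-type algebraic stacks cannot raise dimension.

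Concretely, I would argue as follows. The morphism $\cY^{\vec h} \to \cX^{\vec h, \circ}_{\lsup LM_{m, n+2m}}$ is, by construction, scheme-theoretically dominant, hence (everything being of finite type over $\bFp$, so in particular quasi-compact) its set-theoretic image is dense. By \cite[Tag 0DRE]{Stacks} (or the analogous statements cited in the proof of Lemma \ref{lem:fiber}), there is a dense open substack of $\cX^{\vec h, \circ}_{\lsup LM_{m, n+2m}}$ lying in the set-theoretic image; replacing $\cX^{\vec h,\circ}$ by this open and observing it has the same dimension, we may assume the morphism is surjective. Then apply Lemma \ref{lem:fiber}(1) with $d = \dim$ of the generic fiber, or more simply note that a surjective morphism $\cX \twoheadrightarrow \cY$ of finite-type algebraic stacks satisfies $\dim \cY \le \dim \cX$ because the fibers have dimension $\ge 0$ — wait, that gives the wrong inequality; instead one uses that for a dominant morphism the dimension of the target is at most that of the source, which is \cite[Tag 0DRB, 0DRD]{Stacks} in the scheme case, extended to algebraic stacks via a smooth presentation. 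The cleanest route is: choose smooth surjective presentations $V \to \cY^{\vec h}$ and $W \to \cX^{\vec h, \circ}$ by schemes of finite type; the composite $V \to \cX^{\vec h, \circ}$ lifts (after refining $W$) to a dominant morphism of schemes $V' \to W'$ with $W'$ dense in $W$, and for schemes a dominant morphism does not increase dimension; then translate back through the (constant) relative dimensions of the presentations.

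The only subtlety — and the point I would be most careful about — is making sure "scheme-theoretic image" behaves well here: the source $\cY^{\vec h}$ need not be reduced, so the scheme-theoretic image could a priori be smaller than the closure of the set-theoretic image, but never larger, and for the purpose of the dimension bound only the underlying reduced substack matters, and there the scheme-theoretic image and the closure of the set-theoretic image coincide. Since $\cX^{\vec h}_{\lsup LM_{m,n+2m}}$ and all the ambient stacks here are in fact reduced Emerton–Gee stacks (everything in this paper is reduced), there is no difficulty. So the proof is genuinely a one-line appeal: $\dim$(scheme-theoretic image) $\le$ $\dim$(source), exactly as recorded in Corollary \ref{cor:relative-dim}(1), applied to the defining morphism of $\cX^{\vec h, \circ}_{\lsup LM_{m, n+2m}}$.

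\begin{proof}
By Definition, $\cX^{\vec h, \circ}_{\lsup LM_{m, n+2m}}$ is the scheme-theoretic image of the morphism $\cY^{\vec h} \to \cX^{\vec h}_{\lsup LM_{m, n+2m}}$. All stacks in sight are of finite type over $\bFp$, so this morphism is quasi-compact and its scheme-theoretic image agrees, on underlying reduced substacks, with the closure of its set-theoretic image. In particular the induced morphism $\cY^{\vec h} \to \cX^{\vec h, \circ}_{\lsup LM_{m, n+2m}}$ is dominant. A dominant morphism of algebraic stacks of finite type over a field does not increase dimension: this follows as in Corollary \ref{cor:relative-dim}(1), by passing to smooth presentations by finite-type schemes and using that a dominant morphism of finite-type schemes satisfies $\dim(\text{target}) \le \dim(\text{source})$ (\cite[Tag 02JU, 0AFN, 0AFP]{Stacks}). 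Hence $\dim \cX^{\vec h, \circ}_{\lsup LM_{m, n+2m}} \le \dim \cY^{\vec h}$.
\end{proof}
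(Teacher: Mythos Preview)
Your proof is correct and takes essentially the same approach as the paper: the paper's proof is the one-liner ``It follows from Corollary \ref{cor:relative-dim},'' and your argument simply unpacks the content of that corollary (scheme-theoretic image of a morphism of finite-type stacks has dimension at most that of the source).
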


\begin{proof}
It follows from Corollary 
\ref{cor:relative-dim}.
\end{proof}

Let $x: \spec \bFp \to \cX_{\lsup LM_{m, n-2v_2m+2m}}^{\vec h'}$
be a point
where $\vec h' = (v_0', v_1', v_2', z_0', z_1', z_2')$,
that corresponds to the parameter $\tau$.
We will analyze the dimension of the fiber
$\cY_x^{\vec h}\to \spec\bFp$
 of $f$ at $x$.
The point $x$ factors through
$$
\spec \bFp \to [\spec \bFp/\Aut(x)]
\to \cY^{\vec h},
$$
where
$$
\Aut(x) := \Aut_{\GL_m}(\alpha)
\times
\Aut_{\bG_{n-2v_2m}}(\tau)
$$
So, $\Aut(x)$ acts on 
$\cY_x^{\vec h}$
by pullback;
and we can form the quotient
$[\cY_x^{\vec h}/\Aut(x)]$.

The $\bFp$-points of the coarse moduli space
$|\cY_x^{\vec h}|$
are tautologically identified with
the cohomology set $H^1(\Gal_F, U_{v_2m, n})$.
There exists a long exact sequence
$$
H^1(\Gal_F, Z_{v_2m, n})
\to
H^1(\Gal_F, U_{v_2m, n})
\to 
H^1(\Gal_F, V_{v_2m, n}).
$$
We have already that seen the image of
$H^1(\Gal_F, U_{v_2m, n})$
in
$H^1(\Gal_F, V_{v_2m, n})=:X$
is an affine cone $X^c$ defined by the kernel
of cup product.
All elements of $c\in X$
corresponds to a parameter
$(\alpha, c\rho):\Gal_F \to 
\lsup L\Res_{K/F}\GL_m\lsup LP_{v_2m, n}$,
whose
automorphism group
is equal to
$$
H^0(\Gal_F, V_{v_2m, n}) 
\rtimes (\GL_{v_2} \times \Aut(x)),
$$
so there is a surjective morphism
$$
[\cY_x^{\vec h}/\Aut(x)]
\to |\cY_x^{\vec h}| 
\to X^c \to [X^c/
H^0(\Gal_F, V_{v_2m, n}) 
\rtimes (\GL_{v_2} \times \Aut(x))
];
$$
whose fibers 
are all equal to
$[H^1(\Gal_F, Z_{v_2m, n})/H^0(\Gal_F, Z_{v_2m, n})]$
(the reader can check the fibers
by unravelling the definitions
of quotient stacks and fiber product stacks).

\begin{dfn}
Define $\delta_x := \dim X - \dim X^c \ge 0$.
\end{dfn}

So
\begin{align*}
\dim \cY^{\vec h}_x - \cancel{\dim \Aut(x)}
=&
\boxed{\dim H^1(\Gal_F, Z_{v_2m, n})
-
\dim H^0(\Gal_F, Z_{v_2m, n}) }
+ \\
& \dim X^c - (\dim H^0(\Gal_F, V_{v_2m, n})  +
v_2^2 + \cancel{\dim \Aut(x)})
\end{align*}

By the Euler characteristic,
we have
\begin{align*}
\boxed{\dim H^1(\Gal_F, V_{v_2m, n})
-\dim H^0(\Gal_F, V_{v_2m, n})}
&= v_2v_2' + 
[F:\bQ_p]\dim V_{v2_m, n}\text{,~and}
\\
\dim H^1(\Gal_F, Z_{v_2m, n})
-
\dim H^0(\Gal_F, Z_{v_2m, n})
&=
\dim H^2(\Gal_F, Z_{v_2m, n})
+
[F:\bQ_p]\dim Z_{v_2m, n}
\end{align*}
Note that $\dim H^2(\Gal_F, V_{v_2m, n})=v_2v_2'$.

\begin{lem}
\label{lem:Yx}
(1)
We have
\begin{align*}
\dim \cY^{\vec h}_x
&= v_2v_2' + [F:\bQ_p]\dim V_{v_2m,n}
-\delta_x + H^2(\Gal_F, Z_{v_2m,n}) + [F:\bQ_p]\dim Z_{v_2m, n}
-v_2^2
\\
&=
v_2v_2' - v_2^2
-\delta_x
+[F:\bQ_p]\dim U_{v_2m, n}
+ H^2(\Gal_F, Z_{v_2m,n}).
\end{align*}

(2)
$\dim H^2(\Gal_F, Z_{v_2m,n})=0$
if $\alpha(-2)\not\cong \theta(\alpha(-1))$,
and
$$
\dim H^2(\Gal_F, Z_{v_2m,n})=
\begin{cases}
\frac{v_2(v_2+1)}{2} & \text{symplectic or unitary} \\
\frac{v_2(v_2-1)}{2} & \text{orthogonal}.
\end{cases}
$$
if otherwise.
\end{lem}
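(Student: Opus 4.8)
The plan for part (1) is pure bookkeeping using the tower of stacks set up immediately before the statement. The surjection
$$
[\cY_x^{\vec h}/\Aut(x)]\longrightarrow[X^c/(H^0(\Gal_F,V_{v_2m,n})\rtimes(\GL_{v_2}\times\Aut(x)))]
$$
has every fiber isomorphic to $[H^1(\Gal_F,Z_{v_2m,n})/H^0(\Gal_F,Z_{v_2m,n})]$, so Lemma \ref{lem:fiber}(1) and (2) together give an equality of dimensions; adding back $\dim\Aut(x)$ yields
$$
\dim\cY_x^{\vec h}=\bigl(\dim H^1(\Gal_F,Z_{v_2m,n})-\dim H^0(\Gal_F,Z_{v_2m,n})\bigr)+\dim X^c-\dim H^0(\Gal_F,V_{v_2m,n})-v_2^2,
$$
which is exactly the boxed identity preceding the statement. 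I would then substitute $\dim X^c=\dim X-\delta_x=\dim H^1(\Gal_F,V_{v_2m,n})-\delta_x$, insert the two local Euler-characteristic identities recorded just above (one for $V_{v_2m,n}$, one for $Z_{v_2m,n}$), and use $\dim V_{v_2m,n}+\dim Z_{v_2m,n}=\dim U_{v_2m,n}$; collecting terms produces both displayed formulas in (1).

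For part (2) the tools are axiom (D2), local Tate duality, and Schur's lemma. By (D2), $H^2(\Gal_F,Z_{v_2m,n})=H^2(\Gal_K,\wt Z_{v_2m,n})^{\Delta}$, and by local Tate duality $H^2(\Gal_K,\wt Z_{v_2m,n})\cong H^0(\Gal_K,\wt Z_{v_2m,n}^\vee(1))^\vee$. As a $\Gal_K$-module $\wt Z_{v_2m,n}=\Mat_{v_2m\times v_2m}$ is the space of homomorphisms from the bottom diagonal block $\theta(\alpha(-1))^{\oplus v_2}$ to the top diagonal block $\alpha(-1)^{\oplus v_2}$ of $\rho$, so $\wt Z_{v_2m,n}^\vee(1)\cong\Hom(\alpha_K(-1),\theta(\alpha_K(-1))(1))^{\oplus v_2^2}$; since $\alpha$, hence $\alpha_K$, is irreducible, Schur's lemma shows this module has nonzero $\Gal_K$-invariants precisely when $\alpha_K(-1)\cong\theta(\alpha_K(-1))(1)$, i.e. $\alpha(-2)\cong\theta(\alpha(-1))$, which gives the vanishing statement. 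When the isomorphism holds, a second application of Schur identifies $H^2(\Gal_K,\wt Z_{v_2m,n})\cong\Mat_{v_2}(\bFp)$, with a basis indexed by the $v_2\times v_2$ block positions. It then remains to compute the induced $\j$-action on $\Mat_{v_2}(\bFp)$: by axiom (D6) it has the shape $Z\mapsto\varepsilon\,w_{v_2}Z^tw_{v_2}$ for a sign $\varepsilon$, and—exactly as in the proof of Lemma \ref{lem:inner}—$\varepsilon=+1$ in the symplectic and unitary cases and $\varepsilon=-1$ in the orthogonal case; since $w_{v_2}^2=1$, the fixed space of $Z\mapsto w_{v_2}Z^tw_{v_2}$ has dimension $v_2(v_2+1)/2$ and that of $Z\mapsto-w_{v_2}Z^tw_{v_2}$ has dimension $v_2(v_2-1)/2$, which are the stated values.

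The only genuinely delicate point is the sign $\varepsilon$ in the last step. The involution on $\wt Z_{v_2m,n}$ read off directly from the matrix presentations in Section \ref{sec:classic} carries the orthogonal sign in the unitary case too, so the extra $+1$ must come from the Schur identification $\Hom(\theta(\alpha_K(-1)),\alpha_K(-1))\cong\bFp(1)$ not being $\j$-equivariant on the nose—precisely the discrepancy that already forces the pairing of Lemma \ref{lem:inner} to be symmetric rather than alternating in the unitary case. Rather than recompute, I would deduce $\varepsilon$ from Lemma \ref{lem:inner} itself, applied with the $v_2$ multiplicity directions playing the role of the rows $\wt V_{I,i}$.
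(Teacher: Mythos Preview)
Your proposal is correct and, for part (1), is exactly the paper's approach: the paper's entire proof is the single line ``Note that $\dim X^c = \dim X - \delta_x = \dim H^1(\Gal_F, V_{v_2m, n})-\delta_x$'', which is precisely the substitution you describe, the remaining bookkeeping having already been laid out in the text preceding the lemma.

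For part (2) the paper gives no argument at all, so you are filling in what was left implicit. Your reasoning is sound. The deferral to Lemma~\ref{lem:inner} for the sign $\varepsilon$ is legitimate: from (D4) and $\j^2=1$ one has $c_2\cdot c_1=\j(c_1\cdot c_2)$, so the pairing of Lemma~\ref{lem:inner} is symmetric exactly when $\j$ acts by $+1$ on the one-dimensional $H^2(\Gal_K,\bFp e_{1,m})$ and alternating exactly when it acts by $-1$; this recovers $\varepsilon=+1$ for symplectic and unitary and $\varepsilon=-1$ for orthogonal without recomputation, and in particular absorbs the extra sign in the unitary case that you correctly flag. One small point: Lemma~\ref{lem:inner} is literally stated in the setup of Sections~3--4 (a character $\alpha$ with multiplicity $m$), whereas here $\alpha$ is an $m$-dimensional irreducible with multiplicity $v_2$; but once Schur collapses each $m\times m$ block of $H^2(\Gal_K,\wt Z_{v_2m,n})$ to a scalar, the proof of Lemma~\ref{lem:inner} transports verbatim with $v_2$ playing the role of $m$, so the analogy you invoke is unproblematic.
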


\begin{proof}
Note that $\dim X^c = \dim X - \delta_x
=\dim H^1(\Gal_F, V_{v_2m, n})-\delta_x$.
\end{proof}

\begin{prop}
\label{prop:X-ind}
We have
\begin{align*}
\dim \cX_{\lsup LM_{m, n+2m}}^{\vec h, \circ}
\le \max_{x\in \cX_{\lsup LM_{m, n-2v_2m+2m}}^{
\vec h'}(\bFp)} &
\dim \cX_{\lsup LM_{m, n-2v_2m+2m}}^{\vec h'} + 
\dim \cY_x^{\vec h}
\end{align*}
\end{prop}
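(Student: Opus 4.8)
The plan is to combine the dimension comparison of Lemma \ref{cor:dense} with the general fibre-dimension bound of Lemma \ref{lem:fiber}(1), applied to the morphism $f$. Since $\cX_{\lsup LM_{m, n+2m}}^{\vec h, \circ}$ is by definition the scheme-theoretic image of $\cY^{\vec h}$, Lemma \ref{cor:dense} (a consequence of Corollary \ref{cor:relative-dim}) already gives $\dim \cX_{\lsup LM_{m, n+2m}}^{\vec h, \circ} \le \dim \cY^{\vec h}$, so it is enough to prove $\dim \cY^{\vec h} \le \dim \cX_{\lsup LM_{m, n-2v_2m+2m}}^{\vec h'} + \max_{x} \dim \cY_x^{\vec h}$, the maximum being over the $\bFp$-points of the $\vec h'$-stratum. (Strictly speaking the estimate below produces a $\max$ over the finitely many admissible $\vec h'$ as well; in the applications $\vec h'$ is held fixed and the argument is run over the single stratum, which is the form recorded in the statement.)

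To bound $\dim \cY^{\vec h}$, I would stratify along $f$. The preimage $f^{-1}\bigl(\cX_{\lsup LM_{m, n-2v_2m+2m}}^{\vec h'}\bigr)$ is locally closed in $\cY^{\vec h}$, and these preimages cover $\cY^{\vec h}$ as $\vec h'$ ranges over the finitely many tuples with bounded Galois-cohomology entries, so $\dim \cY^{\vec h}$ is the maximum of their dimensions. Fix $\vec h'$ with this preimage nonempty. The restriction of $f$ to $f^{-1}\bigl(\cX_{\lsup LM_{m, n-2v_2m+2m}}^{\vec h'}\bigr)$ is a finite-type morphism onto its image, a constructible substack of $\cX_{\lsup LM_{m, n-2v_2m+2m}}^{\vec h'}$, hence of dimension at most $\dim \cX_{\lsup LM_{m, n-2v_2m+2m}}^{\vec h'}$. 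For each geometric point $x$ of that image, the fibre of $f$ is by construction precisely the stack $\cY_x^{\vec h}$ studied in the discussion preceding the statement, so every fibre has dimension at most $\max_x \dim \cY_x^{\vec h}$. Lemma \ref{lem:fiber}(1) therefore yields $\dim f^{-1}\bigl(\cX_{\lsup LM_{m, n-2v_2m+2m}}^{\vec h'}\bigr) \le \dim \cX_{\lsup LM_{m, n-2v_2m+2m}}^{\vec h'} + \max_x \dim \cY_x^{\vec h}$; taking the maximum over $\vec h'$ and combining with Lemma \ref{cor:dense} proves the Proposition.

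This argument is essentially bookkeeping, and the only points that demand any care — the closest thing here to an obstacle — are: $f$ need not be surjective, so one must pass to its constructible image before invoking Lemma \ref{lem:fiber}(1); and one must use that all the reduced Emerton--Gee stacks in play are of finite type over $\bFp$, so that dimensions are well-behaved and only finitely many tuples $\vec h'$ occur (the finiteness statement of \cite{L23B}). The actual value of $\dim \cY_x^{\vec h}$ is not needed for this proof, only its finiteness, and it is supplied separately by Lemma \ref{lem:Yx}.
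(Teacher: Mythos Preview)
Your proof is correct and follows exactly the paper's approach: the paper's proof is the single line ``It follows from Lemma \ref{lem:fiber} and Lemma \ref{cor:dense},'' and your argument is a faithful unpacking of that sentence, with the same two inputs (the image bound from Lemma \ref{cor:dense} and the fibre-dimension bound from Lemma \ref{lem:fiber}(1) applied to $f$). The extra care you take about surjectivity and the finite stratification by $\vec h'$ is implicit in the paper's one-line citation.
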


\begin{proof}
It follows from Lemma \ref{lem:fiber} and
Lemma \ref{cor:dense}.
\end{proof}

Let $u_n$
be the dimension
of the unipotent radical of 
a Borel of $\lsup L\Res_{F/\bQ_p}G_{n}$.
It is expected that
$\dim \cX_{\lsup LM_{m, n+2m}}^{\vec h}$
should be slightly smaller than $u_{n}$.

We record the value of $u_n$:

\begin{dfn}
\label{lem:val-u}
Define
$
u_{n}
=
[F:\bQ_p]
\begin{cases}
n^2/4 & \text{symplectic} \\
\lceil n^2/4-n/2\rceil & \text{orthogonal}\\
n(n-1)/2 & \text{unitary}.
\end{cases}
$
\end{dfn}

\begin{dfn}
Define 
\begin{align*}
\delta_{m, n+2m}^{\vec h}&:=u_{n} -
 \dim\cX_{\lsup LM_{m, n+2m}}^{\vec h},\\
\delta_{m, n+2m}^{\vec h, \circ}&:=u_{n} -
 \dim\cX_{\lsup LM_{m, n+2m}}^{\vec h, \circ}.
\end{align*}
\end{dfn}

Note that
$$
u_{n} - u_{n-2v_2m}=
[F:\bQ_p](
\frac{v_2m(v_2m-1)}{2}+
\dim U_{v_2m, n}).
$$
Thus,

\begin{prop}
We have
\label{prop:universal}
\begin{align*}
\delta_{m, n+2m}^{\vec h, \circ}
\ge& \min_{x\in \cX_{\lsup LM_{m, n-2v_2m+2m}}^{\vec h'}(\bFp)}
\delta_{m, n-2v_2m+2m}^{\vec h'} + \delta_x
-v_2v_2'+v_2^2\\
&+[F:\bQ_p]\frac{v_2m(v_2m-1)}{2}
-\dim H^2(\Gal_F, Z_{v_2m, n}).
\end{align*}
\end{prop}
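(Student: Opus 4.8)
The plan is to derive this inequality purely formally, by substituting the estimates already assembled in this section into one another; no new geometric input is required. Begin with the definition $\delta_{m, n+2m}^{\vec h, \circ} = u_n - \dim \cX_{\lsup LM_{m, n+2m}}^{\vec h, \circ}$ and apply Proposition \ref{prop:X-ind}, which bounds $\dim \cX_{\lsup LM_{m, n+2m}}^{\vec h, \circ}$ above by $\max_x\bigl(\dim \cX_{\lsup LM_{m, n-2v_2m+2m}}^{\vec h'} + \dim \cY_x^{\vec h}\bigr)$, the maximum being taken over $\bFp$-points $x$ of the target stack (and hence implicitly over the finitely many strata $\vec h'$ in which such $x$ can land). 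Subtracting from $u_n$ turns the maximum into a minimum, giving
\[
\delta_{m, n+2m}^{\vec h, \circ} \ge \min_x\Bigl( u_n - \dim \cX_{\lsup LM_{m, n-2v_2m+2m}}^{\vec h'} - \dim \cY_x^{\vec h} \Bigr).
\]

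Next I would rewrite $u_n - \dim \cX_{\lsup LM_{m, n-2v_2m+2m}}^{\vec h'}$ as $(u_n - u_{n-2v_2m}) + \delta_{m, n-2v_2m+2m}^{\vec h'}$ and plug in the two displayed formulas preceding the proposition: the telescoping identity $u_n - u_{n-2v_2m} = [F:\bQ_p]\bigl(\tfrac{v_2m(v_2m-1)}{2} + \dim U_{v_2m, n}\bigr)$, together with the first formula of Lemma \ref{lem:Yx}, namely $\dim \cY_x^{\vec h} = v_2 v_2' - v_2^2 - \delta_x + [F:\bQ_p]\dim U_{v_2m, n} + \dim H^2(\Gal_F, Z_{v_2m, n})$. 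The two occurrences of $[F:\bQ_p]\dim U_{v_2m, n}$ cancel, and collecting what is left gives
\begin{align*}
& u_n - \dim \cX_{\lsup LM_{m, n-2v_2m+2m}}^{\vec h'} - \dim \cY_x^{\vec h} \\
&\quad = \delta_{m, n-2v_2m+2m}^{\vec h'} + \delta_x - v_2 v_2' + v_2^2 + [F:\bQ_p]\tfrac{v_2m(v_2m-1)}{2} - \dim H^2(\Gal_F, Z_{v_2m, n}),
\end{align*}
which is exactly the expression under the minimum in the statement; taking $\min_x$ of both sides finishes the argument.

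There is no genuine obstacle here --- all the weight has already been carried by Lemma \ref{lem:Yx} (the $\Aut(x)$-cancellations, the $\GL_{v_2}$ and $H^0$ contributions to the automorphism groups, and the Euler-characteristic bookkeeping for $V_{v_2m, n}$ and $Z_{v_2m, n}$) and by the computation of $u_n - u_{n-2v_2m}$. The only point that deserves a moment's care is the sign reversal in passing from the maximum of Proposition \ref{prop:X-ind} to a minimum: one must check that $\delta_x$, the coordinate $v_2'$ of $\vec h'$, and $\dim H^2(\Gal_F, Z_{v_2m, n})$ --- all of which depend on $x$, the last via the self-adjointness condition governed by part (2) of Lemma \ref{lem:Yx} --- are being minimized jointly with $\delta_{m, n-2v_2m+2m}^{\vec h'}$ over a single index set, so that no term escapes the minimum.
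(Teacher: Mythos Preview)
Your proposal is correct and follows exactly the paper's approach: the paper's proof is the single line ``Combine Lemma \ref{lem:Yx} and Proposition \ref{prop:X-ind},'' and you have simply written out that combination explicitly, using the displayed identity $u_n - u_{n-2v_2m} = [F:\bQ_p]\bigl(\tfrac{v_2m(v_2m-1)}{2} + \dim U_{v_2m,n}\bigr)$ to convert between the two $\delta$'s.
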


\begin{proof}
Combine Lemma \ref{lem:Yx}
and Proposition \ref{prop:X-ind}.
\end{proof}

Note that
$$
[F:\bQ_p]v_2m(v_2m-1) - v_2(v_2-1) \ge v_2m(m-1),
$$
and observe that
$$
-v_2v_2'+v_2^2=\frac{1}{2}(v_2^2-v_2^{\prime2}) + \frac{1}{2}
(v_2-v_2')^2
\ge \frac{1}{2}(v_2^2-v_2^{\prime2})
$$
we get a slightly weaker inequality

\begin{cor}
We have
\label{cor:universal}
\begin{align*}
\delta_{m, n+2m}^{\vec h, \circ}
-\frac{1}{2}v_2^2
\ge& \min_{x\in \cX_{\lsup LM_{m, n-2v_2m+2m}}^{\vec h'}(\bFp)}
\delta_{m, n-2v_2m+2m}^{\vec h'} 
-\frac{1}{2}v_2^{\prime2}
\\
&+\delta_x+\frac{v_2(v_2-1)}{2}
-\dim H^2(\Gal_F, Z_{v_2m, n})
+ \frac{m(m-1)}{2}v_2.
\end{align*}
\end{cor}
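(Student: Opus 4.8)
The plan is to obtain this inequality formally from Proposition \ref{prop:universal} by two elementary manipulations, which are precisely the two displayed inequalities stated immediately before the Corollary; I do not expect a genuine obstacle here, only monotone bookkeeping.

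First I would take the inequality of Proposition \ref{prop:universal} and subtract $\tfrac12 v_2^2$ from both sides. The left-hand side becomes exactly $\delta_{m,n+2m}^{\vec h,\circ}-\tfrac12 v_2^2$, matching the Corollary. On the right, the contribution $-v_2v_2'+v_2^2$ turns into $-v_2v_2'+\tfrac12 v_2^2$, and here I invoke the identity
\[
-v_2v_2'+v_2^2=\tfrac12\bigl(v_2^2-v_2^{\prime2}\bigr)+\tfrac12\bigl(v_2-v_2'\bigr)^2,
\]
which gives $-v_2v_2'+\tfrac12 v_2^2=-\tfrac12 v_2^{\prime2}+\tfrac12(v_2-v_2')^2\ge -\tfrac12 v_2^{\prime2}$. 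Discarding the nonnegative square replaces this contribution by the lower bound $-\tfrac12 v_2^{\prime2}$; since $v_2'$ is the third entry of $\vec h'$ and hence does not depend on the point $x\in\cX_{\lsup LM_{m, n-2v_2m+2m}}^{\vec h'}(\bFp)$, it may be placed under the minimum over $x$, exactly as in the statement.

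Second, I would estimate the Frobenius-degree term. Because $[F:\bQ_p]\ge 1$ and
\[
v_2m(v_2m-1)-v_2(v_2-1)-v_2m(m-1)=v_2(v_2-1)(m^2-1)\ge 0,
\]
we obtain
\[
[F:\bQ_p]\frac{v_2m(v_2m-1)}{2}\ \ge\ \frac{v_2(v_2-1)}{2}+\frac{m(m-1)}{2}v_2 ,
\]
which is precisely the shape of the last two terms on the right-hand side of the Corollary. Substituting this lower bound while leaving $\delta_x$ and $-\dim H^2(\Gal_F,Z_{v_2m,n})$ untouched under the minimum yields exactly the claimed inequality. The only point deserving a moment's care is the legitimacy of passing $-\tfrac12 v_2^{\prime2}$ under the minimum, which is harmless because $v_2'$ is determined by $\vec h'$ alone and is therefore constant along the relevant locus.
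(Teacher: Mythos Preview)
Your proof is correct and follows exactly the paper's approach: the two inequalities you invoke are precisely the two displayed observations the paper records immediately before the Corollary. Your explicit factorization $v_2m(v_2m-1)-v_2(v_2-1)-v_2m(m-1)=v_2(v_2-1)(m^2-1)$ is a nice way to justify the second of these, and the remark about pulling $-\tfrac12 v_2^{\prime2}$ under the minimum is valid for exactly the reason you give.
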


\begin{prop}
\label{prop:deltap}
We have
$$
\delta_x+\frac{v_2(v_2-1)}{2}
-\dim H^2(\Gal_F, Z_{v_2m, n})+
\frac{m(m-1)}{2}v_2
\ge
\min(\delta_x, \frac{m(m-1)}{2}v_2)
\ge 0.
$$
\end{prop}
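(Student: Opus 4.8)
The plan is to substitute the explicit value of $\dim H^2(\Gal_F, Z_{v_2m,n})$ provided by Lemma \ref{lem:Yx}(2) and then run through the resulting short list of cases, all of which are elementary except one which invokes the cone theorem.

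First I would dispose of the second inequality $\min(\delta_x,\frac{m(m-1)}{2}v_2)\ge 0$: it holds because $\delta_x\ge 0$ by its very definition and $\frac{m(m-1)}{2}v_2\ge 0$ for all non-negative integers $m,v_2$. For the first inequality, write $z:=\dim H^2(\Gal_F, Z_{v_2m,n})$. By Lemma \ref{lem:Yx}(2) there are precisely three possibilities: $z=0$; or $G$ is orthogonal and $z=\frac{v_2(v_2-1)}{2}$; or $G$ is symplectic or unitary and $z=\frac{v_2(v_2+1)}{2}$. In the first two cases $\frac{v_2(v_2-1)}{2}-z\ge 0$, so the left-hand side of the assertion is at least $\delta_x+\frac{m(m-1)}{2}v_2\ge\delta_x\ge\min(\delta_x,\frac{m(m-1)}{2}v_2)$, using only non-negativity of the discarded terms.

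The one case needing a genuine estimate is $G$ symplectic or unitary with $z=\frac{v_2(v_2+1)}{2}$. Here $\frac{v_2(v_2-1)}{2}-z=-v_2$, so the left-hand side equals $\delta_x-v_2+\frac{m(m-1)}{2}v_2$. If $m\ge 2$ then $\frac{m(m-1)}{2}\ge 1$, and this is $\ge\delta_x\ge\min(\delta_x,\frac{m(m-1)}{2}v_2)$. If $m=1$ then $\frac{m(m-1)}{2}v_2=0$, so the right-hand side is $0$ and it suffices to show $\delta_x\ge v_2$; I would read this off from Corollary \ref{cor:cone}(2) applied to the $\Res_{K/F}\GL_{v_2m}$-block, for which the block-size parameter equals $v_2m=v_2$, giving $\delta_x=\dim X-\dim X^c\ge v_2m=v_2$. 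The application is legitimate because $G$ is symplectic or unitary, as Theorem \ref{thm:cone} demands, and because $z\ne 0$ (so in particular $v_2\ge 1$) forces us into the non-degenerate branch of the affine-cone construction, where $X^c$ is a proper subcone of $X$ and Theorem \ref{thm:cone} is in force.

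I expect the only delicate point to be this last reduction: one must check that the cone theorem's hypotheses — the symplectic/unitary restriction and the non-degenerate branch where $\dim H^2$ is maximal — genuinely hold exactly when $m=1$ and $z=\frac{v_2(v_2+1)}{2}$. Everything else is arithmetic of non-negative integers, and I would keep that part terse.
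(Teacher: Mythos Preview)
Your proposal is correct and follows essentially the same approach as the paper: both split off the trivial case where $\dim H^2(\Gal_F,Z_{v_2m,n})-\frac{v_2(v_2-1)}{2}\le 0$, then in the remaining symplectic/unitary case distinguish $m\ge 2$ (where $\frac{m(m-1)}{2}\ge 1$ absorbs the deficit $-v_2$) from $m=1$ (where Corollary~\ref{cor:cone}(2), applied with block size $v_2m=v_2$, gives $\delta_x\ge v_2$). The paper's write-up is terser but the logic is identical.
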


\begin{proof}
It is clear unless 
$\dim H^2(\Gal_F, Z_{v_2m,n})>0$,
so we assume that.
Recall that
$$
\dim H^2(\Gal_F, Z_{v_2m,n})=
\begin{cases}
\frac{v_2(v_2+1)}{2} & \text{symplectic or unitary} \\
\frac{v_2(v_2-1)}{2} & \text{orthogonal}.
\end{cases}
$$
When $m=1$,
we only need to show
$$
\delta_x
\ge
\begin{cases}
v_2 & \text{symplectic or unitary} \\
0 & \text{orthogonal},
\end{cases}
$$
which follows from Corollary \ref{cor:cone};
when $m>1$, it follows from
$m(m-1)/2\ge 1$.
\end{proof}

Combine
Proposition
\ref{prop:deltap}
and Corollary 
\ref{cor:universal},
we see that
\begin{align}
\delta_{m, n+2m}^{\vec h, \circ}
-\frac{1}{2}v_2^2
\ge& \min_{x\in \cX_{\lsup LM_{m, n-2v_2m+2m}}^{\vec h'}(\bFp)}
\delta_{m, n-2v_2m+2m}^{\vec h'} 
-\frac{1}{2}v_2^{\prime2}.
\label{eq:universal}
\end{align}

\section{Dimension calculation}

\begin{lem}
\label{lem:degen}
Fix $m$.
Let $P(n)$ be the statement that
$$\delta^{\vec h}_{m, n+2m}\ge v_2^2/2$$
and let  $P^\circ(n)$ be the statement that
$$\delta^{\vec h, \circ}_{m, n+2m}\ge v_2^2/2.$$
We have 
$$P(<n) \Rightarrow (P(n) \Leftrightarrow P^\circ(n)).$$
\end{lem}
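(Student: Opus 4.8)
The plan is to treat the two implications of $P(n)\Leftrightarrow P^\circ(n)$ separately. The implication $P(n)\Rightarrow P^\circ(n)$ needs no hypothesis at all: by construction $\cX^{\vec h,\circ}_{\lsup LM_{m,n+2m}}$ is a closed substack of $\cX^{\vec h}_{\lsup LM_{m,n+2m}}$, so $\dim\cX^{\vec h,\circ}_{\lsup LM_{m,n+2m}}\le\dim\cX^{\vec h}_{\lsup LM_{m,n+2m}}$ and hence $\delta^{\vec h,\circ}_{m,n+2m}\ge\delta^{\vec h}_{m,n+2m}\ge v_2^2/2$. So the content is the reverse implication $P^\circ(n)\Rightarrow P(n)$, which I would prove under the standing hypothesis $P(<n)$.

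For that I would stratify $\cX^{\vec h}_{\lsup LM_{m,n+2m}}$ by the size of the anisotropic part of the parameter. Given $(\alpha,\rho)$: if $\alpha(-1)\not\cong\theta(\alpha(-1))$, then by Lemma \ref{lem:sad-nsad}(1) the representation $\rho$ has a totally isotropic $\alpha(-1)^{\oplus v_2}$, hence the block shape defining $\cY^{\vec h}$, so $(\alpha,\rho)$ lies in $\cX^{\vec h,\circ}_{\lsup LM_{m,n+2m}}$; if $\alpha(-1)\cong\theta(\alpha(-1))$, then by Lemma \ref{lem:sad-nsad}(2) we may write $v_2=v_2^i+v_2^a$ with $\rho=\rho'\perp\rho^a$, where $\rho^a\cong\alpha(-1)^{\oplus v_2^a}$ is the maximal anisotropic $\alpha(-1)$-isotypic summand (a sum of elliptic parameters into $\lsup LG_{v_2^a m}$) and $\rho'$ carries a totally isotropic $\alpha(-1)^{\oplus v_2^i}$; when $v_2^a=0$ this again exhibits $(\alpha,\rho)$ in $\cX^{\vec h,\circ}_{\lsup LM_{m,n+2m}}$. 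Writing $\mathcal{S}_a$ ($1\le a\le v_2$) for the locally closed stratum where $v_2^a=a$, one gets
$$
\dim\cX^{\vec h}_{\lsup LM_{m,n+2m}}\le\max\Big(\dim\cX^{\vec h,\circ}_{\lsup LM_{m,n+2m}},\ \max_{1\le a\le v_2}\dim\mathcal{S}_a\Big),
$$
and $P^\circ(n)$ bounds the first argument by $u_n-v_2^2/2$, so everything reduces to bounding each $\mathcal{S}_a$.

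To bound $\mathcal{S}_a$ I would use the morphism $(\alpha,\rho)\mapsto(\alpha,\rho')$, which carries $\mathcal{S}_a$ into a finite union of strata $\cX^{\vec h_1,\circ}_{\lsup LM_{m,n-am+2m}}$ whose third invariant is at least $v_2-a$ (it lands in the $\circ$-locus since $\rho'$ has no anisotropic $\alpha(-1)$-part). Because $n-am<n$, the hypothesis $P(<n)$ gives $P(n-am)$, hence $P^\circ(n-am)$ by the direction already proved, so the base has dimension $\le u_{n-am}-(v_2-a)^2/2$. Over $(\alpha,\rho')$ the fibre records the elliptic summand $\rho^a\cong\alpha(-1)^{\oplus a}$ together with the gluing identifying $\rho'\perp\rho^a$ with $\rho$; the parameters $\rho^a$ of this type form a single point whose automorphism group is, by Lemma \ref{lem:aut}, $\GSp_a$, $\GO_a$ or $\Or_a$ according to type, and since the anisotropic summand is intrinsic to $\rho$ the gluing only creates additional automorphisms, so by Lemma \ref{lem:fiber} the fibre has dimension $\le-\dim\Aut_{\bG_{am}}(\alpha(-1)^{\oplus a})$. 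Combining, $\dim\mathcal{S}_a\le u_{n-am}-(v_2-a)^2/2-\dim\Aut_{\bG_{am}}(\alpha(-1)^{\oplus a})$, and it remains to verify
$$
av_2-\tfrac{1}{2}a^2\le\big(u_n-u_{n-am}\big)+\dim\Aut_{\bG_{am}}(\alpha(-1)^{\oplus a}),
$$
for which I would feed in the explicit values of $u$ from Definition \ref{lem:val-u}, the bound $\dim\Aut_{\bG_{am}}(\alpha(-1)^{\oplus a})\ge a(a-1)/2$, and the inequality $v_2m\le am+(n-am)/2$ — valid because the $v_2-a$ isotropic copies of $\alpha(-1)$ in $\rho'$ fit, together with their duals, inside the $(n-am)$-dimensional ambient space — which yields $av_2-\tfrac{1}{2}a^2\le an/(2m)$.

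The main obstacle is this last numerical inequality at the boundary. For $m\ge 2$ it holds with room, because $u_n-u_{n-am}$ is of size $[F:\bQ_p]\cdot am(2n-am)/4$ (or larger in the unitary case) and already dominates $an/(2m)$. For $m=1$ it is tight and the three classical types must be separated, owing to the $\pm1$ discrepancies among $\dim\GSp_a$, $\dim\GO_a$ and $\dim\Or_a$; moreover the genuinely degenerate strata must be handled directly — in particular when $\rho'=0$ (so $n=am$ and $\mathcal{S}_a$ must be bounded against $\dim\cX^{\vec h_1,\circ}_{\lsup LM_{m,2m}}$ rather than via $P^\circ(0)$), when $v_2=a$ is forced, or when $n$ is too small for the ambient-dimension estimate. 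Pinning down these boundary cases, and keeping the fibre-dimension bookkeeping consistent with the precise normalisation of $\dim\cX^{\vec h}_{\lsup LM_{m,n+2m}}$ relative to its projection to $\cX_{\lsup L\Res_{K/F}\GL_m}$, is where the real work lies.
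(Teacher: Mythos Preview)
Your approach is essentially the paper's: stratify by $v_2^a$ via Lemma~\ref{lem:sad-nsad}(2), bound the $(\alpha,\rho')$-base using $P(n-v_2^am)\subset P(<n)$, and control the $\rho^a$-fibre. The only difference is how the fibre is packaged. You bound it by $-\dim\Aut_{\bG_{v_2^am}}(\rho^a)$ and then feed in Lemma~\ref{lem:aut}; the paper instead bounds it by $u_{v_2^am}-\tfrac12(v_2^a)^2$, which is exactly the elliptic base-case inequality (Lemma~\ref{lem:dim-ell} together with the numerical check appearing in the proof of Theorem~\ref{thm:universal}) applied to $(\alpha,\rho^a)\in\cX_{\lsup LM_{m,v_2^am+2m}}^{\Ell}$. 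With this packaging the sufficient condition collapses to the symmetric
\[
v_2^i\,v_2^a\;\le\;u_n-u_{n-v_2^am}-u_{v_2^am},
\]
and this follows in one line from $n\ge(2v_2^i+v_2^a)m$ and Definition~\ref{lem:val-u}, uniformly in the type of $G$. All of the case analysis you anticipate for $m=1$, as well as the boundary case $\rho'=0$ (where the inequality degenerates to $0\le 0$), evaporates. Two minor points: your detour through the $\circ$-locus on the base is unnecessary, since $P(n-v_2^am)$ already bounds the full stratum $\cX^{\vec h_1}_{\lsup LM_{m,(n-v_2^am)+2m}}$; and $P(0)$ is vacuously true, so the case $n=v_2^am$ poses no logical problem.
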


We will prove it later.
As a consequence of Lemma \ref{lem:degen},
the reader can pretend $\delta^{\vec h}_{m, n+2m}=
\delta^{\vec h, \circ}_{m, n+2m}$
in this section.

\begin{thm}
\label{thm:universal}
(1) 
If $G$ is orthogonal, then
$\delta_{m, n+2m}^{\vec h} \ge \frac{1}{2}v_2^2+
\min_x\delta_x.$

(2) If $G$ is either symplectic or unitary, then
$\delta_{m, n+2m}^{\vec h} \ge \frac{1}{2}v_2^2$.

(3) If $G$ is either symplectic or unitary
and
moreover $v_2=2$ and $m=1$,
then
$\delta_{m, n+2m}^{\vec h} \ge 3$.
\end{thm}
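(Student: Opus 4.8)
The plan is a strong induction on $n$ (with $m$ fixed throughout). The engine is the recursion packaged in Corollary~\ref{cor:universal} together with Proposition~\ref{prop:deltap}, which relates $\delta^{\vec h,\circ}_{m,n+2m}$ to $\delta^{\vec h'}_{m,n-2v_2m+2m}$; the bridge between $\delta^{\vec h}$ and $\delta^{\vec h,\circ}$ is Lemma~\ref{lem:degen}, which says that once the statement $P(n')$ is known for all $n'<n$, one has $\delta^{\vec h}_{m,n+2m}\ge\tfrac12 v_2^2\iff\delta^{\vec h,\circ}_{m,n+2m}\ge\tfrac12 v_2^2$. So the induction runs: assuming Theorem~\ref{thm:universal} for all $n'<n$ (hence in particular $P(n')$ for $n'<n$), I reduce the three claims for $n$ to the corresponding statements for $\cX^{\vec h,\circ}_{\lsup LM_{m,n+2m}}$; for the sharper parts (1) and (3) one additionally re-examines the proof of Lemma~\ref{lem:degen}, noting that the complement of the $\circ$-locus consists of ``fully degenerate'' parameters that split off elliptic pieces, hence is bounded by the induction hypothesis for strictly smaller groups together with Lemmas~\ref{lem:dim-ell} and~\ref{lem:aut}, and in fact stays within the sharper bound. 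The only configurations not reached by the recursion are those with $v_2=0$ (where the target is $\delta\ge0$ and $\delta^{\vec h}=\delta^{\vec h,\circ}$); these form the base case and are checked directly. Note $v_2m\le n/2$ always holds, since an isotropic subrepresentation is at most half-dimensional, so no ``$n$ too small'' case occurs when $v_2>0$.

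For the inductive step, fix $v_2>0$ and a point $x\in\cX^{\vec h'}_{\lsup LM_{m,n-2v_2m+2m}}(\bFp)$, corresponding to $\tau$, attaining the minimum in Corollary~\ref{cor:universal}. Since $n-2v_2m<n$, the induction hypothesis gives $P(n-2v_2m)$, i.e. $\delta^{\vec h'}_{m,n-2v_2m+2m}-\tfrac12 v_2^{\prime2}\ge0$, so Corollary~\ref{cor:universal} collapses to
$$
\delta^{\vec h,\circ}_{m,n+2m}-\tfrac12 v_2^2\ \ge\ \delta_x+\tfrac{v_2(v_2-1)}{2}-\dim H^2(\Gal_F,Z_{v_2m,n})+\tfrac{m(m-1)}{2}v_2 .
$$
If $G$ is orthogonal, Lemma~\ref{lem:Yx}(2) gives $\dim H^2(\Gal_F,Z_{v_2m,n})\le\tfrac{v_2(v_2-1)}{2}$, so the right-hand side is $\ge\delta_x+\tfrac{m(m-1)}{2}v_2\ge\delta_x\ge\min_x\delta_x$: this is (1), granting the reduction to the $\circ$-locus above. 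If $G$ is symplectic or unitary, Proposition~\ref{prop:deltap} says the right-hand side is $\ge\min(\delta_x,\tfrac{m(m-1)}{2}v_2)\ge0$: this is (2), now using Lemma~\ref{lem:degen} directly. Both cases need nothing beyond the dichotomy on whether $H^2(\Gal_F,Z_{v_2m,n})$ vanishes, which is already absorbed into Proposition~\ref{prop:deltap}.

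Part (3) is where the real work lies. Here $G$ is symplectic or unitary, $m=1$ and $v_2=2$, so in the display $\tfrac12 v_2^2=2$, $\tfrac{m(m-1)}{2}v_2=0$ and $\tfrac{v_2(v_2-1)}{2}=1$, while $\dim H^2(\Gal_F,Z_{2,n})\in\{0,3\}$ by Lemma~\ref{lem:Yx}(2). If it is $0$, the display gives $\delta^{\vec h,\circ}_{1,n+2}-2\ge\delta_x+1\ge1$, hence $\delta^{\vec h}_{1,n+2}\ge3$. If it is $3$, we need $\delta_x\ge3$, and here the crude bound $\delta_x\ge v_2m=2$ of Corollary~\ref{cor:cone} is insufficient: one must invoke the sharp estimate Corollary~\ref{cor:cone-m2}, which applies precisely because the $\GL$-block of the relevant parabolic $\lsup LP_{2,n}$ has size $v_2m=2$, and which yields $\delta_x\ge\min\!\big(3,\,1+\dim V_{2,n}/2\big)=\min(3,n-3)$; so for $n\ge6$ we get $\delta_x\ge3$ and are done. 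The residual cases $n=4,5$ cannot be forced through the recursion and must be handled by a direct dimension count of $\cX^{\vec h,\circ}_{1,n+2}$: the key point is that the branch $\dim H^2(\Gal_F,Z_{2,n})\ne0$ is exactly the condition $\alpha=\beta(1)$, which pins the corner character $\alpha$ down to finitely many choices in terms of $\tau$, so the $\GL_1$-factor contributes nothing to the dimension, and one extracts $\delta^{\vec h,\circ}_{1,n+2}\ge3$ from the Euler-characteristic identities for $H^\bullet(\Gal_F,Z_{2,n})$, the bound $\dim H^2(\Gal_F,Z_{2,n})\le\dim Z_{2,n}$, and $[F:\bQ_p]\ge1$. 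I expect this short bouquet of explicit small-$n$ computations --- and, secondarily, the verification that passage from the $\circ$-locus to the full $\cX^{\vec h}$ is harmless for the sharper statements (1) and (3), not merely for $P$ --- to be the main obstacle; everything else is a formal assembly of the inequalities from the preceding sections.
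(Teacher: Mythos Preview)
Your inductive framework for parts (1) and (2) matches the paper's, but you have misidentified the base case. You claim the base case is $v_2=0$, to be ``checked directly''; but for $v_2=0$ the target $\delta^{\vec h}_{m,n+2m}\ge 0$ is precisely the assertion $\dim\cX^{\vec h}_{\lsup LM_{m,n+2m}}\le u_n$, and since $\cX^{\vec h}_{\lsup LM_{m,n+2m}}$ is a locally closed piece of $\cX^{\text{irr}}_{\lsup L\Res\GL_m}\times\cX_{\lsup LG_n}$, this reduces to $\dim\cX_{\lsup LG_n}\le u_n$, which is Corollary~\ref{cor:main} for $G_n$ --- exactly what the theorem is being used to establish. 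When $v_2=0$ the recursion $n\mapsto n-2v_2m$ is stationary and Lemma~\ref{lem:degen} is vacuous, so your induction does not bottom out there. The paper's base case is instead the \emph{elliptic} locus $\cX^{\Ell,\vec h}_{\lsup LM_{m,n+2m}}$: this is where both the $\circ$-recursion and the degeneration of Lemma~\ref{lem:degen} terminate, and Lemma~\ref{lem:dim-ell} together with Lemma~\ref{lem:aut} give the direct bound $u_n-\dim\cX^{\Ell,\vec h}\ge u_n+1+\dim\bG^{\ad}_{v_2m}\ge v_2^2/2$ (using $u_n\ge v_2^2-v_2$ from $n\ge 2v_2$). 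Once the elliptic bound is in place, every non-elliptic $\rho$ --- including those with $v_2=0$ --- is reached by parabolic reduction to strictly smaller $n$, and the induction closes.

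For part (3) your approach and the paper's coincide in spirit: both feed Corollary~\ref{cor:cone-m2} (applied to the cone in $\lsup LP_{2,n}$, block size $v_2m=2$) into the recursion. The paper compresses this to the single line $\delta^{\vec h}_{1,n+2}\ge\min(3,1+n-2)$ and observes $n\ge 4$ suffices; your more cautious reading of Corollary~\ref{cor:cone-m2} produces $\min(3,n-3)$ and flags $n=4,5$ as residual. The discrepancy is in the evaluation of $\dim V_m/m$ and of the lower bound for $h$ via Euler characteristic; the paper's arithmetic absorbs the corner cases, whereas your version leaves them to a separate direct count. Either way the argument goes through, but your proposed handling of $n=4,5$ by exploiting that $\alpha$ is pinned down up to finitely many choices is sound and is in fact the same mechanism the paper uses later (Lemma~\ref{lem:corner}) for analogous small-rank corners.
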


\begin{proof}
(1)
By (Eq. \ref{eq:universal}),
it remains to establish the base case of the induction:
we need to show
$u_{n}-\dim \cX^{\Ell,\vec h}_{m, n+2m}\ge v_2^2/2$.
By Lemma \ref{lem:dim-ell} and Lemma \ref{lem:aut},
it suffices to show
$$
u_{n} + 1 + \dim
\begin{cases}
\Sp_{v_2} & \text{symplectic} \\
\SO_{v_2} & \text{orthogonal}
\end{cases}
\ge \frac{1}{2}v_2^2
$$
By Lemma \ref{lem:val-u},
we have
$u_{n}\ge v_2^2-v_2$ (since $n\ge 2v_2$).
So it suffices to check
$v_2^2-v_2+1\ge v_2^2/2$, which is clear.

The extra enhancement term $\min_x\delta_x$
is obtained by using the original
Corollary \ref{cor:universal}
in the last step of induction.

(2)
It is the same as part (1)
except that we don't get the extra enhancement term.

(3)
Using Corollary \ref{cor:cone-m2}
for this corner case,
we get
$\delta^{\vec h}_{1, n+2}\ge \min(3, 1+n-2)$.
So we are done if $n\ge 4$.
But since $v_2=2$, $n\ge4$ for $\delta^{\vec h}_{1, n+2}$
to be even defined.
\end{proof}

\begin{cor}
\label{cor:extra-m}
We have $\delta^{\vec h}_{m, n+2m}> \frac{1}{2}v_2^2$
when $m>1$.
\end{cor}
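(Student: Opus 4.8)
The plan is to re-run the induction behind Theorem \ref{thm:universal}, this time retaining the contribution $\min(\delta_x,\frac{m(m-1)}{2}v_2)$ of Proposition \ref{prop:deltap} that is discarded on the way to (Eq. \ref{eq:universal}); the hypothesis $m>1$ will be used only through the inequality $\frac{m(m-1)}{2}\ge 1$. By the reduction of Lemma \ref{lem:degen} --- sharpened as explained at the end --- it suffices to establish $\delta^{\vec h,\circ}_{m,n+2m}>\frac{1}{2}v_2^2$, and we may assume $v_2\ge 1$ (the case $v_2=0$, where the bound reads $\delta^{\vec h}_{m,n+2m}>0$, being handled by the same d\'evissage at the bottom of the induction).

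Assume then $v_2\ge 1$, and write
$$
(\star)\ :=\ \delta_x+\frac{v_2(v_2-1)}{2}-\dim H^2(\Gal_F,Z_{v_2m,n})+\frac{m(m-1)}{2}v_2 .
$$
Corollary \ref{cor:universal} together with Proposition \ref{prop:deltap} gives
$$
\delta^{\vec h,\circ}_{m,n+2m}-\frac{1}{2}v_2^2\ \ge\ \min_x\Big[\big(\delta^{\vec h'}_{m,n-2v_2m+2m}-\frac{1}{2}v_2^{\prime2}\big)+(\star)\Big] .
$$
Since $n-2v_2m<n$, Theorem \ref{thm:universal} applied at this smaller index makes the first summand nonnegative, so it remains to check $(\star)\ge 1$ when $m\ge 2$ and $v_2\ge 1$. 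This is a short case analysis. If $H^2(\Gal_F,Z_{v_2m,n})=0$, then $\delta_x=0$ and $(\star)\ge\frac{m(m-1)}{2}v_2\ge v_2\ge 1$. If $H^2(\Gal_F,Z_{v_2m,n})\ne 0$, then for $G$ orthogonal $\dim H^2=\frac{v_2(v_2-1)}{2}$, whence $(\star)=\delta_x+\frac{m(m-1)}{2}v_2\ge v_2\ge 1$; and for $G$ symplectic or unitary $\dim H^2=\frac{v_2(v_2+1)}{2}$ and $\delta_x\ge v_2$ by Corollary \ref{cor:cone}(2), so $(\star)=\delta_x-v_2+\frac{m(m-1)}{2}v_2\ge\delta_x\ge v_2\ge 1$. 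Hence $\delta^{\vec h,\circ}_{m,n+2m}\ge\frac{1}{2}v_2^2+1$.

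It remains to transfer this back from $\delta^{\vec h,\circ}_{m,n+2m}$ to $\delta^{\vec h}_{m,n+2m}$, which is precisely the d\'evissage of Lemma \ref{lem:degen} run with the bound $\frac{1}{2}v_2^2+1$ in place of $\frac{1}{2}v_2^2$: by Lemma \ref{lem:sad-nsad}(2) the complement of $\cX^{\vec h,\circ}_{\lsup LM_{m,n+2m}}$ consists of parameters with a nontrivial anisotropic summand $\alpha(-1)^{\oplus v_2^a}$ split off, and these are governed by a strictly smaller instance of the statement, so the extra $+1$ propagates. I expect this final step --- verifying that the strict gain survives the d\'evissage, and that the base of the secondary induction presents no difficulty (there $\cX^{\vec h,\circ}_{\lsup LM_{m,n+2m}}=\cX^{\vec h}_{\lsup LM_{m,n+2m}}$) --- to be the only genuinely delicate point; the rest is a direct tightening of estimates already in place, enabled by the single observation $\frac{m(m-1)}{2}\ge 1$ for $m\ge 2$.
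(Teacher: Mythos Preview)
Your approach is essentially the paper's: re-run the induction behind Theorem~\ref{thm:universal}, but keep the term from Proposition~\ref{prop:deltap} that was dropped in passing to (Eq.~\ref{eq:universal}), and show it is strictly positive when $m>1$. The paper compresses this to two sentences; your version spells out the case analysis.

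There is, however, one genuine gap in your argument. In the symplectic/unitary case with $\dim H^2(\Gal_F,Z_{v_2m,n})>0$ you write ``$\delta_x\ge v_2$ by Corollary~\ref{cor:cone}(2)''. But the cone theorem and its corollaries are set up (see the beginning of the affine cone sections) for a \emph{character} $\alpha:\Gal_K\to\bFp^\times$, with the $\GL$-block representation equal to $\alpha^{\oplus m}$. In the universal induction with $m>1$, the $\GL_{v_2m}$-block carries $\alpha(-1)^{\oplus v_2}$ for an irreducible $m$-dimensional $\alpha$, which is not of that shape; indeed the proof of Proposition~\ref{prop:deltap} itself only invokes Corollary~\ref{cor:cone} when $m=1$, and for $m>1$ relies instead on $\frac{m(m-1)}{2}\ge 1$. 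So your citation is not justified. The paper closes this case differently and more simply: if $\delta_x=0$ then $X=X^c$, which contradicts the non-degeneracy of the cup product pairing coming from local Tate duality. Since $\delta_x$ is an integer, $\delta_x>0$ gives $\delta_x\ge 1$, and your chain $(\star)\ge\delta_x\ge 1$ then goes through unchanged. So the fix is immediate: replace the appeal to Corollary~\ref{cor:cone}(2) by the local Tate duality observation.

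Your discussion of the transfer from $\delta^{\vec h,\circ}$ to $\delta^{\vec h}$ via Lemma~\ref{lem:degen} is more explicit than the paper's; the paper implicitly relies on the same d\'evissage. You are right that this is the place requiring care, and for $m>1$ the inequality in the proof of Lemma~\ref{lem:degen} is in fact strict when $v_2^a>0$, so the extra $+1$ does propagate. Both proofs are silent on the case $v_2=0$.
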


\begin{proof}
By Proposition \ref{prop:deltap},
it suffices to show
$\delta_x>0$
when $\dim H^2(\Gal_F, Z_{v_2m, n})>0$.
If $\delta_x=0$,
then $X=X^c$ which is impossible
by local Tate duality.
\end{proof}

\begin{cor}
\label{cor:main}
$\dim \cX_{\lsup LG_n} = u_{n}$.
\end{cor}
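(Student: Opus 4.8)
The plan is to deduce $\dim \cX_{\lsup LG_n} = u_n$ from the universal induction inequality of Theorem \ref{thm:universal} together with the degeneration/approximation result of Lemma \ref{lem:degen}. First I would observe that $\cX_{\lsup LG_n}$ is stratified by the locally closed substacks $\cX_{\lsup LM_{m,n+2m}}^{\vec h}$ after passing through the maximal parabolics: every Langlands parameter $\rho:\Gal_F\to\lsup LG_n$ either is elliptic (handled directly by Lemma \ref{lem:dim-ell}), or factors through some maximal proper parabolic $\lsup LP_m$, in which case it is classified by a pair $(\alpha,\tau)$ with $\alpha:\Gal_K\to\GL_m$ irreducible and $\tau\in\cX_{\lsup LM_{m',\dots}}$ of lower rank, plus a class in $H^1(\Gal_F,U_{m,n})$. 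So $\cX_{\lsup LG_n}$ is covered, up to taking scheme-theoretic images under $L$-embeddings (which does not increase dimension, by Corollary \ref{cor:relative-dim}), by the loci $\cX^{\vec h,\circ}_{\lsup LM_{m,n+2m}}$ and the elliptic locus.

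The heart of the argument is the lower bound $\delta^{\vec h}_{m,n+2m}\ge \tfrac12 v_2^2$ from Theorem \ref{thm:universal}(2) (and the orthogonal analogue in (1)), where $\delta^{\vec h}_{m,n+2m}=u_n-\dim\cX^{\vec h}_{m,n+2m}$. For the top-dimensional stratum of $\cX_{\lsup LG_n}$ itself we take $m=1$ in an ambient sense — more precisely, $\cX_{\lsup LG_n}$ sits at the ``$m=0$'' end, so that $\delta$ for $\cX_{\lsup LG_n}$ is literally $u_n - \dim\cX_{\lsup LG_n}$, and Theorem \ref{thm:universal} applied with the relevant parabolic data gives $u_n-\dim\cX_{\lsup LG_n}\ge 0$, i.e. $\dim\cX_{\lsup LG_n}\le u_n$. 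The reverse inequality $\dim\cX_{\lsup LG_n}\ge u_n$ should come from exhibiting an explicit family of parameters of the expected dimension — the generic parameter factoring through a Borel, whose versal deformation space has dimension exactly $[F:\bQ_p]\dim\bG/\brB = u_n$ by the Euler-characteristic computation of $H^1(\Gal_F,\Lie U)$ against $H^0$ and $H^2$; since for a sufficiently generic (e.g. regular) Borel parameter $H^2$ vanishes and the automorphism group is minimal, the corresponding substack of $\cX_{\lsup LG_n}$ already has dimension $u_n$. Combining the two bounds yields equality.

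I expect the main obstacle to be bookkeeping rather than a new idea: one must run the induction of Proposition \ref{prop:universal}/Corollary \ref{cor:universal} correctly down to the base case, and the base case is the elliptic locus, where Lemma \ref{lem:dim-ell}, Lemma \ref{lem:aut} and the value of $u_n$ from Definition \ref{lem:val-u} must be juggled to verify $u_n+1+\dim(\Sp_{v_2}\text{ or }\SO_{v_2})\ge \tfrac12 v_2^2$, using $u_n\ge v_2^2-v_2$. There is also the subtlety that Theorem \ref{thm:universal} only directly controls $\delta^{\vec h,\circ}$ on the dense open $\cX^{\vec h,\circ}$; here one invokes Lemma \ref{lem:degen} (proved later) which upgrades $P^\circ(n)$ to $P(n)$ assuming $P(<n)$, so the induction is internally consistent. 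Finally, one should check that the non-self-adjoint strata contribute nothing in the relevant range by Lemma \ref{lem:nsad}, and that Corollary \ref{cor:extra-m} ($\delta>\tfrac12 v_2^2$ for $m>1$) guarantees the only strata achieving dimension $u_n$ are the expected ones — which is exactly what is needed for the finer statements (Theorem \ref{thm:main2}), though for Corollary \ref{cor:main} itself the inequality $\delta\ge 0$ plus the explicit Borel family suffices.
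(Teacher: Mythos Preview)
Your upper-bound argument is muddled in a way that obscures the one clean step you need. You write that $\cX_{\lsup LG_n}$ is ``stratified by the locally closed substacks $\cX_{\lsup LM_{m,n+2m}}^{\vec h}$'', but those substacks live in $\cX_{\lsup LM_{m,n+2m}}$, not in $\cX_{\lsup LG_n}$; and then you switch to an \emph{internal} parabolic decomposition of $\cX_{\lsup LG_n}$ (elliptic locus plus images of $\cX_{\lsup LP_{m,n}}$), which is a different object with Levi $\GL_m\times G_{n-2m}$, not $\GL_m\times G_n$. The ``$m=0$ end'' remark has no meaning in the paper's setup. What the paper actually does is a one-line trick: since $\lsup LM_{m,n+2m}=\lsup L\Res_{K/F}\GL_m\times\lsup LG_n$ and the elliptic locus $\cX_{\lsup L\Res_{K/F}\GL_m}^{\Ell}$ is $0$-dimensional, the product embedding
\[
\cX_{\lsup L\Res_{K/F}\GL_m}^{\Ell}\times\cX_{\lsup LG_n}\hookrightarrow\cX_{\lsup LM_{m,n+2m}}'=\bigcup_{\vec h}\cX_{\lsup LM_{m,n+2m}}^{\vec h}
\]
gives $\dim\cX_{\lsup LG_n}\le\max_{\vec h}\dim\cX_{\lsup LM_{m,n+2m}}^{\vec h}\le u_n$ directly from Theorem~\ref{thm:universal}. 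No internal stratification, no induction on $n$ inside the corollary, no appeal to Lemma~\ref{lem:degen} or Lemma~\ref{lem:nsad} is needed at this step --- those were already consumed in proving Theorem~\ref{thm:universal}.

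Your lower bound (exhibit a generic Borel family of the right dimension) is a genuine alternative to the paper's argument, which instead cites \cite{BG19}: the de Rham lifting ring of any regular Hodge type is $p$-flat of dimension $u_n+1+\dim\bG_n$, so its special fibre modulo $\bG_n$-action gives a substack of $\cX_{\lsup LG_n}$ of dimension $u_n$. Your Borel approach works too, but to make it rigorous you must check that for generic torus data the map $\cX_{\lsup LB_n}\to\cX_{\lsup LG_n}$ has $0$-dimensional fibres (equivalently, that $\Aut_{\bG_n}(\rho)=\Aut_{\brB_n}(\rho)$ generically), which is true but is exactly the sort of automorphism bookkeeping the citation to \cite{BG19} lets one avoid.
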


\begin{proof}
By \cite{BG19},
for each regular Hodge type $\lambda$,
the de Rham lifting ring $\spec R$
of Hodge type $\lambda$ and any inertial type
is $p$-flat of
dimension $u_{n}+1+\dim \bG_n$.
So, $\spec R \otimes \bFp$
has dimension $u_{n}+\dim \breve {G}_{n}$.
Since $[\spec R \otimes \bFp/\breve {G}_{n}]_{\red}
\subset \cX_{\lsup LG_{n}}$,
we have
$\dim \cX_{\lsup LG_{n}} \ge u_{n}$.

Conversely,
consider the diagram
$$
\xymatrix{
& \cX_{\lsup L\Res_{K/F}\GL_m}^{\Ell}\times \cX_{\lsup LG_n} \ar@{^{(}->}[d] \ar@{-->}[ld] \\
\bigcup_{\vec h}\cX_{\lsup LM_{m, n+2m}}^{\vec h}
\ar[r]
&
\cX_{\lsup LM_{m, n+2m}}
}
$$
where $\cX_{\lsup L\Res_{K/F}\GL_m}^{\Ell}$
is the locus of irreducible Galois representations
and is of dimension $0$.
Note that $\cX_{\lsup LM_{m, n+2m}}^{\vec h}$
is empty except for finitely many $\vec h$.
It is clear that the vertical embedding
factors through the union.
So $\dim \cX_{\lsup LG_n}\le
 \max_{\vec h}\dim \cX_{\lsup LM_{m, n+2m}}^{\vec h}
\le u_n$.
\end{proof}

\subsection{The proof of Lemma \ref{lem:degen}}
\label{subsec:finish}
We still need to have a proof for Lemma \ref{lem:degen}.

\begin{proof}
By part (2) of Lemma \ref{lem:sad-nsad},
write $v_2=v_2^a+v_2^i$.
It suffices to show
$$
\frac{1}{2}v_2^2
\le
\frac{1}{2}(v_2^i)^2
+
\frac{1}{2}(v_2^a)^2
+ u_n - u_{n-v_2^am} - u_{v_2^am}
$$
which is clear using $n\ge 2v_2^i+v_2^a$.
\end{proof}

\section{Top-dimensional irreducible components}

Denote by $\lsup LB_n= \brB_n\rtimes \Gal(K/F)$
a Borel of $\lsup LG_n$
and let $\lsup LT_n=\bT_n\rtimes \Gal(K/F)\subset \lsup LB_n$
be a maximal torus.

\begin{lem}
\label{lem:image-P1}
The scheme-theoretic image of
the natural map
$f:\cX_{\lsup LP_{1,n}} \to \cX_{\lsup LG_n}$
contains all top-dimensional components of
 $\cX_{\lsup LG_n}$.
\end{lem}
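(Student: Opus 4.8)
The plan is to bound the complement of the (scheme-theoretic) image of $f$ inside $\cX_{\lsup LG_n}$ and then use irreducibility of the top-dimensional components. By \cite{L23B} both stacks are of finite type over $\bFp$, so the set-theoretic image of $f$ is constructible (\cite{Stacks}), and since the target is reduced its closure is the scheme-theoretic image. Let $\cZ\subseteq\cX_{\lsup LG_n}$ be the (constructible) complement: its $\bFp$-points are the parameters $\rho$ not factoring through any $\Gal_F$-conjugate of $\lsup LP_{1,n}$, i.e.\ those whose standard representation carries no $\Gal_F$-stable isotropic line (in the unitary case, no stable isotropic $K$-line, read through non-abelian Shapiro). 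It suffices to prove $\dim\cZ<u_n=\dim\cX_{\lsup LG_n}$ (Corollary \ref{cor:main}): a top-dimensional component $C$ is irreducible of dimension $u_n$, so $\overline{C\cap\cZ}\subsetneq C$, hence $C$ meets the image of $f$ in a dense open subset and $C$ lies in the scheme-theoretic image.

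To estimate $\dim\cZ$ I would stratify by the structure of $\rho$. If $\rho$ has irreducible standard representation it lies in a $0$-dimensional locus — irreducible $\bFp$-representations of $\Gal_F$ are finite up to unramified twist (wild inertia acts trivially on them in characteristic $p$), and twisting is a $0$-dimensional operation — and $0<u_n$ whenever $\lsup LP_{1,n}$ is proper. Otherwise $\rho$ has a nonzero proper stable subspace; taking radicals $W\cap W^{\perp}$ of stable subspaces one sees that either (i) $\rho$ has a nonzero proper stable \emph{anisotropic} subspace $W$, whence $\rho\cong W\oplus W^{\perp}$ with $W$, $W^{\perp}$ genuine parameters of smaller classical groups of the same similitude character (for symplectic $G$ an anisotropic summand is automatically even-dimensional), or (ii) $\rho$ has no stable line at all, so its socle is a sum of irreducibles of dimension $\ge 2$ and any irreducible submodule $W$, of dimension $k\ge 2$, is forced to be totally isotropic, so that $\rho$ factors through $\lsup LP_{k,n}$ with irreducible $\Res_{K/F}\GL_k$-block. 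Thus $\cZ$ is covered by this irreducible locus, by the images of $\cX_{\lsup LG_j}\times\cX_{\lsup LG_{n-j}}\to\cX_{\lsup LG_n}$ for $0<j<n$, and by the images of the substacks $\cX'_{\lsup LP_{k,n}}\to\cX_{\lsup LG_n}$ (where the $\Res_{K/F}\GL_k$-block is irreducible) for $k\ge 2$.

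The anisotropic images have dimension $\le u_j+u_{n-j}$ by Corollary \ref{cor:relative-dim} and Corollary \ref{cor:main} for smaller rank, and $u_j+u_{n-j}<u_n$ by the strict superadditivity of $u_\bullet$, immediate from Definition \ref{lem:val-u} (the deficit is, up to the harmless rounding in the orthogonal case, a positive multiple of $j(n-j)$; the finitely many small cases are checked by hand). The crux is case (ii): the image of $\cX'_{\lsup LP_{k,n}}$ is, up to the finitely many choices of the irreducible block (which is recovered from $\rho$), exactly the kind of locus dissected by the universal induction of the previous sections. Stratifying by the cohomology data $\vec h$ and applying Lemma \ref{lem:fiber}, Lemma \ref{lem:sad-nsad} and Corollary \ref{cor:extra-m} — whose hypothesis $m>1$ is precisely $k\ge 2$, and for which the multiplicity $v_2\ge 1$ of the block in the isotropic part is positive — bounds each piece by $u_n-\delta^{\vec h}<u_n-\tfrac12 v_2^2\le u_n$; here the affine-cone estimate Corollary \ref{cor:cone} is what forces $\delta^{\vec h}$ to be large enough, while the finitely many unobstructed strata where $H^2(\Gal_F,Z_{k,n})=0$ are handled by a direct Euler-characteristic count giving dimension $\le u_n-[F:\bQ_p]\binom{k}{2}<u_n$. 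Since only finitely many $\vec h$, $k$ and $j$ intervene, $\dim\cZ<u_n$ and the lemma follows.

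The main obstacle is the bound on $\dim$ of the image of $\cX'_{\lsup LP_{k,n}}$ for $k\ge 2$: this is where the affine-cone theorem and the full strength of the universal-induction estimates are indispensable, and where one must additionally reconcile the parabolic-induction dimension count with the Levi-based quantities $\delta^{\vec h}$ and dispose of the degenerate strata. The remaining ingredients — constructibility, the anisotropic splitting, and the irreducible locus — are routine.
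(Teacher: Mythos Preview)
Your argument is correct in spirit and rests on the same key input as the paper, namely Corollary \ref{cor:extra-m}. However, the paper's own proof is dramatically shorter and avoids most of your stratification. The paper simply observes: any $\rho$ with no isotropic line but with \emph{some} nonzero isotropic subrepresentation admits an irreducible isotropic sub $\alpha(-1)$ of dimension $m>1$; the pair $(\alpha,\rho)$ then lies in $\cX^{\vec h}_{\lsup LM_{m,\,n+2m}}$ (the Levi $\GL_m\times G_n$ inside $G_{n+2m}$) with $v_2\ge 1$, and Corollary \ref{cor:extra-m} gives $\dim\cX^{\vec h}_{\lsup LM_{m,\,n+2m}}<u_n$ directly. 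Projecting to the $G_n$-factor and invoking Corollary \ref{cor:relative-dim} finishes the job in one line.

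Compared with this, your route through $\cX'_{\lsup LP_{k,n}}$ is a detour: you end up needing exactly the bound on $\cX^{\vec h}_{\lsup LM_{k,\,n+2k}}$ anyway, and the separate ``direct Euler-characteristic'' treatment of the unobstructed strata is superfluous (and as stated a bit shaky --- the bound $u_n-[F:\bQ_p]\binom{k}{2}$ ignores the $+v_2$ contribution to the fibre dimension, which is only absorbed once you feed it back into the $\delta^{\vec h}$-machinery). On the other hand, your explicit treatment of the elliptic locus via the irreducible case and the anisotropic splitting $\rho\cong W\oplus W^\perp$ is more careful than the paper, whose proof is terse on this point; both approaches ultimately need the elliptic locus to be small, and your superadditivity argument $u_j+u_{n-j}<u_n$ makes this transparent.
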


\begin{proof}
Consider
$$
\bigcup_{m>1} \cX^{\vec h}_{\lsup LM_{m, n+2m}},
$$
which has dimension strictly smaller than
$u_n$ by Corollary \ref{cor:extra-m},
and its image in $\cX_{\lsup LG_n}$
consists of all parameters
that have an isotropic subspace of dimension
greater than $1$
(parameters
in the complement of the image of $\cX_{\lsup LP_{1,n}}$).
\end{proof}

Next, we analyze the structure of $\cX_{\lsup LP_{1,n}}$.
Recall that $U_{1,n}=Z_{1,n}\rtimes V_{1,n}$ is the unipotent radical
of $\lsup LP_{1,n}$.
Denote by
$$
\cX_{\lsup LP_{1,n}}^{(v_2;z_2)},~
\cX_{\lsup LM_{1,n}}^{(v_2;z_2)},~
\cX_{V_{1,n\rtimes}\lsup LM_{1,n}}^{(v_2;z_2)}
$$
the locally closed substacks
of 
$\cX_{\lsup LP_{1,n}},~
\cX_{\lsup LM_{1,n}},~
\cX_{V_{1,n\rtimes}\lsup LM_{1,n}}$ cut out by
the condition
$\dim H^2(\Gal_F, V_{1,n})=v_2$
and
$\dim H^2(\Gal_F, Z_{1,n})=z_2$.

\subsection{The case of orthogonal groups}
$Z_{1,n}$ is indeed trivial; so we always have $z_2=0$.
All fibers of
$
\cX_{\lsup LP_{1,n}}^{(v_2;z_2)}\to
\cX_{\lsup LM_{1,n}}^{(v_2;z_2)}
$
are isomorphic
to
$
[H^1(\Gal_F, V_{1,n})/H^0(\Gal_F, V_{1,n})]
$
and have dimension
$[F:\bQ_p]\dim V_{1,n}+v_2$.
Thus by Theorem
\ref{thm:universal}
\begin{align*}
\dim \cX_{\lsup LP_{1,n}}^{(v_2;0)}
&\le u_n + v_2 - \min_{\vec h=(*, v_2,\dots)}\delta^{\vec h}_{1,n}
\\
&\le u_n + v_2 - v_2^2/2-\min_x \delta_x.
\end{align*}
which is strictly smaller than $u_n$
unless $v_2=0,1,2$.
By local Tate duality, $\delta_x>0$ if $v_2=2$.
So we have

\begin{lem}
If $G$ is orthogonal,
then the scheme-theoretic image
of $\bigcup_{v_2=0,1}\cX_{\lsup LP_{1,n}}^{(v_2;0)}
\to \cX_{\lsup LG_n}$
contains all top-dimensional components of
$\cX_{\lsup LG_n}$.
\end{lem}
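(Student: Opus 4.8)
The plan is to deduce the statement from Lemma~\ref{lem:image-P1} together with the stratum-by-stratum dimension bound assembled just above. By Lemma~\ref{lem:image-P1} and Corollary~\ref{cor:main} it suffices to prove that every part of $\cX_{\lsup LP_{1,n}}$ of dimension $u_n$ is contained in $\bigcup_{v_2=0,1}\cX_{\lsup LP_{1,n}}^{(v_2;0)}$: the scheme-theoretic image of a finite-type stack cannot increase dimension, and the scheme-theoretic image of a finite union of substacks is the union of their images. Because $Z_{1,n}$ is trivial in the orthogonal case we always have $z_2=0$, so $\cX_{\lsup LP_{1,n}}$ is the \emph{finite} union of the locally closed strata $\cX_{\lsup LP_{1,n}}^{(v_2;0)}$ over the finitely many $v_2$ for which the stratum is nonempty.

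I would then record the estimate
\[
\dim\cX_{\lsup LP_{1,n}}^{(v_2;0)}\ \le\ u_n+v_2-\frac{1}{2}v_2^2-\min_x\delta_x,
\]
obtained by combining: the description of $\cX_{\lsup LP_{1,n}}^{(v_2;0)}\to\cX_{\lsup LM_{1,n}}^{(v_2;0)}$ as having all fibers of dimension $[F:\bQ_p]\dim V_{1,n}+v_2$; the identity $u_n-u_{n-2}=[F:\bQ_p]\dim V_{1,n}$ (the displayed formula preceding Proposition~\ref{prop:universal}, using $Z_{1,n}=0$); and Theorem~\ref{thm:universal}(1), which gives $\delta^{\vec h}_{1,n}\ge\frac{1}{2}v_2^2+\min_x\delta_x$. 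For $v_2\ge 3$ the quantity $v_2-\frac{1}{2}v_2^2$ is strictly negative, so those strata have dimension strictly below $u_n$; for $v_2=2$ the quantity $v_2-\frac{1}{2}v_2^2$ vanishes, but then $\delta_x>0$ by local Tate duality (were $\delta_x=0$ one would have $X=X^c$, which is impossible once the obstruction space is nonzero, exactly as in the proof of Corollary~\ref{cor:extra-m}). Hence only the strata with $v_2\in\{0,1\}$ can attain dimension $u_n$.

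Combining the two paragraphs finishes the proof: a top-dimensional component of $\cX_{\lsup LG_n}$ lying in the image of $f$ is dominated by a $u_n$-dimensional subset of $\cX_{\lsup LP_{1,n}}$, which by the above lies in $\bigcup_{v_2=0,1}\cX_{\lsup LP_{1,n}}^{(v_2;0)}$, so that component is contained in the scheme-theoretic image of this union. I do not expect any genuine obstacle here; all the substance sits in Theorem~\ref{thm:universal} (hence in Theorem~\ref{thm:cone} and Corollary~\ref{cor:cone}) and in the local Tate duality nonvanishing $\delta_x>0$ that eliminates the borderline value $v_2=2$. The only points deserving care are the behaviour of dimension under scheme-theoretic images of algebraic stacks and the compatibility of taking such images with finite unions of locally closed substacks.
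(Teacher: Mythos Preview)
Your proposal is correct and follows essentially the same route as the paper: the paper's proof is precisely the displayed dimension estimate $\dim\cX_{\lsup LP_{1,n}}^{(v_2;0)}\le u_n+v_2-v_2^2/2-\min_x\delta_x$ (obtained from the fiber dimension and Theorem~\ref{thm:universal}(1)), the observation that this forces $v_2\in\{0,1,2\}$, and the elimination of $v_2=2$ via local Tate duality ($\delta_x>0$), all combined with Lemma~\ref{lem:image-P1}. You have simply been more explicit about the bookkeeping with scheme-theoretic images and finite unions of strata, which the paper leaves implicit.
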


\begin{thm}
\label{thm:irr-orth}
If $G$ is orthogonal, then

(1)
$\cX_{\lsup LP_{1,n}}^{(0;0)}
\to \cX_{\lsup LM_{1,n}}
$
induces bijection of top-dimensional components.

(2)
$\cX_{\lsup LP_{1,n}}^{(1;0)}
\to \cX_{\lsup LM_{1,n}}
\to \cX_{\lsup LG_{n-2}}
$
induces bijection of top-dimensional components.
\end{thm}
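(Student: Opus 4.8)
The plan is to factor both maps as an ``$H^1$‑bundle'' projection $\pi\colon\cX_{\lsup LP_{1,n}}^{(v_2;0)}\to\cX_{\lsup LM_{1,n}}^{(v_2;0)}$ followed, in part (2), by the forgetful map $q\colon\cX_{\lsup LM_{1,n}}^{(1;0)}\to\cX_{\lsup LG_{n-2}}$, $(\alpha,\tau)\mapsto\tau$, and to run a dimension count on each factor. First I record the common input. Since $G$ is orthogonal, $Z_{1,n}=0$, so $U_{1,n}=V_{1,n}$ is abelian, $\cX_{\lsup LP_{1,n}}=\cX_{V_{1,n}\rtimes\lsup LM_{1,n}}$, and by the discussion preceding the theorem $\pi$ is surjective with every fiber isomorphic to $[H^1(\Gal_F,V_{1,n})/H^0(\Gal_F,V_{1,n})]$, hence geometrically irreducible of constant dimension $[F:\bQ_p]\dim V_{1,n}+v_2$. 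As $\pi$ is the restriction of the smooth morphism $\cX_{\lsup LP_{1,n}}\to\cX_{\lsup LM_{1,n}}$, it therefore induces a bijection of irreducible components preserving relative codimension, in particular a bijection of top‑dimensional components. I also record $\dim\cX_{\lsup LM_{1,n}}=u_{n-2}$ (as $\dim\cX_{\lsup L\Res_{K/F}\GL_1}=0$ and $\dim\cX_{\lsup LG_{n-2}}=u_{n-2}$ by Corollary \ref{cor:main}) and $u_n-u_{n-2}=[F:\bQ_p]\dim V_{1,n}=:d$, the case $m=v_2=1$ of the identity $u_n-u_{n-2v_2m}=[F:\bQ_p](\tfrac{v_2m(v_2m-1)}2+\dim U_{v_2m,n})$ recorded before Proposition \ref{prop:universal}.

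\emph{Part (1).} Take $v_2=0$. By Theorem \ref{thm:universal}(1) every stratum with $v_2>0$ has $\delta_{1,n}^{\vec h}\ge\tfrac12 v_2^2>0$, so $\dim\cX_{\lsup LM_{1,n}}^{(v_2;0)}<u_{n-2}$ there; since $\cX_{\lsup LM_{1,n}}^{(0;0)}$ is open and the maximal stratum is attained at $v_2=0$, the open substack $\cX_{\lsup LM_{1,n}}^{(0;0)}$ meets every top‑dimensional component of $\cX_{\lsup LM_{1,n}}$ in a dense open, and the two stacks have the same top‑dimensional components, all of dimension $u_{n-2}$. Combined with the behaviour of $\pi$ above, $\cX_{\lsup LP_{1,n}}^{(0;0)}\to\cX_{\lsup LM_{1,n}}$ induces a bijection of top‑dimensional components, the ones upstairs having dimension $u_{n-2}+d=u_n$. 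This is (1).

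\emph{Part (2).} Take $v_2=1$; then $\pi$ has fibers of dimension $d+1$. The key computation is that, via the canonical isomorphism $V_{1,n}\cong\alpha\otimes\tau^\vee\cong\alpha\lambda_\tau^{-1}\tau$ of $\Gal_F$‑modules and local Tate duality,
\[
\dim H^2(\Gal_F,V_{1,n})=\dim\Hom_{\Gal_F}(\alpha(-1),\tau),
\]
so $\cX_{\lsup LM_{1,n}}^{(1;0)}$ is exactly the locus where $\alpha(-1)$ occurs with multiplicity one in the socle of $\tau$. Over the dense open locus $\mathcal W^{\circ}\subseteq\cX_{\lsup LG_{n-2}}$ of $\tau$ whose socle is a single character, $q$ is then the trivial $\Gm$‑gerbe onto $\mathcal W^{\circ}$ (the character $\alpha=\psi(1)$ with $\psi=\operatorname{soc}\tau$ is determined by $\tau$, and $\Aut_{\lsup L\Res_{K/F}\GL_1}(\alpha)=\Gm$ is the kernel of the induced map on automorphism groups), while over the complement of $\mathcal W^\circ$ in its image $q$ still has relative dimension $-1$ (finite unions of such gerbes). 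Hence $q$ induces a bijection of top‑dimensional components of $\cX_{\lsup LM_{1,n}}^{(1;0)}$ with top‑dimensional components of its scheme‑theoretic image $\mathcal W\subseteq\cX_{\lsup LG_{n-2}}$, and $\dim\cX_{\lsup LM_{1,n}}^{(1;0)}=\dim\mathcal W-1$. Because $\mathcal W$ contains a dense open of every top‑dimensional component of $\cX_{\lsup LG_{n-2}}$ (see the next paragraph), $\dim\mathcal W=u_{n-2}$, the top‑dimensional components of $\mathcal W$ are exactly those of $\cX_{\lsup LG_{n-2}}$, and $\dim\cX_{\lsup LM_{1,n}}^{(1;0)}=u_{n-2}-1$. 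Composing with $\pi$ (relative dimension $d+1$, then $q$ of relative dimension $-1$, so total relative dimension $d=u_n-u_{n-2}$ over $\cX_{\lsup LG_{n-2}}$) shows that $\cX_{\lsup LP_{1,n}}^{(1;0)}\to\cX_{\lsup LM_{1,n}}\to\cX_{\lsup LG_{n-2}}$ induces a bijection of top‑dimensional components, the ones upstairs having dimension $(u_{n-2}-1)+(d+1)=u_n$. This is (2).

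The main obstacle is the density statement used in part (2): that the generic Galois parameter on a top‑dimensional component of $\cX_{\lsup LG_{n-2}}$ has a character socle, equivalently that every top‑dimensional component is the closure of a locus of maximally non‑split successive extensions of characters. For $\GL_n$ this is contained in \cite{EG23}; here I would deduce it either from Corollary \ref{cor:main3} together with the structure of mod‑$p$ reductions of crystalline representations of regular Hodge type, or bootstrap it from the dimension theory itself, Theorem \ref{thm:universal}(1) already forcing the locus of parameters whose socle is not a single character to have strictly smaller dimension. The remaining bookkeeping — surjectivity and irreducibility of the fibers of the $H^1$‑bundles, the dimension formulas via Lemma \ref{lem:fiber}, and checking that the $\Gm$ of unramified twists contributes exactly the $-1$ to the relative dimension of $q$ — is routine.
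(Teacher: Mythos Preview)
Your argument follows the paper's strategy --- analyze the fibers of the projection and invoke the lemma that surjective maps with irreducible equidimensional fibers induce bijections on top components --- but you carry it out with considerably more care than the paper does. The paper's proof is two lines: it simply asserts that both maps are ``surjective with all fibers isomorphic to $[H^1(\Gal_F,V_{1,n})/H^0(\Gal_F,V_{1,n})]$'' and applies the lemma. Your factorization of (2) into $\pi$ followed by the $\Gm$-gerbe $q$ is more honest: the composite fiber over $\tau$ is really $[H^1/H^0\rtimes\Gm]$ of dimension $d$, not $d+1$, and is irreducible only when there is a \emph{unique} $\alpha$ with $v_2(\alpha,\tau)=1$. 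So the paper's assertion is not literally true pointwise, and you are right to isolate the density of $\mathcal W^{\circ}$ as the real content.

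Two remarks on loose ends. First, the global morphism $\cX_{\lsup LP_{1,n}}\to\cX_{\lsup LM_{1,n}}$ is \emph{not} smooth (the fiber dimension $d+v_2$ jumps with $v_2$); only its restriction to a fixed $v_2$-stratum is, so rephrase that sentence. Second, for your density obstacle, route (a) via Corollary~\ref{cor:main3} is circular as written since that corollary sits downstream of this theorem; the clean fix is to run the whole package (Theorem~\ref{thm:irr-orth}, Proposition~\ref{prop:decomposition}) as an induction on $n$, so that for $G_{n-2}$ you already know the generic $\tau$ on a top component is a maximally non-split tower of characters and hence has a one-dimensional socle. Route (b) as you state it is too vague: Theorem~\ref{thm:universal}(1) bounds $\dim\cX_{\lsup LM_{1,n}}^{\vec h}$, not directly the bad locus inside $\cX_{\lsup LG_{n-2}}$, and making that translation is essentially the same induction again.
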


\begin{proof}
Both $\cX_{\lsup LP_{1,n}}^{(0;0)}
\to \cX_{\lsup LM_{1,n}}
$
and $\cX_{\lsup LP_{1,n}}^{(1;0)}
\to \cX_{\lsup LM_{1,n}}
\to \cX_{\lsup LG_{n-2}}
$
are surjective with all fibers isomorphic to
$
[H^1(\Gal_F, V_{1,n})/H^0(\Gal_F, V_{1,n})]$.
So the theorem follows from the lemma below.
\end{proof}

\begin{lem}
If $h:\cX\to \cY$ is surjective with irreducible fibers of
constant dimension $d$
and $\dim \cX=\dim \cY+d$,
then there is a bijection between the 
corresponding top-dimensional components.
\end{lem}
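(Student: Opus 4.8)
The plan is to deduce the bijection of top-dimensional components from the hypotheses by a standard fibration argument. The setup is: $h:\cX\to\cY$ surjective, all fibers irreducible of constant dimension $d$, and $\dim\cX=\dim\cY+d$. The claim is that sending an irreducible component of $\cX$ to (the closure of) its image in $\cY$ sets up a bijection between the top-dimensional components of $\cX$ and those of $\cY$.

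**First I would** set up the map in one direction. Let $\cZ\subset\cX$ be a top-dimensional irreducible component, so $\dim\cZ=\dim\cX=\dim\cY+d$. Let $\cW$ be the scheme-theoretic image (equivalently, closure of the set-theoretic image, since everything is of finite type) of $\cZ$ in $\cY$; it is irreducible. By Lemma~\ref{lem:fiber}(1) applied to $\cZ\to\cW$, whose fibers are closed substacks of the fibers of $h$ and hence have dimension $\le d$, we get $\dim\cZ\le\dim\cW+d$, so $\dim\cW\ge\dim\cY$, forcing $\dim\cW=\dim\cY$ and hence $\cW$ is a top-dimensional component of $\cY$. For the reverse direction, let $\cW\subset\cY$ be a top-dimensional component and let $\cZ$ be an irreducible component of $h^{-1}(\cW)$ dominating $\cW$ (one exists since $h$ is surjective). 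Because the generic fiber of $\cZ\to\cW$ is a nonempty open in an irreducible fiber of $h$ of dimension $d$, it is irreducible of dimension $d$, so by Lemma~\ref{lem:fiber} $\dim\cZ=\dim\cW+d=\dim\cX$; thus $\cZ$ is a top-dimensional component of $\cX$ mapping onto $\cW$.

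**The part requiring care** is checking that these two assignments are mutually inverse — i.e.\ that exactly one top-dimensional component of $\cX$ lies over each top-dimensional component $\cW$ of $\cY$. Suppose $\cZ_1,\cZ_2$ are top-dimensional components of $\cX$ both with image-closure $\cW$. Each $\cZ_i\to\cW$ is dominant with $\dim\cZ_i=\dim\cW+d$; by irreducibility of the fibers of $h$, the fiber of $\cZ_i$ over a general point $y\in\cW$ is dense in the fiber $h^{-1}(y)$ (it has dimension $\ge d$ by the dimension count and lies inside the $d$-dimensional irreducible $h^{-1}(y)$), hence equals all of $h^{-1}(y)$ set-theoretically. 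Therefore $\cZ_1$ and $\cZ_2$ both contain a dense subset of $h^{-1}(\cW^{\circ})$ for a common dense open $\cW^{\circ}\subset\cW$, so they share a point where $h^{-1}(\cW)$ is locally irreducible of the expected dimension; since distinct irreducible components cannot share a point at which the space has a single branch of full local dimension, $\cZ_1=\cZ_2$. This gives injectivity of $\cZ\mapsto\cW$ on top-dimensional components, and combined with surjectivity from the previous paragraph completes the bijection.

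**I expect the main obstacle** to be purely bookkeeping in the stacky setting: one must ensure that ``dimension of the fiber'', ``irreducible component'', and ``scheme-theoretic image'' behave as in the scheme case, which is exactly what Lemma~\ref{lem:fiber} and \cite{Stacks} (Tags 02JU, 0AFN, 0AFP, 02JB) were invoked for earlier, so no new input is needed. The one genuinely delicate point is the generic-irreducibility of the fiber of $\cZ_i\to\cW$: it uses that a nonempty open subset of an irreducible space is irreducible together with the fact that $\cZ_i$, being a component of full dimension lying over $\cW$, must meet the generic fiber of $h$ in something of dimension exactly $d$, hence dense. Once that is in hand the argument is formal.
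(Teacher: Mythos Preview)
Your argument is essentially the same as the paper's: both proofs establish the forward map $C\mapsto D$ using Lemma~\ref{lem:fiber}, and both prove uniqueness over a given $D$ by showing that any top-dimensional $C$ over $D$ must contain the entire fiber $h^{-1}(y)$ for $y$ in a dense open $D^\circ\subset D$, forcing any two such components to coincide. The paper makes this explicit by defining $D^\circ:=\{x\in D\mid h^{-1}(x)\cap C=h^{-1}(x)\}$ and doing the dimension count directly, whereas you phrase it via the generic fiber, but the content is identical.

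One small slip: in the reverse direction you assert that ``the generic fiber of $\cZ\to\cW$ is a nonempty open in an irreducible fiber of $h$.'' Since $\cZ$ is a \emph{closed} irreducible component of $h^{-1}(\cW)$, its fibers are closed, not open, in the fibers of $h$. The cleanest fix is to bypass this: by Lemma~\ref{lem:fiber}(2) applied to $h^{-1}(\cW)\to\cW$ (all fibers have dimension exactly $d$), one gets $\dim h^{-1}(\cW)\ge\dim\cW+d=\dim\cX$, so $h^{-1}(\cW)$ contains a top-dimensional component of $\cX$, and by the forward direction its image is a top-dimensional component of $\cY$ contained in $\cW$, hence equal to $\cW$. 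Alternatively, one may observe that since the generic fiber $h^{-1}(\eta)$ over the generic point $\eta$ of $\cW$ is irreducible, there is in fact a \emph{unique} component of $h^{-1}(\cW)$ dominating $\cW$, and its generic fiber is all of $h^{-1}(\eta)$; this also repairs the argument and in fact already gives uniqueness.
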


\begin{proof}
Let $C\subset \cX$ be a top-dimensional irreducible component.
The scheme-theoretic image $D$ of $C$ in $\cY$
is clearly irreducible;
and is also top-dimensional by Lemma \ref{lem:fiber}.

Conversely, assume $D$ itself is a top-dimensional
irreducible component
and we want to show $h^{-1}(D)$ contains
a unique top-dimensional component $C$.
Set $D^\circ := \{x\in D| h^{-1}(x)\cap C=
h^{-1}(x)\}$.
By Lemma \ref{lem:fiber},
$h^{-1}(D-D^\circ)\cap C$
has dimension strictly smaller than 
$\dim \cX=\dim C$.
So $h^{-1}(D^\circ) \cap C=h^{-1}(D^\circ)$
must be dense in $C$
and therefore $\dim D^\circ = \dim C - d = \dim D$.
Since $D$ is an irreducible component,
$D^\circ$ is dense in $D$
and thus $\dim (D-D^\circ)<\dim D$,
which implies $\dim h^{-1}(D-D^\circ)<\dim \cX$.
\end{proof}

\subsection{The case of symplectic and unitary groups}

There are two choices for $z_2$: either $z_2=0$ or $z_2=1$.
If $z_2=0$,
then 
all fibers of
$
\cX_{\lsup LP_{1,n}}^{(v_2;0)}\to
\cX_{\lsup LM_{1,n}}^{(v_2;0)}
$
are isomorphic to
$$
[H^1(\Gal_F, \Lie U_{1,n})/H^0(\Gal_F, \Lie U_{1,n})]
$$
and have dimension equal to
exactly $[F:\bQ_p]\dim U_{1,n} + v_2$.

\begin{lem}
\label{lem:irr-unitary}
All fibers of
$
\cX_{\lsup LP_{1,n}}^{(v_2;1)}\to
\cX_{\lsup LM_{1,n}}^{(v_2;1)}
$
are isomorphic to
a hypersurface in
$$
[H^1(\Gal_F, \Lie U_{1,n})/H^0(\Gal_F, \Lie U_{1,n})]
$$
and also have dimension equal to
exactly $[F:\bQ_p]\dim U_{1,2} + v_2$.

If 
$(n-2)[F:\bQ_p]+v_2>2$,
then
all fibers of
$
\cX_{\lsup LP_{1,n}}^{(v_2;1)}\to
\cX_{\lsup LM_{1,n}}^{(v_2;1)}
$
are irreducible.
\end{lem}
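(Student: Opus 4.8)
The plan is to compute the fiber of $f:\cX_{\lsup LP_{1,n}}^{(v_2;1)}\to\cX_{\lsup LM_{1,n}}^{(v_2;1)}$ over a point $\tau$ by exactly the bookkeeping used for the $z_2=0$ case just above (and in the universal induction setup), the only change being that now $\dim H^2(\Gal_F,Z_{1,n})=z_2=1$. First I would record that the adjoint $\tau$-action on $\Lie U_{1,n}$ factors through the block-diagonal Levi, hence preserves the grading without mixing the two off-diagonal blocks, so $\Lie U_{1,n}\cong Z_{1,n}\oplus V_{1,n}$ as $\Gal_F$-modules and $H^i(\Gal_F,\Lie U_{1,n})\cong H^i(\Gal_F,Z_{1,n})\oplus H^i(\Gal_F,V_{1,n})$ for every $i$. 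As in the universal induction setup, $f^{-1}(\tau)$ is isomorphic to $[H^1(\Gal_F,U_{1,n})/H^0(\Gal_F,\Lie U_{1,n})]$, the image of $H^1(\Gal_F,U_{1,n})$ in $H^1(\Gal_F,V_{1,n})$ is the affine cone $X^c=\{c\mid c\cup\j c=0\}$ cut out by the cup-product obstruction valued in $H^2(\Gal_F,Z_{1,n})$, and, trivializing the $Z_{1,n}$-part via the splitting above, $H^1(\Gal_F,U_{1,n})$ is identified with the preimage $\mathcal{C}$ of $X^c$ under the projection $H^1(\Gal_F,\Lie U_{1,n})\twoheadrightarrow H^1(\Gal_F,V_{1,n})$.

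The next step is that $X^c$, hence $\mathcal{C}$, is an honest hypersurface. Since $z_2=1$, the obstruction $c\mapsto c\cup\j c$ is a single quadratic form $q$ on $X:=H^1(\Gal_F,V_{1,n})$; for $m=1$ the isomorphism $\Tr$ identifies $X$ with $H^1(\Gal_K,\wt V_{I,1})$, and under this identification $q$ is the quadratic form attached to the bilinear form $(c_1,c_2)\mapsto c_1\cdot c_2$ of Lemma~\ref{lem:inner}, which — $G$ being symplectic or unitary — is non-degenerate and symmetric. Hence $q\neq 0$ of full rank $h:=\dim X$, so $X^c=\{q=0\}$ is a hypersurface in $X$, $\mathcal{C}=X^c\times H^1(\Gal_F,Z_{1,n})$ is a hypersurface in $H^1(\Gal_F,\Lie U_{1,n})$, and $f^{-1}(\tau)$ is a hypersurface in $[H^1(\Gal_F,\Lie U_{1,n})/H^0(\Gal_F,\Lie U_{1,n})]$. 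For the dimension, $\dim f^{-1}(\tau)=\dim\mathcal{C}-\dim H^0(\Gal_F,\Lie U_{1,n})=[\dim H^1(\Gal_F,V_{1,n})-\dim H^0(\Gal_F,V_{1,n})]+[\dim H^1(\Gal_F,Z_{1,n})-\dim H^0(\Gal_F,Z_{1,n})]-1$, and the local Euler characteristic formula together with $\dim H^2(\Gal_F,V_{1,n})=v_2$ and $\dim H^2(\Gal_F,Z_{1,n})=1$ rewrites this as $[F:\bQ_p]\dim U_{1,n}+v_2+1-1=[F:\bQ_p]\dim U_{1,n}+v_2$; this is one less than $\dim[H^1(\Gal_F,\Lie U_{1,n})/H^0(\Gal_F,\Lie U_{1,n})]=[F:\bQ_p]\dim U_{1,n}+v_2+z_2$, exactly as for a hypersurface. (I read the ``$U_{1,2}$'' in the statement as a misprint for $U_{1,n}$.)

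For irreducibility, observe that $q$ is non-degenerate of rank $h$ on $X\cong\bA^h$ and involves none of the $H^1(\Gal_F,Z_{1,n})$-coordinates, so $\mathcal{C}=\{q=0\}$ is the product of an affine space with the affine cone over the smooth quadric hypersurface $\{q=0\}\subset\mathbb{P}^{h-1}$; since $q$ is non-degenerate this cone is geometrically irreducible once $h\ge 3$ (the projective quadric is then irreducible of positive dimension and spans its ambient projective space). Then $f^{-1}(\tau)=[\mathcal{C}/H^0(\Gal_F,\Lie U_{1,n})]$ is the image of the irreducible $\mathcal{C}$ under a surjective morphism, hence irreducible. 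It remains to feed in $h\ge 3$: by the local Euler characteristic and $\dim V_{1,n}=n-2$, one has $h=\dim H^0(\Gal_F,V_{1,n})+v_2+[F:\bQ_p](n-2)\ge(n-2)[F:\bQ_p]+v_2$, so the hypothesis $(n-2)[F:\bQ_p]+v_2>2$ forces $h\ge 3$ and we are done.

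The step I expect to demand the most care is the first one: making precise, as in the universal induction setup, that $f^{-1}(\tau)$ is genuinely cut out with reduced structure and without spurious components by the single equation $c\cup\j c=0$ inside $[H^1(\Gal_F,\Lie U_{1,n})/H^0(\Gal_F,\Lie U_{1,n})]$ — equivalently, that the $H^1(\Gal_F,Z_{1,n})$-torsor over $X^c$ splits, which is exactly where the $\Gal_F$-equivariant decomposition $\Lie U_{1,n}=Z_{1,n}\oplus V_{1,n}$ enters. After that the remaining ingredients — non-degeneracy of $q$ from Lemma~\ref{lem:inner}, the dimension arithmetic, and the classical fact (in the spirit of Lemma~\ref{lem:mixed-term}) that a non-degenerate affine quadric cone in at least three variables is irreducible — are routine.
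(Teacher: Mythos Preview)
Your proposal is correct and follows essentially the same approach as the paper: identify the fiber as the zero locus of the single non-degenerate quadratic form $c\mapsto c\cup\j c$ on $H^1(\Gal_F,V_{1,n})$ (non-degeneracy coming from Lemma~\ref{lem:inner}), multiplied by the affine factor $H^1(\Gal_F,Z_{1,n})$; then invoke irreducibility of a non-degenerate quadric in $\ge 3$ variables (the paper cites Lemma~\ref{lem:mixed-term} for this, exactly as you do ``in the spirit of''); and finally use the Euler-characteristic bound $h\ge(n-2)[F:\bQ_p]+v_2$ to force $h\ge 3$. Your write-up is in fact more explicit than the paper's, which simply records the non-canonical splitting $F\cong\bar F\times H^1(\Gal_F,Z_{1,n})$ without spelling out, as you do, that it comes from the $\Gal_F$-equivariant decomposition $\Lie U_{1,n}=Z_{1,n}\oplus V_{1,n}$ induced by the Levi grading; and your reading of ``$U_{1,2}$'' as a misprint for ``$U_{1,n}$'' is correct.
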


\begin{proof}
It suffices to show the coarse moduli space
$|F|$
of each fiber $F$ is representable by 
an irreducible hypersurface in 
$H^1(\Gal_F, \Lie U_{1,n})$.
We can write $F\cong \bar F \times H^1(\Gal_F, Z_{1,n})$
(non-canonically)
where $\bar F \subset H^1(\Gal_F, V_{1,n})$
is the vanishing locus
of a non-degenerate symmetric bilinear form.
By Lemma \ref{lem:mixed-term},
any non-degenerate symmetric bilinear form
in more than $2$ variables
defines an irreducible hypersurface.
So, the only remaining corner case
is where $\dim H^1(\Gal_F, V_{1,n})\le 2$;
note that $\dim H^1(\Gal_F, V_{1,n}) \ge (n-2)[F:\bQ_p]+v_2$
(since $n-2=\dim V_{1,n}$ and $v_2=\dim H^2(\Gal_F, V_{1,n})$).
\end{proof}

We give an exhaustive list of
the remaining corner cases:

\begin{enumerate}
\item $[F:\bQ_p]\le 2$, $z_2=1$, $v_2=0$, $n=3$, unitary,
\item $F=\bQ_p$, $z_2=1$, $v_2=1$, $n=3$, unitary,
\item $F=\bQ_p$, $z_2=1$, $v_2=0$, $n=4$, symplectic, and
\item $F=\bQ_p$, $z_2=1$, $v_2=0$, $n=4$, unitary.
\end{enumerate}

\begin{lem}
\label{lem:corner}
In each of the cases (1)-(4) listed
above,
$\dim \cX_{\lsup LP_{1,n}}^{(v_2; z_2)}<u_n$.
\end{lem}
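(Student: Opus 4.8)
I would handle the four cases through the fibration $\cX_{\lsup LP_{1,n}}^{(v_2;z_2)}\to\cX_{\lsup LM_{1,n}}^{(v_2;z_2)}$, estimating the base dimension and the relative dimension separately and comparing with the identity $u_n=u_{n-2}+[F:\bQ_p]\dim U_{1,n}$ recorded above Proposition \ref{prop:universal} (the case $v_2m=1$, $m=1$). The general estimate $\dim\cX_{\lsup LP_{1,n}}^{(v_2;z_2)}\le u_n-\tfrac12 v_2^2+v_2$ coming from Theorem \ref{thm:universal}(2) is strict exactly when $v_2\ge3$, and Theorem \ref{thm:universal}(3) handles $v_2=2$; so the four listed cases are precisely the residual $v_2\in\{0,1\}$ strata with $z_2=1$ that are too small for Lemma \ref{lem:irr-unitary}'s irreducibility criterion $(n-2)[F:\bQ_p]+v_2>2$ to apply, and they must be defeated by hand.

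\textbf{The relative dimension.} In all four cases $z_2=1$, so $H^2(\Gal_F,Z_{1,n})$ is one-dimensional; by Corollary \ref{cor:cone}(2) with $m=1$ we have $\delta_x\ge1$, hence the cup-product obstruction is a single non-zero quadratic equation and cuts out an honest (possibly reducible) hypersurface in $[H^1(\Gal_F,\Lie U_{1,n})/H^0(\Gal_F,\Lie U_{1,n})]$. Exactly as in the proof of Lemma \ref{lem:irr-unitary}, the fibers therefore all have dimension $\dim H^2(\Gal_F,\Lie U_{1,n})+[F:\bQ_p]\dim U_{1,n}-1=[F:\bQ_p]\dim U_{1,n}+v_2$.

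\textbf{The base dimension, and cases (1), (3), (4).} Since $\cX_{\lsup LM_{1,n}}=\cX_{\lsup L\Res_{K/F}\GL_1}\times\cX_{\lsup LG_{n-2}}$ and the second factor has dimension $u_{n-2}$ by Corollary \ref{cor:main}, the only way to win is to show that the condition $z_2=1$ forces the first factor below dimension $0$. Unwinding $H^2(\Gal_F,Z_{1,n})\ne0$ gives $\alpha(-1)\cong\theta(\alpha(-1))$, i.e.\ a conjugate--self-duality relation of the form $\alpha_K\cdot(\alpha_K\circ c)=\chi_{\cyc}|_{\Gal_K}$ where $c$ generates $\Gal(K/F)$. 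Because $p>2$ and $[K:F]=2$, the set of solutions $\alpha_K$ is finite: it is a coset of the group of anti--self-dual characters $\ker(1+c)$ on $\widehat{\Gal_K}$, which is finite since $c$ acts trivially on the unramified and tame parts modulo a finite $2$-group. As every point of $\cX_{\lsup L\Res_{K/F}\GL_1}$ has automorphism group $\Gm$, this finite locus has dimension $\le-1$, so $\dim\cX_{\lsup LM_{1,n}}^{(v_2;1)}\le u_{n-2}-1$. Combining with the relative dimension, $\dim\cX_{\lsup LP_{1,n}}^{(v_2;z_2)}\le(u_{n-2}-1)+([F:\bQ_p]\dim U_{1,n}+v_2)=u_n-1+v_2$, which is $<u_n$ whenever $v_2=0$; this disposes of cases (1), (3) and (4).

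\textbf{Case (2) and the main obstacle.} For $v_2=1$ the bound above only gives $\le u_n$, so one more unit of slack is needed, and this is the delicate point. The extra rigidity comes from $v_2=1$: unwinding $H^2(\Gal_F,V_{1,n})\ne0$ pins $\tau_K$ to $\alpha_K(-1)$ (up to the cyclotomic twist), and feeding this into the identity $\tau_K\circ c=\tau_K^{-1}$ — valid because $\tau$ is a parameter for the rank-one unitary group $U_1$ — together with the norm equation above forces $\chi_{\cyc}|_{\Gal_K}$ to be trivial, i.e.\ $F=\bQ_p$, $p=3$, $K=\bQ_3(\mu_3)$. Outside this sub-case the stratum is empty; inside it, $\tau$ is now also confined to a finite set, all three diagonal blocks of $\rho|_{\Gal_K}$ coincide, and $\rho|_{\Gal_K}=\alpha_K\otimes\nu$ for a unipotent representation $\nu$ of rank $3$. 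One then bounds the space of such $\rho$ directly: the data of $\nu$ is a cup-product-constrained pair of classes in $H^1(\Gal_K,\bFp)$ together with a torsor under $H^1(\Gal_K,\bFp)$, and after dividing by the unipotent automorphisms and the surviving $\Gm$ and by keeping track of $\Delta$-invariance one gets dimension $<3=u_3$. The main obstacle is precisely this endgame: every estimate that does not exploit the forced triviality of $\chi_{\cyc}|_{\Gal_K}$ lands exactly on $u_n$, so one is obliged to use that rigidity, and coordinating the automorphism count with the $\Gal(K/F)$-descent is the part that needs care.
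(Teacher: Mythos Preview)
Your approach for cases (3) and (4) is identical to the paper's: both use the $z_2=1$ relation (which pins $\alpha$ to finitely many values over each $\tau$ via the similitude character) to drop the base dimension by $1$.

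For cases (1) and (2) ($n=3$, unitary) the arguments genuinely diverge. The paper's key observation is that the middle block $\beta$ is an $L$-parameter into the anisotropic torus $\lsup LU_1$, and such parameters form a \emph{finite} set over $\bFp$ (the norm-one torus has compact $F$-points, so mod-$p$ characters factor through a finite quotient). This kills one dimension uniformly in $v_2$, and the paper runs both $v_2=0,1$ at once with a single estimate. You never invoke this fact; instead you squeeze the $\alpha$-factor via the norm equation coming from $z_2=1$, which is a different (and independent) source of finiteness. For $v_2=0$ your route and the paper's each lose one dimension and both succeed. For $v_2=1$ the paper claims its single observation still suffices, while you need the further rigidity $\chi_{\cyc}|_K=1$ (forcing $p=3$) and then a direct count.

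Your case (2) endgame is the weak point. You correctly derive that the stratum is empty unless $p=3$, $K=\bQ_3(\mu_3)$, and that all three diagonal characters coincide; but the final sentence (``after dividing by the unipotent automorphisms and the surviving $\Gm$ \dots\ one gets dimension $<3$'') is not an argument. The naive base-times-fiber bound gives exactly $-1+4=3=u_3$, not $<u_3$: both $\alpha$ and $\tau$ are confined to finite sets, but the automorphism group of a point in $\cX_{\lsup LM_{1,3}}$ is only $\Gm\times\mu_2$ (one-dimensional, not three), so the base sits at $-1$, and the hypersurface fiber is honestly $4$-dimensional. To get strict inequality you must actually exploit the extra degeneracy you have identified --- e.g.\ that the additional constraint $z_2=1$ selects a proper closed condition on $\tau(c)$ within the already-finite locus, or run the unipotent count you allude to --- rather than just assert it. The paper's route avoids this by using the $U_1$-finiteness as a second independent constraint (though its own automorphism bookkeeping, writing $\Aut_{\Gm\times\Gm\times\Gm}$, is terse to the point of being hard to verify). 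Either way, case (2) is where the real work hides, and your write-up does not yet discharge it.
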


\begin{proof}
First consider $n=3$ and $G$ unitary.
$\cX_{\lsup LP_{1,n}}^{(v_2; z_2)}$
is 
the moduli stack of parameters of the form
$$
\begin{bmatrix}
\alpha & * & * \\
& \beta & * \\
& & \alpha(-1)
\end{bmatrix} \rtimes *
$$
The key here is that $\beta$ has only finitely many choices.
So 
the image of $\cX_{\lsup LP_{1,n}}^{(v_2; z_2)}$
in
$\cX_{\lsup LM_{1,n}}$
has dimension at most
$1- \Aut_{\Gm\times \Gm \times \Gm}(\alpha, \beta, \alpha(-1))
=-2$;
thus
$\cX_{\lsup LP_{1,n}}^{(v_2; z_2)}\le -2 + 3[F:\bQ_p]+v_2
< 3[F:\bQ_p]=u_3$.
The $n=4$ and $G$ unitary case is similar.

Next consider $n=4$ and $G$ symplectic.
$\cX_{\lsup LP_{1,n}}^{(v_2; z_2)}$
is 
the moduli stack of parameters of the form
$$
\begin{bmatrix}
\alpha & * & * \\
& \tau & * \\
& & \alpha(-1)
\end{bmatrix}
$$
The key here is that $\alpha$
is determined by $\tau$
up to finitely many choices.
So 
the image of $\cX_{\lsup LP_{1,n}}^{(v_2; z_2)}$
in
$\cX_{\lsup LM_{1,n}}$
has dimension at most
$u_2-1$;
thus
$\cX_{\lsup LP_{1,n}}^{(v_2; z_2)}\le -1 + 3[F:\bQ_p]+u_2
< u_4$.
\end{proof}

Similar to the orthogonal case,
we have
\begin{align*}
\dim \cX_{\lsup LP_{1,n}}^{(v_2;z_2)}
&\le u_n + v_2 - \min_{\vec h=(*, v_2,\dots)}\delta^{\vec h}_{1,n}
\\
&\le u_n + v_2 - v_2^2/2.
\end{align*}
which is strictly smaller than $u_n$
unless $v_2=0,1,2$.
The $v_2=2$ case is excluded by part (3)
of Theorem \ref{thm:universal}.

So we have the following

\begin{thm}
    \label{thm:decomposition}
(1)
$\bigcup_{z_2=0,1}\cX_{\lsup LP_{1,n}}^{(0;z_2)}
\to \cX_{\lsup LM_{1,n}}
$
induces a bijection of top-dimensional components.

(2)
$\bigcup_{z_2=0,1}\cX_{\lsup LP_{1,n}}^{(1;z_2)}
\to \cX_{\lsup LM_{1,n}}
\to \cX_{\lsup LG_{n-2}}
$
induces a bijection of top-dimensional components.
\end{thm}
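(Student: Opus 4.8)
The plan is to mirror the proof of Theorem~\ref{thm:irr-orth}, the only new phenomenon being that $Z_{1,n}$ is nontrivial in the symplectic and unitary cases, so the stratum $z_2=1$ genuinely occurs and has to be folded into the component count. First I would note, using the dimension estimate $\dim\cX_{\lsup LP_{1,n}}^{(v_2;z_2)}\le u_n+v_2-v_2^2/2$ recorded just before the theorem together with part~(3) of Theorem~\ref{thm:universal} for $v_2=2$, that every top-dimensional irreducible component of $\cX_{\lsup LP_{1,n}}$ lies in $\bigcup_{v_2,z_2\in\{0,1\}}\cX_{\lsup LP_{1,n}}^{(v_2;z_2)}$; and by Lemma~\ref{lem:corner} the four corner configurations have dimension $<u_n$ and so contribute nothing. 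Thus it suffices, for each $v_2\in\{0,1\}$ and each $z_2\in\{0,1\}$ away from the corner cases, to analyse the morphism $\cX_{\lsup LP_{1,n}}^{(v_2;z_2)}\to\cX_{\lsup LM_{1,n}}^{(v_2;z_2)}$.

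The crux is that this morphism is surjective with geometrically irreducible fibers of one and the same dimension $d_{v_2}:=[F:\bQ_p]\dim U_{1,n}+v_2$, independently of $z_2$: for $z_2=0$ the fiber is the quotient stack $[H^1(\Gal_F,\Lie U_{1,n})/H^0(\Gal_F,\Lie U_{1,n})]$, while for $z_2=1$ it is, by Lemma~\ref{lem:irr-unitary}, an irreducible hypersurface inside that stack of the same dimension --- the codimension-one cut by the cup-product equation valued in $H^2(\Gal_F,Z_{1,n})$ being exactly compensated by the extra dimension of $H^1(\Gal_F,Z_{1,n})$ coming from $H^2(\Gal_F,Z_{1,n})\neq 0$. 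Since moreover $u_n-d_0=u_{n-2}=\dim\cX_{\lsup LM_{1,n}}$ (using $u_n-u_{n-2}=[F:\bQ_p]\dim U_{1,n}$ and that the $\Res_{K/F}\GL_1$-factor of $\cX_{\lsup LM_{1,n}}$ is $0$-dimensional), the fibration lemma used in the proof of Theorem~\ref{thm:irr-orth} applies on each stratum. For $v_2=0$ this gives a bijection between top-dimensional components of $\cX_{\lsup LP_{1,n}}^{(0;z_2)}$ and the top-dimensional components of $\cX_{\lsup LM_{1,n}}$ that lie in the $z_2$-stratum; as the similitude character $\lambda_\tau$ is locally constant on $\cX_{\lsup LG_{n-2}}$, each component of $\cX_{\lsup LM_{1,n}}$ lies in a single $z_2$-stratum, so summing over $z_2=0,1$ yields part~(1). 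Part~(2), with $v_2=1$, runs identically, after observing that $v_2=1$ pins $\alpha$ down as a Jordan--H\"older constituent of $\tau$ up to finitely many choices, so the residual map $\cX_{\lsup LM_{1,n}}^{(1;z_2)}\to\cX_{\lsup LG_{n-2}}$ has finite fibers and matches up top-dimensional components exactly as in Theorem~\ref{thm:irr-orth}(2).

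I expect the main obstacle to be the bookkeeping in this last step: making precise that assembling the $z_2=0$ and $z_2=1$ strata --- and, for part~(2), passing through the finite map to $\cX_{\lsup LG_{n-2}}$ --- neither drops nor double-counts a top-dimensional component. What makes the gluing work is precisely the coincidence that $d_{v_2}$ is independent of $z_2$, which is the content of Lemma~\ref{lem:irr-unitary}, together with the corner-case analysis of Lemma~\ref{lem:corner} that rules out a small-rank exception spoiling the count.
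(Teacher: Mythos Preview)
Your approach matches the paper's: mirror Theorem~\ref{thm:irr-orth}, invoking Lemma~\ref{lem:irr-unitary} for the irreducibility and constant dimension of the fibers over the $z_2=1$ stratum and Lemma~\ref{lem:corner} to dispose of the small-rank exceptions. One caveat: the assertion that $\lambda_\tau$ is locally constant on $\cX_{\lsup LG_{n-2}}$ is neither obvious nor needed --- the gluing over $z_2\in\{0,1\}$ already follows, as you yourself observe at the end, from the fact that the fiber dimension $d_{v_2}$ is independent of $z_2$, so the fibration lemma applies directly to the map $\bigcup_{z_2}\cX_{\lsup LP_{1,n}}^{(v_2;z_2)}\to\cX_{\lsup LM_{1,n}}^{(v_2;*)}$ without first stratifying the base.
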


\begin{proof}
The same proof as Theorem \ref{thm:irr-orth}
but use Lemma 
\ref{lem:irr-unitary}
and Lemma \ref{lem:corner}.
\end{proof}

We record the following technical lemma:

\begin{lem}
\label{lem:properV}
Suppose $V'\subset V_{1,n}$
is a proper $\lsup LM_{1,n}$-stable
sub-vector space scheme.
Then
$$\dim \cX_{Z_{1,n} \rtimes V' \rtimes \lsup LM_{1,n}}
<\dim \cX_{\lsup LP_{1,n}}.$$
\end{lem}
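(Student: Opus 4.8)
The plan is to compare the two stacks fiberwise over $\cX_{\lsup LM_{1,n}}$. First, I would stratify $\cX_{\lsup LM_{1,n}}$ into locally closed substacks $\cS$ on which the Poincar\'e polynomials of the Galois modules $V_{1,n}$, $Z_{1,n}$ and $V'$ (attached to the tautological parameter) are all constant; this refines the stratification by $(v_2;z_2)$ and is legitimate by semicontinuity of Herr complexes, and it suffices to bound the dimension of each of the two stacks over every $\cS$. Over such an $\cS$, the map $\cX_{\lsup LP_{1,n}}\to\cX_{\lsup LM_{1,n}}$ is surjective with fibers of dimension exactly $v_2+[F:\bQ_p]\dim U_{1,n}$ — a hypersurface in, or all of, $[H^1(\Gal_F,\Lie U_{1,n})/H^0(\Gal_F,\Lie U_{1,n})]$ — by Lemma~\ref{lem:irr-unitary} and its analogue for orthogonal groups underlying Theorem~\ref{thm:irr-orth}. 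For the other stack I would factor $\cX_{Z_{1,n}\rtimes V'\rtimes\lsup LM_{1,n}}\to\cX_{\lsup LM_{1,n}}$ through $\cX_{V'\rtimes\lsup LM_{1,n}}$ and bound each stage, crudely, by the \emph{full} quotient stacks $[H^1(\Gal_F,V')/H^0(\Gal_F,V')]$ and $[H^1(\Gal_F,Z_{1,n})/H^0(\Gal_F,Z_{1,n})]$ (discarding the cup-product constraint); by the local Euler characteristic formula its fiber dimension over $\cS$ is then at most $\dim H^2(\Gal_F,V')+z_2+[F:\bQ_p](\dim V'+\dim Z_{1,n})$.

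Feeding these into Lemma~\ref{lem:fiber}, over each $\cS$ the dimension of $\cX_{\lsup LP_{1,n}}$ exceeds that of $\cX_{Z_{1,n}\rtimes V'\rtimes\lsup LM_{1,n}}$ by at least
\[
\bigl(v_2-\dim H^2(\Gal_F,V')-z_2\bigr)+[F:\bQ_p]\bigl(\dim V_{1,n}-\dim V'\bigr),
\]
where I used $\dim U_{1,n}-\dim Z_{1,n}=\dim V_{1,n}$; the second summand is $\ge[F:\bQ_p]\ge1$ because $V'$ is a \emph{proper} sub-vector-space scheme. Since only finitely many $\cS$ are nonempty, it remains to show this gap is strictly positive, i.e.\ that $v_2-\dim H^2(\Gal_F,V')-z_2$ is not too negative. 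For that I would classify the proper $\lsup LM_{1,n}$-stable subspaces: as a representation of $\breve M_{1,n}$, the space $V_{1,n}$ is, up to a twist by a character of its $\GL_1$-factor(s), the standard representation of the classical group $\breve G_{n-2}$ in the symplectic and orthogonal cases and $\std\oplus\std^\vee$ of $\GL_{n-2}$ in the unitary case; the former is irreducible unless $\breve G_{n-2}$ is the two-torus $\GSO_2$ (that is, $G$ orthogonal with $n=4$), and in the unitary case the summands $\wt V_I,\wt V_{II}$ are interchanged by $\j$, hence not $\lsup LM_{1,n}$-stable. So either $V'=0$, or $G$ is orthogonal with $n=4$ and $V'$ is one of the two character lines of $V_{1,4}=L_1\oplus L_2$.

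If $V'=0$, then $\dim H^2(\Gal_F,V')=0$, and $V'\subsetneq V_{1,n}$ forces $V_{1,n}\neq0$, which — as $n\ge4$ in the symplectic case and $n\ge3$ in the unitary case — forces $\dim V_{1,n}\ge2$, so $[F:\bQ_p]\dim V_{1,n}\ge2$; since $z_2=1$ can occur only for $G$ symplectic or unitary, this beats $z_2$ in every case and the gap is strictly positive. If instead $G$ is orthogonal with $n=4$ and $V'=L_i$, then $Z_{1,n}=0$ so $z_2=0$, and the splitting $V_{1,4}=L_1\oplus L_2$ is $\Gal_F$-equivariant, whence $\dim H^2(\Gal_F,V')\le\dim H^2(\Gal_F,V_{1,4})=v_2$ and the gap is $\ge[F:\bQ_p]\ge1$. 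In all cases this gives $\dim\cX_{Z_{1,n}\rtimes V'\rtimes\lsup LM_{1,n}}<\dim\cX_{\lsup LP_{1,n}}$.

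I expect the only genuinely delicate step to be this last one: the crude fiber bound yields only $\ge0$ a priori in the corner where $z_2=1$ and $[F:\bQ_p]$ and $\dim V_{1,n}$ are both minimal, and it is precisely the classification of $\lsup LM_{1,n}$-stable subspaces that rescues the estimate — it restricts a nonzero proper $V'$ to a situation in which it splits off $V_{1,n}$ as a Galois module (forcing $\dim H^2(\Gal_F,V')\le v_2$), and it confines the $z_2=1$ phenomenon to ranks in which $V_{1,n}$ is already at least two-dimensional.
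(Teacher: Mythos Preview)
Your approach is correct and follows the same fiberwise strategy as the paper: compare the fibers of the two stacks over each point of $\cX_{\lsup LM_{1,n}}$, using that the fibers of $\cX_{\lsup LP_{1,n}}$ have dimension exactly $v_2+[F:\bQ_p]\dim U_{1,n}$ (whether or not $z_2=1$). The paper's proof is terser: it simply asserts that $F_1$ is a hypersurface in $[H^1(Z_{1,n}\times V')/H^0(Z_{1,n}\times V')]$ under the \emph{same} condition as $F_2$ is a hypersurface in $[H^1(\Lie U)/H^0(\Lie U)]$, so the two ``$-1$'' corrections cancel and one is left comparing Euler characteristics of $\Lie U$ versus $Z_{1,n}\times V'$. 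You instead discard the cup-product constraint on $F_1$ entirely (your ``crude'' bound), which costs an extra $-z_2$ in the gap, and then recover strict positivity by classifying the possible $\lsup LM_{1,n}$-stable $V'$ and invoking $[F:\bQ_p]\dim V_{1,n}\ge 2$ in the symplectic and unitary cases. Both routes reach the conclusion; yours is more explicit and in fact sidesteps a subtlety in the paper's formulation --- when $V'=0$ and $z_2=1$ the restricted quadric on $H^1(V')=0$ is identically zero, so $F_1$ is \emph{not} literally a proper hypersurface, and it is precisely your $\dim V_{1,n}\ge 2$ cushion that absorbs the discrepancy.
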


\begin{proof}
Consider fibers $F_1$ and $F_2$
of
$\cX_{Z_{1,n} \rtimes V' \rtimes \lsup LM_{1,n}}$
and
$\cX_{\lsup LP_{1,n}}$
over
some point $\spec \bFp \to \cX_{\lsup LM_{1,n}}$.
$F_2$ is either isomorphic
to $[H^1(\Gal_F, \Lie U)/H^0(\Gal_F, \Lie U)]$
or a hypersurface thereof;
$F_1$ is either isomorphic
to $[H^1(\Gal_F, Z_{1,n}\times V')/H^0(\Gal_F, Z_{1,n}\times V')]$
or a hypersurface thereof;
the condition when $F_1$ is a hypersurface
is the same as the condition when
$F_2$ is a hypersurface.
\end{proof}

\begin{cor}
(1)
The closed substack
of $\cX_{\lsup LP_{1,n}}$
consisting of parameters $\rho$
admitting
$g\in \bG_n$ and $g\not\in \lsup LP_{1,n}$
such that $g\rho g^{-1}:\Gal_F\to \lsup LG_n$
also factors through $\lsup LP_{1,n}$
has dimension strictly smaller
than $\dim \cX_{\lsup LP_{1,n}}$.

(2)
The natural map
$f:\cX_{\lsup LP_{1,n}}\to\cX_{\lsup LG_n}$
induces a bijection
of top-dimensional irreducible components.
\label{cor:mns}
\end{cor}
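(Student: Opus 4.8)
The plan is to prove (1) directly, by classifying the ``extra'' isotropic line a bad parameter carries, and then to deduce (2) from (1) by a standard generic‑quasi‑finiteness argument.

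\textbf{Proof of (1).} A point of the locus in question is a parameter $\rho$ whose underlying representation preserves, as subrepresentations, the standard isotropic line $\ell$ of $\lsup LP_{1,n}$ together with some isotropic line $\ell'\ne\ell$. I would split the locus according to the position of $\ell'$ relative to $\ell^{\perp}$. If $\ell'\not\subseteq\ell^{\perp}$, then $\ell\oplus\ell'$ is a non‑degenerate plane stable under $\rho$, so $\rho=\rho|_{\ell\oplus\ell'}\oplus\rho|_{(\ell\oplus\ell')^{\perp}}$ with the first summand split; since any isotropic line transverse to $\ell$ is $\bP_{1,n}$‑conjugate to the standard complementary one (Witt extension), $\rho$ is $\bP_{1,n}$‑conjugate to a parameter valued in the Levi $\lsup LM_{1,n}$. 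Hence this part of the locus lies in the image of $\cX_{\lsup LM_{1,n}}\to\cX_{\lsup LP_{1,n}}$, of dimension $\le\dim\cX_{\lsup LM_{1,n}}=u_{n-2}<u_n$ (Corollary \ref{cor:relative-dim}, together with $\dim\cX_{\lsup LG_{n-2}}=u_{n-2}$ from Corollary \ref{cor:main}). If instead $\ell'\subseteq\ell^{\perp}$ with $\ell'\ne\ell$, then $\ell'$ maps isomorphically onto an isotropic line $\bar\ell'$ of $\tau:=\rho|_{\lsup LG_{n-2}}$, and the sub‑extension of $\bar\ell'$ by $\alpha$ cut out inside $\rho$ must split. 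I would bound this part stratum by stratum, using the strata $\cX_{\lsup LP_{1,n}}^{(v_2;z_2)}$ of the preceding section, where only $v_2\le1$ is relevant (Theorem \ref{thm:universal}): project to $\cX_{\lsup LM_{1,n}}^{(v_2;z_2)}$, whose dimension is $\le u_{n-2}-v_2$ because the fibres of $\cX_{\lsup LP_{1,n}}^{(v_2;z_2)}\to\cX_{\lsup LM_{1,n}}^{(v_2;z_2)}$ have constant dimension $[F:\bQ_p]\dim U_{1,n}+v_2$ while $\dim\cX_{\lsup LP_{1,n}}^{(v_2;z_2)}\le u_n$. The splitting condition is a nonzero linear condition on the $\wt V_I$‑component of the extension class, and by non‑degeneracy of the cup product $H^1(\Gal_K,\wt V_I)\times H^1(\Gal_K,\wt V_{II})\to H^2(\Gal_K,Z_{1,n})$ (Lemma \ref{lem:inner} / local Tate duality) it is transversal to the cup‑product cone, hence drops the fibre dimension by at least $1$. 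The total is then $\le(u_{n-2}-v_2)+([F:\bQ_p]\dim U_{1,n}+v_2-1)=u_n-1$, using $u_n-u_{n-2}=[F:\bQ_p]\dim U_{1,n}$.

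\textbf{Deducing (2) from (1).} By Corollary \ref{cor:main} and the previous section, $\dim\cX_{\lsup LG_n}=\dim\cX_{\lsup LP_{1,n}}=u_n$, and by Lemma \ref{lem:image-P1} every top‑dimensional component of $\cX_{\lsup LG_n}$ lies in the image of the union $\mathcal T$ of the top‑dimensional components of $\cX_{\lsup LP_{1,n}}$. Outside the locus of (1), the underlying representation of $\rho$ has a unique isotropic line, so the fibre of $f$ over $f(\rho)$ — the set of isotropic lines of $\rho$ modulo $\Aut_{\bG_n}(\rho)$ — is $0$‑dimensional; thus $f$ is quasi‑finite on the open dense complement of that locus in each component of $\mathcal T$. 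Hence $\overline{f(C)}$ is a top‑dimensional component of $\cX_{\lsup LG_n}$ for every top component $C$ of $\cX_{\lsup LP_{1,n}}$, and two distinct components $C_1\ne C_2$ cannot satisfy $\overline{f(C_1)}=\overline{f(C_2)}$: a generic point of $C_1$ would then be $\bG_n$‑conjugate, but not $\bP_{1,n}$‑conjugate, to a point of $C_2$, placing it in the locus of (1), which is of strictly smaller dimension than $C_1$. Together with the surjectivity just noted, $C\mapsto\overline{f(C)}$ is a bijection.

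\textbf{Main obstacle.} The delicate step is the transversality assertion in the second case of (1): one must check that the restriction map on $H^1$ defining the ``splitting'' subspace is genuinely nonzero and meets the cup‑product hypersurface properly. This can fail only for a short explicit list of small $n$ and $[F:\bQ_p]$ — essentially the regime where $\tau$ is forced anisotropic (so the second case is vacuous) or the relevant Galois cohomology is too small — mirroring the corner‑case list before Lemma \ref{lem:corner}; those strata are then disposed of by hand, being lower‑dimensional because the Levi data $(\alpha,\tau)$ is rigid there.
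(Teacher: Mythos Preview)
Your deduction of (2) from (1) matches the paper's argument essentially verbatim.

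For (1), however, the paper takes a shorter and structurally different route. Rather than classifying the second isotropic line $\ell'$ by its position relative to $\ell^{\perp}$ and then running a fiberwise codimension count, the paper invokes the Bruhat decomposition $\bG_n=\brB_n W\brB_n$ to reduce to finitely many $g$'s (Weyl representatives), writes $\lsup LP_{1,n}\cap g\,\lsup LP_{1,n}\,g^{-1}=(Z_{1,n}\rtimes V')\rtimes \lsup LM_{1,n}$ with $V'\subsetneq V_{1,n}$, and applies Lemma~\ref{lem:properV}. The point of that lemma is an Euler-characteristic comparison of fibers over $\cX_{\lsup LM_{1,n}}$: the fiber of $\cX_{(Z_{1,n}\rtimes V')\rtimes \lsup LM_{1,n}}$ is $[H^1(Z_{1,n}\times V')/H^0(Z_{1,n}\times V')]$ (or a hypersurface thereof when $z_2=1$), while the fiber of $\cX_{\lsup LP_{1,n}}$ is the analogous object with $V'$ replaced by $V_{1,n}$; crucially, the hypersurface condition is \emph{the same} on both sides, so the dimension drop is simply $[F:\bQ_p](\dim V_{1,n}-\dim V')+h^2(V_{1,n})-h^2(V')$. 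This completely sidesteps your transversality worry: there is no need to argue that a linear splitting condition meets the cup-product cone properly, because the cone condition, when present, appears identically in both terms of the comparison and cancels.

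Your geometric case split is really the same dichotomy in disguise (your two cases are the two nontrivial $W_{M}$-double cosets), but your second case carries avoidable baggage: you must control the variation of $\bar\ell'$ as $\tau$ moves, and you must justify that a hyperplane meets an irreducible quadric properly, with an explicit list of degenerate corner cases to dispose of by hand. None of this is wrong, but Lemma~\ref{lem:properV} does the job in one stroke without it.
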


\begin{proof}
(1)
Write $\lsup LP_{1,n}\cap g \lsup LP_{1,n}g^{-1}
=(Z_{1,n}\rtimes V')\rtimes \lsup LM_{1,n}$
and apply Lemma \ref{lem:properV}.
By the Bruhat decomposition
$\bG_n = \brB_n W \brB_n$,
we only need to take $g$
to be representatives of the Weyl group $W$.

(2)
By Lemma \ref{lem:image-P1},
it suffices to show two distinct top-dimensional
irreducible components
$C_1\neq C_2$
 of $\cX_{\lsup LP_{1,n}}$
map to distinct top-dimensional components
$D_1\neq D_2$
 of
$\cX_{\lsup LG_n}$.
We remove from $C_1$, $C_2$, $D_1$, $D_2$
the nowhere dense closed substack
consisting of parameters
$\rho$ satisfying the condition
defined in part (1).

We claim that $x:\spec \bFp \to \cX_{\lsup LP_{1,n}}$
factors through $C_1$
if and only if $f_*x:\spec \bFp \to \cX_{\lsup LG_{n}}$
factors through $D_1$.
If $f_*x:\spec \bFp \to \cX_{\lsup LG_{n}}$
factors through $D_1$,
then there exists
$y:\spec \bFp\to C_1$
such that $f_*x=f_*y$.
So $x$ and $y$ are 
equal
up to $\bG_n$-conjugate.
Say $x$ and $y$
correspond to parameters $\rho_x$
and $\rho_y$;
we have $\rho_x = g\rho_y g^{-1}$.
We must have $g\in \bP_{1,n}$
because the bad locus has been removed.
Therefore $x=y$
and $x$ factors through $C_1$.
\end{proof}

\section{The topological Breuil-M\'ezard conjecture}

Denote by $\Chtop(\cX)$
the abelian group of top-dimensional cycles on $\cX$.

\begin{prop}
\label{prop:decomposition}
There exist natural isomorphisms
$$
\Chtop(\cX_{\lsup LM_{1,n}})
\oplus
\Chtop(\cX_{\lsup LG_{n-2}})
\xleftarrow[(p_0,p_1)]{\cong}
\Chtop(\cX_{\lsup LP_{1,n}})
\xrightarrow{\cong}
\Chtop(\cX_{\lsup LG_{n}})
$$
\end{prop}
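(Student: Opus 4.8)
The plan is to assemble both isomorphisms from component-level bijections already in hand, using that $\Chtop(\cX)$ is the free abelian group on the top-dimensional irreducible components of $\cX$. Thus any morphism of finite-type $\bFp$-stacks that carries each top-dimensional component either dominantly onto a top-dimensional component or into a substack of strictly smaller dimension induces a homomorphism on $\Chtop$ (sending $[C]$ to the class of the closure of its image in the first case, and to $0$ in the second), and such a homomorphism is an isomorphism as soon as the induced assignment on top-dimensional components is a bijection.

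\textbf{The isomorphism $\Chtop(\cX_{\lsup LP_{1,n}})\xrightarrow{\cong}\Chtop(\cX_{\lsup LG_n})$.} By Lemma \ref{lem:image-P1} every top-dimensional component of $\cX_{\lsup LG_n}$ meets the image of $f\colon\cX_{\lsup LP_{1,n}}\to\cX_{\lsup LG_n}$, and $\dim\cX_{\lsup LP_{1,n}}=\dim\cX_{\lsup LG_n}=u_n$ (Corollary \ref{cor:main}, together with Corollary \ref{cor:mns}, which in particular matches top components on the two sides), so $f$ sends each top-dimensional component of the source dominantly onto one of the target. Part (2) of Corollary \ref{cor:mns} is exactly the statement that the resulting assignment is a bijection on top-dimensional components; its proof (removing the nowhere-dense locus of parameters conjugate into $\lsup LP_{1,n}$ in more than one way, over which $f$ becomes injective on $\bFp$-points) also shows the pushforward has degree $1$. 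Hence $f$ induces the claimed isomorphism.

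\textbf{The isomorphism $(p_0,p_1)$.} First I would stratify the top-dimensional components of $\cX_{\lsup LP_{1,n}}$ by the integer $v_2=\dim H^2(\Gal_F,V_{1,n})$. Combining the bound $\dim\cX_{\lsup LP_{1,n}}^{(v_2;z_2)}\le u_n+v_2-\tfrac{1}{2}v_2^2$ (from Theorem \ref{thm:universal}, via the fiberwise description of $\cX_{\lsup LP_{1,n}}\to\cX_{\lsup LM_{1,n}}$) with part (3) of Theorem \ref{thm:universal} and local Tate duality to rule out $v_2=2$, and with Lemma \ref{lem:corner} to dispose of the finitely many corner cases, one sees that every top-dimensional component of $\cX_{\lsup LP_{1,n}}$ lies in the closure of $\cX_{\lsup LP_{1,n}}^{(0;z_2)}$ or of $\cX_{\lsup LP_{1,n}}^{(1;z_2)}$. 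This gives a splitting $\Chtop(\cX_{\lsup LP_{1,n}})=A\oplus B$ with $A$ spanned by the ``$v_2=0$'' components and $B$ by the ``$v_2=1$'' ones. By Theorem \ref{thm:irr-orth}(1) (orthogonal) and Theorem \ref{thm:decomposition}(1) (symplectic/unitary), the Levi projection $\pi\colon\cX_{\lsup LP_{1,n}}\to\cX_{\lsup LM_{1,n}}$ restricts to a bijection between the $v_2=0$ components and all top-dimensional components of $\cX_{\lsup LM_{1,n}}$ — here the fibers of $\pi$ over that stratum are $[H^1(\Gal_F,\Lie U_{1,n})/H^0(\Gal_F,\Lie U_{1,n})]$, irreducible of the expected dimension, so the bookkeeping map agrees with the honest pushforward — whence $p_0:=\pi_*$ restricts to an isomorphism $A\xrightarrow{\cong}\Chtop(\cX_{\lsup LM_{1,n}})$; and since a ``$v_2=1$'' component is carried by $\pi$ into the locus $\{\dim H^2(\Gal_F,V_{1,n})\ge 1\}$, which is nowhere dense (the $v_2=0$ stratum already dominates every top component of $\cX_{\lsup LM_{1,n}}$), we get $p_0(B)=0$. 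Symmetrically, Theorem \ref{thm:irr-orth}(2) and Theorem \ref{thm:decomposition}(2) say the composite $\cX_{\lsup LP_{1,n}}\xrightarrow{\pi}\cX_{\lsup LM_{1,n}}\to\cX_{\lsup LG_{n-2}}$ restricts to a bijection between the ``$v_2=1$'' components and all top components of $\cX_{\lsup LG_{n-2}}$, so $p_1$ restricts to an isomorphism $B\xrightarrow{\cong}\Chtop(\cX_{\lsup LG_{n-2}})$. Relative to $A\oplus B$ the map $(p_0,p_1)$ is then block lower triangular with invertible diagonal blocks, hence an isomorphism; naturality is automatic since $p_0$, $p_1$, $f$ are induced by fixed morphisms of stacks.

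\textbf{Main obstacle.} The substance is all front-loaded into Corollary \ref{cor:mns} and Theorems \ref{thm:irr-orth}, \ref{thm:decomposition}, \ref{thm:universal}; what still needs genuine care in this proof is (i) the clean cut-off of the $v_2$-stratification, i.e. that the bound $\dim\cX_{\lsup LP_{1,n}}^{(v_2;z_2)}\le u_n+v_2-\tfrac{1}{2}v_2^2$ together with the exceptional-case bookkeeping really confines every top-dimensional component to $v_2\in\{0,1\}$ (Theorem \ref{thm:universal}(3), local Tate duality, and Lemma \ref{lem:corner} are indispensable here), and (ii) the identification of $p_0,p_1$ with honest pushforwards along $\pi$ and along $\pi$ followed by projection, for which the fibers of $\pi$ over each stratum must be irreducible of the expected dimension — either an affine bundle $[H^1/H^0]$ or the irreducible hypersurface in it cut out by a non-degenerate symmetric form (Lemma \ref{lem:mixed-term}, Lemma \ref{lem:irr-unitary}).
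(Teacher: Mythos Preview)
Your proposal is correct and follows essentially the same approach as the paper: the paper's proof is the single line ``Combine Theorem \ref{thm:decomposition} and Corollary \ref{cor:mns}'', and you have simply unpacked how those two results (together with Theorem \ref{thm:irr-orth} in the orthogonal case) yield the bijections on top-dimensional components that freely generate the isomorphisms on $\Chtop$. Your extra bookkeeping---the $A\oplus B$ splitting by $v_2\in\{0,1\}$, the observation that $p_0(B)=0$ because the $v_2\ge 1$ locus in $\cX_{\lsup LM_{1,n}}$ is nowhere dense, and the block-triangular structure of $(p_0,p_1)$---is a legitimate elaboration of what the paper leaves implicit.
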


\begin{proof}
Combine Theorem \ref{thm:decomposition}
and Corollary 
\ref{cor:mns}.
\end{proof}

Denote by $\alpha_1^\vee:\Gm\to \bT_n$
the {\it unique} simple positive coroot
(up to $\Gal(K/F)$-permutation)
of $\bG_n$
appearing in $\Lie U_{1,n}$,
which induces
a map
$$
\lsup L\Res_{K/F}\Gm \to \lsup LT_n;
$$
so $\alpha_1^\vee$ induces a natural isomorphism
$$
\cX_{\lsup LM_{1,n}}
\cong
\cX_{\lsup L\Res_{K/F}\Gm}
\times
\cX_{\lsup LG_{n-2}}.
$$
For the sake of uniformity,
we denote by $\{\infty\}\times \Chtop(\cX_{\lsup LG_{n-2}})$
the image of $p_1$ in Proposition \ref{prop:decomposition},
and rewrite $(p_0, p_1)$
as
$$
\Chtop(\cX_{\lsup LP_{1,n}})
\xrightarrow{\cong}
(\Chtop(\cX_{\lsup L\Res_{K/F}\Gm}) \cup \{\infty\})
\times
\Chtop(\cX_{\lsup LG_{n-2}}).
$$

\begin{dfn}
Use the notation
$$
\text{($\dagger$)}\hspace{10mm}
X_1(K, \Gm)
:=
\Chtop(\cX_{\lsup L\Res_{K/F}\Gm}) \cup \{\infty\}
$$
and denote by
$$
\alpha_1^\vee:
\Chtop(\cX_{\lsup LG_n})
\to 
X_1(K, \Gm)
$$
the composite map.
\label{dfn:x1}
\end{dfn}

\begin{thm}
    \label{thm:main2}
Denote by $\{\alpha_1, \dots, \alpha_r\}$
the simple positive roots
so that the Dynkin diagram of $\bG_n$
is
\begin{itemize}
\item[(Type A)] 
\dynkin[
  labels={\alpha_1,\alpha_2,\dots,
        \alpha_r, \dots
        , \j\alpha_2,\j\alpha_1},
  scale=2
]{A}{7}
\item[(Type B)] 
\dynkin[
  labels={\alpha_1,\alpha_2,\alpha_3,\dots,\alpha_r},
  scale=2
]{B}{}
\item[(Type C)] 
\dynkin[
  labels={\alpha_1,\alpha_2,\alpha_3,\dots,\alpha_r},
  scale=2
]{C}{}
\item[(Type D)] 
\dynkin[
labels={\alpha_1,\alpha_2,\alpha_3,\dots,\alpha_{r-1}, \alpha_{r}},
  scale=2
]{D}{}
\end{itemize}
There exist
natural isomorphisms
\begin{itemize}
\item[($A_{2n}$)]
$
(\alpha_1^\vee, \dots,\alpha_n^\vee, \Remain
):
\Chtop(\cX_{\lsup LU_{2n+1}})
\xrightarrow{\cong} X_1(K, G_m)^{\times n}
\times 
\Chtop(\cX_{\lsup LU_{1}})
$,
\item[($A_{2n+1}$)]
$
(\alpha_1^\vee, \dots,\alpha_n^\vee, \Remain
):
\Chtop(\cX_{\lsup LU_{2n+2}})
\xrightarrow{\cong} X_1(K, G_m)^{\times n}
\times 
\Chtop(\cX_{\lsup LU_{2}})
$,
\item[($B_{n}$)]
$
(\alpha_1^\vee, \dots,\alpha_{n-1}^\vee, \Remain
):
\Chtop(\cX_{\GSp_{2n}})
\xrightarrow{\cong} X_1(K, G_m)^{\times (n-1)}
\times 
\Chtop(\cX_{\GL_2})
$,
\item[($C_{n}$)]
$
(\alpha_1^\vee, \dots,\alpha_{n-1}^\vee, \Remain
):
\Chtop(\cX_{\GSO_{2n+1}})
\xrightarrow{\cong}X_1(K, G_m)^{\times (n-1)}
\times 
\Chtop(\cX_{\GSO_3})
$,
\item[($D_{n}$)]
$
(\alpha_1^\vee, \dots,\alpha_{n-1}^\vee, \Remain
):
\Chtop(\cX_{\GSO_{2n}})
\xrightarrow{\cong} X_1(K, G_m)^{\times (n-2)}
\times 
\Chtop(\cX_{\GSO_4})
$,
\end{itemize}
\end{thm}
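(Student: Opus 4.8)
The plan is to iterate Proposition~\ref{prop:decomposition}. A single inductive step is immediate from the material already assembled: combining the isomorphism $\Chtop(\cX_{\lsup LP_{1,n}})\xrightarrow{\cong}\Chtop(\cX_{\lsup LG_n})$ of Proposition~\ref{prop:decomposition}, the identification $\cX_{\lsup LM_{1,n}}\cong\cX_{\lsup L\Res_{K/F}\Gm}\times\cX_{\lsup LG_{n-2}}$ furnished by $\alpha_1^\vee$, and the rewriting of $(p_0,p_1)$ recorded just before Definition~\ref{dfn:x1}, one obtains a natural isomorphism
$$
(\alpha_1^\vee,\ \Remain)\colon\ \Chtop(\cX_{\lsup LG_n})\ \xrightarrow{\ \cong\ }\ X_1(K,\Gm)\times\Chtop(\cX_{\lsup LG_{n-2}}),
$$
with $X_1(K,\Gm)$ as in $(\dagger)$ of Definition~\ref{dfn:x1} (the top-dimensional components of $\cX_{\lsup L\Res_{K/F}\Gm}$ together with the symbol $\infty$ recording the image of $p_1$), the first factor detected by $\alpha_1^\vee$, and $\Remain$ the resulting projection onto $\Chtop(\cX_{\lsup LG_{n-2}})$.

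Next I would pin down the Dynkin-diagram bookkeeping that makes the iteration go. By the family-by-family analysis of Section~\ref{sec:classic}, $\lsup LP_{1,n}\subset\lsup LG_n$ is cut out by deleting from the labelled diagram of $\bG_n$ the end node $\alpha_1$ --- the symmetric pair $\{\alpha_1,\j\alpha_1\}$ in type $A$ --- and the diagram that remains, after the shift $\alpha_i\mapsto\alpha_{i-1}$, is the labelled diagram of $\bG_{n-2}$, again of the same type and again carrying the $\Delta$-involution structure; in particular the coroot designated $\alpha_1^\vee$ for $\bG_{n-2}$ in Definition~\ref{dfn:x1} is the image of $\alpha_2^\vee$ of $\bG_n$ under the $L$-embeddings $\lsup LG_{n-2}\hookrightarrow\lsup LM_{1,n}\hookrightarrow\lsup LG_n$. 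Consequently, applying the displayed isomorphism $\nu$ times and folding the successive projections into a single map $\Remain$ produces a natural isomorphism
$$
(\alpha_1^\vee,\dots,\alpha_\nu^\vee,\ \Remain)\colon\ \Chtop(\cX_{\lsup LG})\ \xrightarrow{\ \cong\ }\ X_1(K,\Gm)^{\times\nu}\times\Chtop(\cX_{\lsup LG'}),
$$
where $\lsup LG'$ is the member of the family reached after $\nu$ steps.

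It then remains to read off $\nu$ and $\lsup LG'$ in each case, which is a direct inspection of the labelled diagrams: the inductive step above applies precisely as long as $\lsup LG_{n-2}$ is a non-trivial member of the same family with the structure of Section~\ref{sec:classic}, i.e.\ as long as the diagram still has a distinguished end node $\alpha_1$ to delete and $\lsup LP_{1,n}$ is a \emph{maximal} proper parabolic. For $\GSp_{2n}$ one descends through $\GSp_{2n-2},\GSp_{2n-4},\dots$ to $\GSp_2=\GL_2$, so $\nu=n-1$; for $\GSO_{2n+1}$ one descends to $\GSO_3$, so $\nu=n-1$; for $\GSO_{2n}$ one descends to $\GSO_4$, so $\nu=n-2$ (the fork $D_2=A_1\times A_1$ being forced, as it has no single distinguished end node and there the $\GL_1$-parabolic degenerates to a Borel); for $\lsup LU_{2n+1}$ one descends through the odd ranks to $\lsup LU_1$, so $\nu=n$, and for $\lsup LU_{2n+2}$ through the even ranks to $\lsup LU_2$, so $\nu=n$ (deleting a symmetric pair preserves the parity of the rank). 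Substituting these values of $\nu$ and $\lsup LG'$, and matching the $\nu$ coroots produced against the labels fixed in the statement, yields the five asserted isomorphisms.

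The main obstacle is precisely this last matching: checking, uniformly over all five families, that the abstract composites $\alpha_i^\vee$ and $\Remain$ of Definition~\ref{dfn:x1} coincide with the coroot labels attached to the explicit diagrams, and that the descent terminates at exactly the stated base group in each case (this is where the description in Section~\ref{sec:classic} of the maximal proper $\Delta$-stable sub-diagrams is used). The genuinely hard input --- that each step is an isomorphism on top Chow groups --- is already packaged in Proposition~\ref{prop:decomposition}, and behind it in Theorem~\ref{thm:decomposition}, Corollary~\ref{cor:mns} and the cone estimates of Section~\ref{thm:universal}'s predecessors; so once the labels are reconciled the theorem is formal.
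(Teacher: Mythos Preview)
Your proposal is correct and follows exactly the approach the paper intends: the paper does not even supply a separate proof for Theorem~\ref{thm:main2}, since after Proposition~\ref{prop:decomposition} and the rewriting preceding Definition~\ref{dfn:x1} the statement is obtained by straightforward iteration down the Dynkin diagram, which is precisely what you spell out. Your identification of the terminal groups and step counts in each family matches the paper's.
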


\begin{cor}
\label{cor:main3}
All top-dimensional irreducible components
are covered by algebraic cycles
defined by reduction mod $p$
of de Rham stacks of regular Hodge type.
\end{cor}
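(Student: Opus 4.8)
The plan is to show that the isomorphisms of Theorem~\ref{thm:main2} are realized by effective cycles of de Rham type, by re-running the induction that proves that theorem while tracking de Rham lifts. One inclusion is already contained in the proof of Corollary~\ref{cor:main}: by \cite{BG19}, for every regular Hodge type $\lambda$ and every inertial type the de Rham lifting ring $\spec R$ of a parameter admitting such a lift is $p$-flat of dimension $u_n+1+\dim\bG_n$, so the cycle $\cZ:=[\spec R\otimes\bFp/\bG_n]_{\red}\subset\cX_{\lsup LG_n}$ is closed of pure dimension $u_n=\dim\cX_{\lsup LG_n}$, whence every irreducible component of $\cZ$ is a top-dimensional component of $\cX_{\lsup LG_n}$. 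It therefore suffices to produce, for each top-dimensional component $C$, a regular Hodge type $\lambda$ and an $\bFp$-point $\bar\rho\in C$ lying on no other top-dimensional component of $\cX_{\lsup LG_n}$ and on none of the nowhere-dense bad loci of Corollary~\ref{cor:mns}, such that $\bar\rho$ admits a de Rham lift of Hodge type $\lambda$: then the corresponding $\cZ$ is a closed substack of pure dimension $u_n$ meeting $C$ at $\bar\rho$, so one of its components equals $C$ and $C\subseteq\cZ$.

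We induct on $n$ within each of the five families of Theorem~\ref{thm:main2}. The base cases $\cX_{\lsup LU_1}$, $\cX_{\lsup LU_2}$, $\cX_{\GL_2}$, $\cX_{\GSO_3}$, $\cX_{\GSO_4}$ reduce --- via restriction of scalars and the isogenies relating $\GSO_3$, $\GSO_4$ and $\lsup LU_2$ to products of $\GL_2$ and tori, compatibly with Hodge--Tate cocharacters (cf.\ Corollary~\ref{cor:relative-dim}(2)) --- to the torus case, where the reduced Emerton--Gee stack is $0$-dimensional and each $\bFp$-point is a crystalline character of strictly dominant, hence regular, Hodge type, and to the $\GL_2$ case, where \cite{EG23} places a crystalline point of regular Hodge type on each top-dimensional component. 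For the inductive step, let $C$ be a top-dimensional component of $\cX_{\lsup LG_n}$. By Corollary~\ref{cor:mns}(2) it is the image of a unique top-dimensional component $\wt C$ of $\cX_{\lsup LP_{1,n}}$, which by Theorem~\ref{thm:decomposition} (resp.\ Theorem~\ref{thm:irr-orth} when $G$ is orthogonal) dominates a top-dimensional component $A\times C'$ of $\cX_{\lsup LM_{1,n}}\cong\cX_{\lsup L\Res_{K/F}\Gm}\times\cX_{\lsup LG_{n-2}}$, with $A$ a character cycle (or the symbol $\{\infty\}$) and $C'$ a top-dimensional component of $\cX_{\lsup LG_{n-2}}$. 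By induction a general $\tau\in C'$ has a de Rham lift $\wt\tau$ of a regular Hodge type $\mu$; choose a crystalline character lift $\wt\alpha$ of the residual character appearing in $A$ --- there is an infinite family of these, hence freedom in its Hodge--Tate cocharacter $\nu$, even in the $\{\infty\}$-case where the residual character is pinned down by $\tau$ and the similitude --- with $\nu$ so dominant that (i) the combined Hodge type $\lambda$ on $\lsup LG_n$ is regular, and (ii) the Hodge--Tate weights of the extension modules $\bar V_{1,n}$, $\bar Z_{1,n}$ attached to the residual data become non-negative, so that $H^1_g=H^1$ for each of them, every residual extension class lifts to a de Rham one, and (since the cyclotomic--similitude relation forcing $H^2\ne0$ fails for generic $\nu$) the quadratic obstruction $\wt c\cup\theta\wt c$ vanishes automatically. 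Then a general $\bar\rho\in\wt C$ admits a de Rham lift into $\lsup LP_{1,n}$ of Hodge type $\lambda$; pushing it forward along $\lsup LP_{1,n}\hookrightarrow\lsup LG_n$ and using that $f$ is generically one-to-one on $\wt C$ (Corollary~\ref{cor:mns}), such $\bar\rho$ sweep out a dense subset of $C$, and shrinking away the other top-dimensional components and the bad loci lands us in the situation of the first paragraph. The statement for a general classical group $G$ follows by assembling the five families as in Theorem~\ref{thm:main2}.

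The main obstacle will be the $p$-adic Hodge theory bookkeeping in step (ii): one must verify that the freedom in the Hodge--Tate cocharacter of the auxiliary character is genuinely enough to simultaneously force $H^1_g=H^1$ for all the relevant extension modules, to annihilate the quadratic obstruction in the non-split ($z_2=1$) case, and to keep the combined Hodge type regular. A secondary point is that the output --- that each top-dimensional component carries a dense locus of de Rham points of regular Hodge type --- is a weak form of the $G$-valued de Rham lifting existence theorem developed in the companion paper; only this weak form, together with \cite{EG23} and \cite{BG19}, is needed here.
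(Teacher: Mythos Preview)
The paper's own proof is a single sentence: it cites the companion paper \cite{L25A} for a machinery that converts the component classification of Theorem~\ref{thm:main2} into the existence of de Rham lifts of regular Hodge type. Your proposal is not a different route so much as an attempt to unpack what that machinery does in this particular situation; indeed the Remark immediately following Corollary~\ref{cor:main3} in the paper writes down essentially the same inductive recipe for the Hodge type $\lambda$ that you describe (extend $\lambda'$ by a suitable value of $\alpha_1(\lambda)$), without claiming it constitutes a proof.

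You have correctly located the one genuine obstacle: your step~(ii), the $p$-adic Hodge theory verification that a sufficiently dominant choice of the auxiliary Hodge--Tate weight $\nu$ simultaneously forces $H^1_g=H^1$ on $V_{1,n}$ and $Z_{1,n}$, annihilates the characteristic-zero cup-product obstruction, and keeps $\lambda$ regular. You flag this yourself and note that it is precisely the ``weak form'' of the de Rham lifting theorem supplied by \cite{L25A}. So your argument and the paper's argument terminate at the same external input; neither is self-contained, and yours is simply more explicit about the shape of the reduction.

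Two small points worth tightening. First, the base cases $\lsup LU_2$, $\GSO_3$, $\GSO_4$ are not literally in \cite{EG23}, which treats only $\GL_n$; the isogeny reductions you invoke (e.g.\ $\GSO_3\cong\Gm\times\operatorname{PGL}_2$, $\GSO_4$ isogenous to a quotient of $\GL_2\times\GL_2$, $\lsup LU_2$ via Shapiro to conjugate-self-dual $\GL_2/K$) are correct in outline but you should check that de Rham lifts and regularity of Hodge type transfer across these isogenies compatibly with Corollary~\ref{cor:relative-dim}(2). Second, in the $\{\infty\}$ (i.e.\ $v_2=1$) branch the residual $\bar\alpha$ is fixed, but you are right that the crystalline lift $\wt\alpha$ still has infinitely many Hodge--Tate weights available, so the freedom in $\nu$ survives; just make sure the similitude compatibility $\wt\alpha\cdot\theta(\wt\alpha)=\lambda_{\wt\tau}$ does not over-constrain $\nu$ in the symplectic and orthogonal cases.
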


\begin{proof}
It follows immediately from
our companion paper \cite{L25A}
which constructs a machinery that
turn results in this paper
to the unconditional existence of de Rham lifts
of regular Hodge type.
\end{proof}

\begin{rmk}
For readers who don't want to appeal to \cite{L25A},
we can construct the de Rham stacks explicitly.
All de Rham stacks we use
will be of ``smallest possible inertial type'' $\tau_0$,
where $\tau_0:I_F \to \lsup LG$,
$\gamma\mapsto 1\rtimes \gamma$.
For ease of notation, we set $F=\bQ_p$
but the same construction works for arbitrary fields.
We construct the de Rham stacks inductively on $n$:
suppose
for each tuple
$$
\vec a=(\alpha_2^\vee(C), \dots, \alpha_{n-1}^\vee(C), \Remain(C))
$$
(where $C$ is a top-dimensional irreducible
component of
$\lsup LG_n$),
we have associated to it
a Hodge type $\lambda'$
so that the de Rham stack
$\cX_{\lsup LG_{n-2}}^{\lambda', \tau_0}$
covers the component $\vec a$.
If $\alpha_1^\vee(C)\neq \infty$,
we let $\lambda$ be the unique
cocharacter extending $\lambda'$
such that
$
\alpha_1(\lambda) = 
2p -\alpha_1^\vee(C)
$,
anf $\alpha_1(\lambda) = 1$ if otherwise
(it depends on the positivity convention
of Hodge types,
use $\alpha_1(\lambda) = 
\alpha_1^\vee(C)-2p-1$ or $\alpha_1^\vee(\lambda)=-1$
if the negative convention is used.)
\end{rmk}

\addcontentsline{toc}{section}{References}
\printbibliography
\end{document}